\theoremstyle{plain}
\newtheorem{proposition}{Proposition}[section]
\newtheorem{theorem}[proposition]{Theorem}
\newtheorem{corollary}[proposition]{Corollary}
\newtheorem{lemma}[proposition]{Lemma}
\newtheorem{definition}[proposition]{Definition}
\newtheorem{example}{Example}
\numberwithin{equation}{section}
\newcommand{\R}{\mathbb{R}}					% real numbers
\newcommand{\C}{\mathbb{C}}					% complex numbers
\newcommand{\N}{\mathbb{N}}					% positive integers
\newcommand{\eps}{{\varepsilon}}
\def\refer#1{~\ref{#1}}
\def\refeq#1{~(\ref{#1})}
\def\ccite#1{~\cite{#1}}
\def\suite#1#2#3{(#1_{#2})_{#2\in {#3}}}
\def\inte#1{
\displaystyle\mathop{#1\kern0pt}^\circ }
\let\e=\varepsilon
\let\lam=\lambda
\let\vf=\varphi
\let\D=\Delta
\let\S=\Sigma
\let\wh=\widehat
\def\cB{{\mathcal B}}
\def\cD{{\mathcal D}}
\def\cF{{\mathcal F}}
\def\cH{{\mathcal H}}
\def\cL{{\mathcal L}}
\def\cP{{\mathcal P}}
\def\cS{{\mathcal S}}
\def\cW{{\mathcal W}}
\def\cX{{\mathcal X}}
\def\S{{\mathop{\mathbb  S\kern 0pt}\nolimits}}
\def\virgp{\raise 2pt\hbox{,}}
\def\cdotpv{\raise 2pt\hbox{;}}
\def\C{\mathop{\mathbb C\kern 0pt}\nolimits}
\def\EE{\mathop{{\mathbb E \kern 0pt}}\nolimits}
\def\K{\mathop{\mathbb K\kern 0pt}\nolimits}
\def\Q{\mathop{\mathbb Q\kern 0pt}\nolimits}
\def\R{{\mathop{\mathbb R\kern 0pt}\nolimits}}
\def\SS{\mathop{\mathbb S\kern 0pt}\nolimits}
\def\ZZ{\mathop{\mathbb Z\kern 0pt}\nolimits}
\def\T{\mathop{\mathbb T\kern 0pt}\nolimits}
\def\P{\mathop{\mathbb P\kern 0pt}\nolimits}
\def \H{{\mathop {\mathbb H\kern 0pt}\nolimits}}
\newcommand{\ds}{\displaystyle}
\newcommand{\Z}{{\ZZ}}
\newcommand{\beq}{\begin{equation}}
\newcommand{\eeq}{\end{equation}}
\newcommand{\ben}{\begin{eqnarray}}
\newcommand{\een}{\end{eqnarray}}
\newcommand{\beno}{\begin{eqnarray*}}
\newcommand{\eeno}{\end{eqnarray*}}
\newcommand{\bqs}{\begin{equation*}}
\newcommand{\eqs}{\end{equation*}}
\newcommand{\andf}{\quad\hbox{and}\quad}
\newcommand{\with}{\quad\hbox{with}\quad}
\def\equivH#1 {\buildrel\hbox{\tiny {$#1$}}\over \equiv}
\def\simH#1 {\buildrel\hbox{\footnotesize {$#1$}}\over \sim}
\def \bone {\mathbbm{1}}
\def \bC {\mathbb C}
\def \bH {\mathbb H}
\def \bN {\mathbb N}
\def \bR {\mathbb R}
\def \bS {\mathbb S}
\def \bT {\mathbb T}
\def \bX {\mathbb X}
\def \bZ {\mathbb Z}
\def \cB {\mathcal B}
\def \cD {\mathcal D}
\def \cF {\mathcal F}
\def \cH {\mathcal H}
\def \cL {\mathcal L}
\def \cP {\mathcal P}
\def \cS {\mathcal S}
\def \cW {\mathcal W}
\def \cX {\mathcal X}
\def \fg {\mathfrak g}
\def \sL{\mathscr L}
\def \eps {\varepsilon}
\def \vol {{\rm vol}}
\title[Restriction problem]
{Some geometric and spectral aspects\\ of restriction problems
}    
\date{\today}
\author[H. Bahouri \and  V. Fischer]{Hajer Bahouri \and V\'eronique Fischer}
\address[H. Bahouri]
{CNRS  \&  Sorbonne Universit\'e  \\
 Laboratoire Jacques-Louis Lions (LJLL) UMR  7598 \\
4, Place Jussieu\\
75005 Paris, France.}
\email{hajer.bahouri@sorbonne-universite.fr}
\address[V. Fischer]
{Department of Mathematical Sciences, University of Bath, Bath, BA2 7AY, UK}
\email{v.c.m.fischer@bath.ac.uk }
\begin{document}
\setstcolor{red}

\maketitle

\vspace{-1em}
\begin{center}
    \textit{In memory of Ha\"im Brezis}
\end{center}

\setcounter{tocdepth}{2}

\begin{abstract}
This texts commemorates the memory of Ha\"im Brezis and  explores some aspects of the restriction problem, particularly its connections to spectral and geometric analysis. Our choice of subject is motivated by Brezis' significant contributions to various domains related to this problem, including harmonic analysis, partial differential equations, spectral theory, representation theory, number theory, and many others.
\end{abstract}

\makeatletter
\renewcommand\l@subsection{\@tocline{2}{0pt}{3pc}{5pc}{}}
\makeatother

\tableofcontents

\noindent {\sl Keywords:}  Restriction problems, Fourier analysis, spectral theory, subelliptic operators. 

\noindent {\sl AMS Subject Classification (2020):} 43A30, 43A80, 53C17, 30C40.

\section{Introduction}

 The original problem of Fourier restriction was first observed by E. Stein in the late~1960s,  as reported in  C. Fefferman's PhD thesis\ccite{Fefferman1}:
 the restriction of the Fourier transform of   functions $f\in L^p(\bR^n)$ to the unit sphere $\bS^{n-1}\subset\bR^n $ makes sense 
  when $p$ is close enough to~$1$ and $n>1$.
  We will recall the precise statement in  Theorem \ref{thm_1rest} and sketch its original proof in Section \ref{sec_sphere}.
  
  \smallskip
  
  Soon after this observation, E. Stein and P. Tomas in \cite{stein, Tomas} found the optimal range of indices $p$ for this restriction, and the sharpness of this result was proved with a counterexample due to A. Knapp, see \cite{strichartz}. In the meantime, A. Zygmund \cite{zygmund}  proved a discrete analogue restriction property in the context of the two-dimensional flat torus. 

 \smallskip

Over the subsequent decades, restriction problems have experienced an incredible effervescence and excitement. They remain a topical issue to this day. Generalised to other hyper-surfaces in Euclidean spaces and other settings, they are closely related to problems in Harmonic Analysis and Partial Differential Equations (PDEs), such as the Bochner-Riesz means, Strichartz estimates, and  the Kakeya conjecture   proved by R. Davies\ccite{Davies} in the two dimensional case in 1971, and   recently solved by  H. Wang and J. Zahl \cite{WZ} in the three-dimensional case. 
They are also related to many questions at the interface of Harmonic Analysis and  Number Theory; an instance of this is the Vinogradov conjecture (recalled in Section \ref{Vinogradov}), now a theorem  proved in 2015 independently by Bourgain-Demeter-Guth\ccite{BDG} and T. Wooley\ccite{Woo1, Woo}, using decoupling theory  for the former and for the latter the efficient congruencing. 
  For an in-depth presentation of these topics, we refer to the two milestone surveys on the subject by T. Tao \cite{Tao} and   L. Guth \cite{Guth},  as well as the texts of     C. Demeter\ccite{DemeterICM, DemeterBourgain} about decoupling methods and Bourgain's work in Fourier restriction.

 \smallskip

Restriction problems are also connected to many phenomena in Spectral Theory, and our aim is to explore geometric and spectral aspects of them. 
In Section \ref{sec_spectral}, we present the well-known interpretation of the original restriction problem on the sphere as a spectral property of the Laplace operator~$\Delta_{\bR^n}$ on~$\bR^n$. 
Replacing~$-\Delta_{\bR^n}$ with other positive operators allows us to formulate the problem of Fourier restriction  into a spectral problem that makes sense in other settings. 
In contrast with T. Tao \cite{Tao} and L. Guth \cite{Guth}'s surveys,
this text emphasises  this reformulation which  is especially relevant in  contexts where the Euclidean Fourier transform is unavailable. These settings 
include the  Riemannian manifolds with the Laplace-Beltrami operators,
but more generally any setting with a Laplace-type operator, such as 
graphs with the graph Laplacian or manifolds equipped with positive sub-elliptic operators. 
We will naturally recall the connection  
with cluster estimates  
understood as 
 $L^p-L^q$-bounds  for the spectral projectors of  a Laplace-type operator (or  its square root) in a window $[\lambda,\lambda+1]$ with  $\lambda\gg 1$ large  (see Section~\ref{subsec_cluster}).
The particular case of the Laplace-Beltrami operators on compact Riemannian manifolds has been  extensively  studied by C. Sogge\ccite{Sogge1988T, Sogge1988, Sogge2017}.

\smallskip  

A perhaps less understood question is to study cluster estimates or spectral restriction problems for sub-elliptic operators, for instance, sub-Laplacians, that is, the sum of squares of vector fields satisfying the H\"ormander condition\ccite{hormander4}. 
Exploring Stein's problem in sub-elliptic frameworks is  motivated by the wide range of ramifications of this field in
several parts of mathematics and  applied areas, such as    crystallography,  
  particle physics,    optimal
control, image processing, etc.\ccite{LeDonne}.
The prototype of sub-Laplacians is the canonical ones on  the Heisenberg groups; 
these groups  can be defined via the canonical commutation relations, known as CCR in quantum mechanics, and are at the confluence of many mathematical and physical fields.
Restriction properties for 
the  canonical sub-Laplacians on the Heisenberg groups were studied by D. M\"uller\ccite{Muller}. 
In this setting, there are no non-trivial solutions in the $L^p$-spaces for $p>1$, leading D. M\"uller   to reformulate the problem in anisotropic spaces, see Section\refer{sec_Liegroups}.

\medskip

This text is organised  as follows. 
In Section~\ref{sec_sphere}, we discuss the  first $L^2$-restriction theorem which was set on the unit sphere $\bS^{n-1}$ of $\bR^n$. 
We emphasise the crucial role of the Fourier transform of the canonical measure of $\bS^{n-1}$ and its decay at infinity.
In Section~\ref{sec_TS}, we present the extension of these questions to the setting of hyper-surfaces with non-vanishing Gaussian curvature in $\bR^n$
with optimal range of indices and exponents. 
This is known as the famous Tomas-Stein Theorem.
Again, the proofs rely fundamentally on the decay of 
 the Fourier transform of the induced measures on the hyper-surfaces with non-vanishing curvature. Indeed, this decay is the same as for the unit sphere $\bS^{n-1}$.
In Section~\ref{sec_spectral}, we reformulate the restriction estimates on the sphere in spectral terms. 
We show that they are equivalent to cluster estimates.
In Section~\ref{sec_Liegroups}, we discuss restriction and cluster estimates on Lie groups  and homogeneous domains, where few cases have been studied with  surprising results, such as M\"uller's results for the canonical sub-Laplacian on the Heisenberg group. 
Section~\ref{sec_open} is devoted to applications of restriction results and open problems. 
 The first part of this section will be mainly concerned with Strichartz estimates which have become a powerful tool in the study of nonlinear evolution equations involved in physics, quantum mechanics and general relativity. 
In the second part of Section~\ref{sec_open}, we briefly present restriction problems in  the  challenging  discrete framework, and its connections with  Number Theory.
Finally, in Appendix~\ref{sec_TT*}, we recall the $TT^*$ argument.

\medskip  We conclude this introduction with a comment on notation. In this paper, the letter $C$ will be used to denote  universal constants
which may vary from line to line. If we need the implied constant to depend on parameters, we shall indicate this by subscripts. We  also use the notation $A\lesssim B$   to
denote   bound of the form $A\leq C B$,   and $A \lesssim_\alpha B$ 
for $A\leq C_\alpha B$, where $C_\alpha$ depends only  on $\alpha$.

\bigbreak\noindent{\bf Acknowledgments.}
  The authors wish to thank    Julien Sabin   for    enlightening discussions  about  the refined restriction estimates. 
 
\section{The first $L^2$-restriction theorem: on the sphere}
\label{sec_sphere}

Historically, the first $L^2$-restriction problem was set on  the unit sphere
$$
\bS^{n-1} =\left \{x=(x_1,\ldots, x_n)\in \bR^n \colon |x|^2 = x_1^2+\ldots + x_n^2=1\right \},
$$ 
equipped with its canonical measure $\sigma_{\bS^{n-1}}$.

\subsection{Statement}
\label{subsec_statement+proof_sphere}

As the Fourier transform of a Schwartz function is Schwartz, its restriction to the unit sphere makes sense.
The first restriction theorem (Theorem \ref{thm_1rest} below) states that the Fourier transform  of a function in $L^p(\bR^n)$ restricts to an $L^2$-function on the unit sphere for~$p\in [1,4n /(3n+1))$.
To keep the notation consistent with the case of  more general settings, in what follows, we will distinguish $\bR^{n}$ and its dual $\wh \bR^{n}$, which is of course isomorphic to~$\R^{n}$ itself, 
and write~$\wh\bS^{n-1}$ for the unit sphere of $\wh \bR^{n}$.
By a classical density argument, it suffices to establish the 
\textit{a priori} estimate
\begin{equation}
\label{eq:estimesphere}
 \|\cF( f)|_{\wh\bS^{n-1}}\|_{L^2 (\sigma_{\wh\bS^{n-1}})}\leq C \|f\|_{L^p (\R^n)}\, ,
\end{equation}
 for all $f$ in  the Schwartz space $\cS(\R^n)$ and for a constant $C>0$ independent of $f$.
Above,~$\cF f$ denotes the Fourier transform of  $f$ on $\R^n$:
$$
\cF f (\xi ) = \widehat f(\xi)=\int_{\bR^n} e^{- 2\pi ix\cdot \xi} f(x) dx, \qquad \xi\in \wh\bR^n.
$$
Indeed, \eqref{eq:estimesphere} implies that the linear map $f\mapsto \cF f|_{\wh\bS^{n-1}}$ defined on $\cS(\bR^n)$ extends uniquely continuously $L^p(\bR^n)\to L^2(\wh\bS^{n-1})$.

 \begin{theorem}[First restriction theorem]
 \label{thm_1rest}
Let $n\geq 2$ and $p\in [1,4n /(3n+1))$.
The estimates in \eqref{eq:estimesphere} hold for any $f\in \cS(\bR^n)$. 
 \end{theorem}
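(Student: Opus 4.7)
The natural strategy is to use the $TT^*$ argument (recalled in Appendix~\ref{sec_TT*}) to convert the bilinear restriction estimate~\eqref{eq:estimesphere} into a convolution estimate, whose range of exponents will be dictated by the decay of~$\cF^{-1}(\sigma_{\wh\bS^{n-1}})$ at infinity.

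First, I would introduce the restriction operator $Rf = \cF f|_{\wh\bS^{n-1}}$ defined on $\cS(\R^n)$, and identify its formal adjoint as the extension operator $R^*g = \cF^{-1}(g\,\sigma_{\wh\bS^{n-1}})$. The $TT^*$ principle says that $R: L^p(\R^n)\to L^2(\wh\bS^{n-1})$ is bounded if and only if the composition
\beno
R^*R f \;=\; f*K, \qquad K := \cF^{-1}(\sigma_{\wh\bS^{n-1}}),
\eeno
is bounded $L^p(\R^n)\to L^{p'}(\R^n)$ (with equality of operator norms, squared). So the whole problem reduces to controlling convolution by~$K$.

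The next step, which is the technical heart of the argument, is to establish the decay estimate
\beno
|K(x)| \;\lesssim\; (1+|x|)^{-(n-1)/2}, \qquad x\in\R^n.
\eeno
One parametrises the sphere by two hemispherical graphs and writes $K(x)$ as an oscillatory integral with phase $x\cdot\omega$ on each hemisphere; for $|x|\gg 1$, the phase has exactly two non-degenerate critical points (the north and south poles relative to the direction $x/|x|$, at which the Gaussian curvature of $\bS^{n-1}$ is non-zero), so the stationary phase method yields the claimed $(n-1)/2$ decay rate. Equivalently, one can identify $K$ with an explicit multiple of a Bessel function $J_{(n-2)/2}(2\pi|x|)|x|^{-(n-2)/2}$ and invoke the classical asymptotics. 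The bound for small $|x|$ follows from $K\in L^\infty$, since $\sigma_{\wh\bS^{n-1}}$ is a finite measure.

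With the decay in hand, $K\in L^r(\R^n)$ precisely when $r(n-1)/2>n$, that is, for every $r>2n/(n-1)$. I would finish by Young's convolution inequality: the map $f\mapsto f*K$ sends $L^p$ to $L^{p'}$ whenever
\beno
\frac{1}{p}+\frac{1}{r}=1+\frac{1}{p'}, \qquad \text{i.e.} \qquad \frac{1}{r}=2-\frac{2}{p}.
\eeno
Imposing the admissibility condition $r>2n/(n-1)$ translates into $2-2/p<(n-1)/(2n)$, which is exactly $p<4n/(3n+1)$. The constraint $p\ge 1$ is automatic, completing the range $p\in[1,4n/(3n+1))$ claimed in Theorem~\ref{thm_1rest}.

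The main obstacle is the stationary phase estimate on~$K$: the decay exponent $(n-1)/2$ is sharp, it crucially uses that the sphere has everywhere non-vanishing Gaussian curvature, and it is this precise exponent that, combined with Young's inequality, produces the endpoint $4n/(3n+1)$. Everything else in the argument is essentially bookkeeping.
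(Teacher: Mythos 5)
Your proposal is correct and follows essentially the same route as the paper: reduce to the convolution operator with kernel $\cF^{-1}\sigma_{\wh\bS^{n-1}}$ (the paper does this via the identity $\|\cF f\|_{L^2(\sigma_{\wh\bS^{n-1}})}^2=\int f\,\overline{f*\kappa}$ plus H\"older, and then remarks that this is precisely the $TT^*$ argument you invoke), establish the decay $|\kappa(x)|\lesssim(1+|x|)^{-(n-1)/2}$ (the paper quotes the explicit Bessel formula, you offer stationary phase with Bessel asymptotics as the equivalent alternative), and conclude by Young's inequality with the same exponent bookkeeping yielding $p<4n/(3n+1)$.
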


Theorem \ref{thm_1rest} is far from being optimal.
For instance, a modification of the  argument below would allow us to  reach the end-point $p=4n / (3n+1)$ using Hardy-Littlewood-Sobolev inequalities.
However, the optimal end-point ($p_{TS}:=(2n+2)/(n+3)$) was proved later on with the Tomas-Stein Theorem\ccite{Tomas} (see Theorems \ref{thm_Tomas}
 and \ref{t:tsbase} below).

\subsection{Proof of Theorem \ref{thm_1rest}}
\label{subsec_pfthm_1rest}

We reproduce here the well-known argument due to E. Stein and C. Fefferman\ccite{stein, Fefferman1, Fefferman}.

\begin{proof}[Proof of Theorem \ref{thm_1rest}]
The proof relies on noticing the following equality valid for any~$f\in \cS(\bR^n)$
$$
\|\cF f\|_{L^2(\sigma_{\wh \bS^{n-1}})}^2 
= \int_{\bR^n}
f(x) \ \overline {f*\kappa (x)}\ dx,
\quad\mbox{where}\quad \kappa:= \cF^{-1}\sigma_{\wh \bS^{n-1}}.
$$
By Stein-Weiss\ccite{steinweiss} Section IV.3 (see also the monograph of Gel'fand-Shilov\ccite{GS}),  the convolution kernel $\kappa$ is explicitly 
given in terms of the Bessel function $J_\alpha$  of the first kind and of order $\alpha = (n-2)/2$ as
\begin{equation}
	\label{eq_kappa}
 \kappa(x) = \int_{\wh \bR^n} e^{2\pi ix\cdot  \xi} d\sigma_{\wh \bS^{n-1}}(\xi) = 
2\pi \frac{J_{(n-2)/2}(2\pi |x|)}{|x|^{(n-2)/2}}.
\end{equation}
It is well known that, for any $\alpha>-1$, 
the function  
$\frac{J_\alpha(z)}{(z/2)^{\alpha}}$ coincides on $\bR$ with a power series in $z^2$,  more precisely with $ {}_0F_1(\alpha+1,-z^2/4) / \Gamma(\alpha+1)$ using generalised hyper-geometric series and the Gamma function. Moreover, it satisfies the asymptotic bound 
$$
\exists C>0 \quad  \slash \quad \forall z\in \bR, \qquad 
|J_\alpha(z)|\leq C (1+|z|)^{-1/2}.
$$
Hence, $\kappa$ is a smooth function on $\bR^n$ satisfying 
\begin{equation}
\label{eq_estkappa}
	|\kappa(x)| \lesssim (1+|x|)^{-(n-1)/2},
\end{equation}
so that $\kappa\in L^q(\bR^n)$ if and only if $q>2n/(n-1)$.
By H\"older's inequality and then Young's convolution inequality, 
we have 
$$
\|\cF f\|_{L^2(\sigma_{\wh\bS^{n-1}})}^2 
\leq \|f\|_{L^p(\bR^n)} \|f*k\|_{L^{p'}(\bR^n)} 
\leq \|f\|_{L^p(\bR^n)}^2 \|k\|_{L^{q}(\bR^n)},
\quad \frac 1p =1-\frac 1{2q}.
$$
The conclusion follows. 
\end{proof}

\medbreak

The crucial step in the proof above is to show that the operator $\cP$ defined by
\begin{equation}
\label{eq_opP}
	\cP f = f*\kappa, \quad\mbox{where}\quad \kappa = \cF^{-1}\sigma_{\wh\bS^{n-1}},
\end{equation}
or equivalently via 
$$
\widehat {\cP f} = \cF f \ d\sigma_{\wh\bS^{n-1}}, 
$$
is bounded $L^p(\bR^n)\to L^{p'}(\bR^n)$ whenever $p\in [1,4n /(3n+1))$.
This is in fact an early instance of the $TT^*$ argument  (see Appendix~\ref{sec_TT*}) as 
we may write 
$$
\cP = TT^*, \quad\mbox{i.e.}\quad \forall f\in \cS(\bR),\qquad
\cP f = f * \kappa = TT^*f ,
$$
where $T^*$ is the actual restriction operator $f\mapsto \cF f|_{\wh \bS^{n-1}}$ whose boundedness $L^p\to L^2$ we wish to prove, and $T$ its formal dual given by $Tg = \cF^{-1} (g d\sigma_{\wh \bS^{n-1}})$. 
By the $TT^*$ argument, the boundedness of one of the following operators $$
\cP: L^p(\bR^n)\to L^{p'}(\bR^n), 
\qquad 
T\colon L^2(\wh \bS^{n-1}) \to L^{p'}(\bR^n), 
\qquad 
T^*\colon L^{p}(\bR^n) \to L^2(\wh \bS^{n-1})
$$
implies the boundedness of the others.
These boundedness properties may be expressed equivalently with the following {\it a priori} estimates valid for $f\in \cS(\bR^n)$ and $g\in C^\infty (\wh \bS^{n-1})$: 
  \begin{align*}
 \|\cP f\|_{L^{p'}(\bR^n)}&\leq C_p \|f\|_{L^{p}(\bR^n)},\\
 \|Tg\|_{L^{p'}(\bR^n)}= \|\cF^{-1} (g d\sigma_{\wh \bS^{n-1}})\|_{L^{p'}(\bR^n)}&\leq \sqrt C_p \|g\|_{L^2(\wh \bS^{n-1})},
\\
\|T^*f\|_{L^2(\wh \bS^{n-1})}= 
\|\cF f|_{\wh \bS^{n-1}}\|_{L^2(\wh \bS^{n-1})}
&\leq \sqrt C_p \|f\|_{L^{p}(\bR^n)}.
\end{align*}

\subsection{Tomas' improvement}
\label{subsec_Tomas}

In \cite{Tomas}, P. Tomas improved the range of $p$ for which the estimates in~\eqref{eq:estimesphere} hold:

\begin{theorem}[Tomas]
 \label{thm_Tomas}
Let $n\geq 2$ and $p\in [1,p_{TS})$ with 
$$
p_{TS}:=\frac{2n+2}{n+3}.
$$
The estimates in \eqref{eq:estimesphere} hold for any $f\in \cS(\bR^n)$. 
 \end{theorem}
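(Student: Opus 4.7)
The plan is to follow the $TT^*$ approach from Section~\ref{subsec_pfthm_1rest} but to replace the single application of Young's convolution inequality by a dyadic decomposition of~$\kappa$ in physical space, combined with a Plancherel-based $L^2$ bound. This is the classical Tomas strategy. By the $TT^*$ argument, it suffices to show that $\cP f=f*\kappa$ is bounded $L^p(\bR^n)\to L^{p'}(\bR^n)$ for $p\in[1,p_{TS})$.

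First I would fix a smooth radial partition of unity $1=\sum_{j\geq 0}\phi_j$ on $\bR^n$, with $\phi_0$ supported near the origin and, for $j\geq 1$, $\phi_j$ supported in the annulus $|x|\sim 2^j$, and decompose accordingly
$$
\kappa=\sum_{j\geq 0}\kappa_j,\qquad \kappa_j:=\kappa\,\phi_j,\qquad T_jf:=f*\kappa_j.
$$
Since $\sigma_{\wh\bS^{n-1}}$ has compact support, $\kappa$ is smooth on~$\bR^n$, so the low-frequency piece $T_0$ is harmless. For $j\geq 1$ I seek two endpoint bounds: Young's convolution inequality together with the decay estimate~\eqref{eq_estkappa} gives
$$
\|T_jf\|_{L^\infty(\bR^n)}\lesssim 2^{-j(n-1)/2}\|f\|_{L^1(\bR^n)},
$$
while Plancherel reduces the $L^2\to L^2$ bound for $T_j$ to
$$
\|\cF\kappa_j\|_{L^\infty(\wh\bR^n)}\lesssim 2^j.
$$

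This last bound is the step I expect to be the main obstacle, and it is where the geometry of $\wh\bS^{n-1}$ genuinely enters. Writing $\cF\kappa_j=\cF\phi_j*\sigma_{\wh\bS^{n-1}}$, one notes that $\cF\phi_j$ is an $L^1$-normalised bump concentrated at scale~$2^{-j}$ in the frequency variable, so heuristically $|\cF\kappa_j(\xi)|\approx 2^{jn}\,\sigma_{\wh\bS^{n-1}}(B(\xi,2^{-j}))$. For $\xi$ within distance $2^{-j}$ of the unit sphere, this intersection is an $(n-1)$-dimensional spherical cap of surface measure of order $2^{-j(n-1)}$, producing the bound~$2^j$; for $\xi$ at distance $\gg 2^{-j}$ from~$\wh\bS^{n-1}$, the Schwartz decay of $\cF\phi_j$ contributes rapid decay, which completes the estimate.

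Finally, Riesz-Thorin interpolation between the two endpoint bounds yields, for $\theta:=2/p-1\in(0,1)$,
$$
\|T_j\|_{L^p(\bR^n)\to L^{p'}(\bR^n)}\lesssim 2^{j\bigl(1-\theta(n+1)/2\bigr)},
$$
and summing over $j\geq 0$ converges precisely when $\theta>2/(n+1)$, equivalently $p<2(n+1)/(n+3)=p_{TS}$. This gives the boundedness of $\cP=\sum_j T_j$ in the claimed range and hence, via the $TT^*$ argument, the restriction estimate~\eqref{eq:estimesphere}.
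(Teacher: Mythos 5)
Your proposal is correct and follows essentially the same route as the paper's sketch of Tomas' argument: a dyadic decomposition of $\kappa$ in physical space, an $L^1\to L^\infty$ bound from the decay estimate \eqref{eq_estkappa}, an $L^2\to L^2$ bound via Plancherel using the cap-measure estimate \eqref{eq_bSn-1dim}, and interpolation plus summation over the dyadic pieces, which converges exactly for $p<p_{TS}$. The numerology in your interpolation and summation step is right, so no gap to report.
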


\begin{proof}[Sketch of the proof of Theorem \ref{thm_Tomas}
 for $p\in [1,p_{TS})$]
The ideas in P. Tomas' proof \cite{Tomas}  rely on a dyadic decomposition and interpolating between the boundedness $L^1\to L^\infty$  and $L^2\to L^2$  for each dyadic piece. 
These two types of boundedness are proved using (respectively) the estimates in \eqref{eq_estkappa} for $\kappa = \cF^{-1}\sigma_{\wh \bS^{n-1}}$ and 
\begin{equation}
\label{eq_bSn-1dim}
\sigma_{\wh \bS^{n-1}}(\wh \bS^{n-1} \cap B(\xi_0, r)) = \cH^{n-1} (\wh \bS^{n-1} \cap B(\xi_0, r)) \sim r^{n-1};
\end{equation}
above, $r>0$ is small enough, $\xi_0\in \wh \bS^{n-1}$, $B(\xi_0,r)$ denotes the Euclidean ball about~$\xi_0$ with radius~$r>0$  and $\cH^{n-1}$ the Hausdorff measure of dimension $n-1$.
 For more details, see Section 7 in\ccite{Wolff}.
\end{proof}

\medbreak

In \cite{stein}, E. Stein shows that   the index $p_{TS}$ is achieved, using complex interpolation methods.

\smallskip

Let us also emphasise that a counterexample attributed to Knapp (see\ccite{strichartz},  Lemma~3)  shows that 
the estimates in \eqref{eq:estimesphere} do not hold for any $p>p_{TS}$.
It is given as follows: consider  $g_{\delta}$ ($\delta>0$ very small) the characteristic function of a spherical cap 
$$
\wh C_{\delta}:=\{x\in \wh\bS^{n-1}\colon |x\cdot e_{n}|<\delta \}\,.$$
One can prove that, as~$\delta \to 0$,  
$$
\|g_{\delta}\|_{L^{2}(\sigma_{\wh \bS^{n-1}})}\sim  \delta^{(n-1)/2},\qquad \| \cF^{-1}(g_{\delta}) \|_{L^{p'}( \R^{n})}\geq C \delta^{n-1}\delta^{-(n+1)/p'}\,,
$$ 
hence the estimate for some constant $C>0$
$$
\| \cF^{-1}(g_{\delta}) \|_{L^{p'}( \R^{n})}\leq C \|g_{\delta}\|_{L^{2}(\sigma_{\wh \bS^{n-1}})}\, ,
$$
can hold only if $p'\geq (2n+2)/(n-1)$, that is to say  if $p\leq p_{TS}$. 
By the $TT^*$ argument, the estimates in \eqref{eq:estimesphere} can hold only for $p\leq p_{TS}$.

\section{Tomas-Stein Restriction theorem on hyper-surfaces}
\label{sec_TS}
Given a hyper-surface~$\wh S \subset \wh\R^n$ endowed with a smooth measure $\sigma$, the restriction problem, that  has been  introduced by E.-M. Stein in the seventies, asks for which pairs~$(p,q)$ an inequality of the form
\begin{equation}
\label{eq:estimepq}
\exists C>0 \quad \slash \quad \forall f\in \cS(\bR^n),\qquad
 \|\cF( f)|_{\wh S}\|_{L^q (\wh S, \sigma)}\leq C \|f\|_{L^p (\R^n)}\, ,
\end{equation}
holds. The classical Tomas-Stein theorem focuses on the case $q=2$. 
The measure $\sigma$ is any measure having a smooth and compactly supported density with respect to the induced measure $\sigma_{\wh S}$ on $\wh S$. 
Before stating the famous result, 
let us recall the definition of $\sigma_{\wh S}$ and its fundamental properties used in the proof of restriction theorems. 

\subsection{The induced measure on a hyper-surface}
\label{subsec_inducedmeas}

\subsubsection{Definition}
\label{subsubsec_definducedmeas}
We consider a hyper-surface~$S$ of $\R^n$ with $n\geq 2$.
Equipping  $\bR^n$ with its Euclidean structure, 
$S$ inherits a Riemannian structure, the metric being  obtained by restricting the Euclidean metric to the tangent space of $S$. By definition, the induced measure $\sigma_{S}$ is the volume measure on $S$ associated with this Riemannian metric.
This generalises the notion of `surface area' measures for surfaces in $\bR^3$. 

\begin{example}
Naturally, in the case of the unit sphere $\bS^{n-1}$, the induced measure $\sigma_{\bS^{n-1}}$ coincides with its canonical measure.   
\end{example}

\begin{example}
\label{ex_Sgraph}
	If the hyper-surface is given by the graph of a smooth function $\phi:\bR^{n-1}\to \bR$
	$$
	S = \left \{(x',x_n)\in \bR^{n-1}\times \bR \sim \bR^n \colon x_n =\phi(x')\right \},
	$$
	then the induced measure is given by:
	$$
	\int_S f d\sigma_{ S} =\int_{\bR^{n-1}} f(x',\phi(x')) \sqrt{1+|\nabla_{x'} \phi (x')|^2 } dx',
	$$
	for any  function $f:\bR^n\to \bC$ in the space~$C_c(\bR^n)$ of continuous functions with compact support. 
\end{example}
As a hyper-surface may be described locally as in Example \ref{ex_Sgraph}, it follows that the estimates in \eqref{eq_bSn-1dim} set in the context of the unit sphere generalise into
\begin{equation}
\label{eq_Sn-1dim}
\sigma_S(S \cap B(x_0, r))  \sim r^{n-1},
\end{equation}
for any $x_0\in S$ and $r>0$ small enough. 

\smallskip

For the subsequent applications of the restriction problems to  PDE's (see Section \ref{sec_open}), the most relevant examples are the following particular cases of Example \ref{ex_Sgraph}:
\begin{example}
\label{ex_parcone}
For the parabola
$$
S_{par}:= \left \{(x',x_n)\in \bR^{n-1}\times \bR \sim \bR^n \colon x_n =|x'|^2\right \}, 
$$
the induced measure is given by 
		$$
		\forall f\in C_c(\bR^n),\qquad 
	\int_{S_{par}} f d\sigma_{ S_{par}} =
	\int_{\bR^{n-1}} f(x',|x'|^2) \sqrt{1+|x'|^2 } dx'.
	$$
Although the cone, 
$$
S_{cone}:= \left \{(x',x_n)\in \bR^{n-1}\times \bR \sim \bR^n \colon x_n = |x'|\right \} 
$$
has a singularity at $0$, it is possible to define its  induced measure as being given by 
	$$
		\forall f\in C_c(\bR^n),\qquad 
	\int_{S_{cone}} f d\sigma_{ S_{cone}} =\sqrt{2 }
	\int_{\bR^{n-1}} f(x',|x'|)  dx'.
	$$
\end{example}

\subsubsection{Estimates for the Fourier transform of the induced measure}
A fundamental result in the analysis of restriction problems  is the following:
\begin{theorem}[\cite{stein}] \label{thm_estwhsigma} 
 Let $S$ be a smooth  hyper-surface in $\R^{n}$ with $n\geq 2$.
 We denote by~$\sigma_{S}$  the induced measure on $S$. 
 We fix  $\psi\in C_c^\infty (S)$ and consider the measure given by 
 $$
 d\sigma(x):= \psi(x) \, d\sigma_{S}(x).
 $$
 The Fourier transform of $\sigma$ is a smooth function $\wh \sigma\in C^\infty(\bR^n)$. Moreover, 
 if the Gaussian curvature of $S$ does not vanish at every point, then it  satisfies the following estimate:
 $$
 \exists C>0\quad \slash \quad  \forall \xi\in \bR^n, \qquad
 |\wh \sigma (\xi) |\leq C (1+|\xi|)^{-\frac{n-1}2}.
 $$
\end{theorem}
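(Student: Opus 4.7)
The plan is to combine a localization argument with a stationary phase analysis. The smoothness of $\wh\sigma$ is immediate from the fact that $\sigma$ is a compactly supported distribution (by Paley-Wiener, $\wh\sigma$ even extends to an entire function). For the decay bound, the estimate is trivial when $|\xi|$ is bounded, so I focus on $|\xi|$ large.

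\smallskip

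\emph{Step 1 (Localization).} Using a smooth partition of unity on $\supp(\psi)$, I would reduce to the case where $\psi$ is supported in a small neighborhood of a single point $x_0\in S$. Up to a rotation of $\bR^n$, one may assume that $S$ is locally the graph $x_n=\phi(x')$ of a smooth function $\phi$ on a small open set $U\subset \bR^{n-1}$, with $\nabla\phi(x_0')=0$ (so that the unit normal at $x_0$ is $\pm e_n$) and the Hessian $\mathrm{Hess}\,\phi(x_0')$ non-degenerate---this last condition being equivalent, at $x_0$, to the non-vanishing of the Gaussian curvature. By Example \ref{ex_Sgraph},
\[
\wh\sigma(\xi) = \int_U e^{-2\pi i(x'\cdot\xi'+\phi(x')\xi_n)}\, a(x')\, dx',
\]
with $a\in C_c^\infty(U)$.

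\smallskip

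\emph{Step 2 (Splitting by direction).} Write $\xi=|\xi|\omega$ with $\omega=(\omega',\omega_n)\in \bS^{n-1}$, and fix a small constant $c_0>0$. If $|\omega_n|\leq c_0$, then $|\omega'|\geq 1/2$, and after shrinking $U$ so that $|\omega_n\nabla\phi(x')|\leq |\omega'|/2$ on $\supp(a)$, the phase $\Phi(x')=x'\cdot\omega'+\phi(x')\omega_n$ has gradient bounded below by a positive constant uniformly in $\omega$. Repeated integration by parts with the operator $L=(2\pi i|\xi||\nabla\Phi|^2)^{-1}\nabla\Phi\cdot\nabla_{x'}$, whose formal adjoint preserves the oscillating exponential, yields arbitrary polynomial decay $|\wh\sigma(\xi)|\lesssim_N |\xi|^{-N}$. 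If instead $|\omega_n|>c_0$, then critical points of $\Phi$ are characterised by $\nabla\phi(x')=-\omega'/\omega_n$. By the inverse function theorem applied to $\nabla\phi$ near $x_0'$ (which relies on the non-degeneracy of $\mathrm{Hess}\,\phi$), there is at most one critical point $x_c'$ in $U$, present only for $\omega'/\omega_n$ small. The Hessian of $\Phi$ at $x_c'$ equals $\omega_n\,\mathrm{Hess}\,\phi(x_c')$, whose determinant has absolute value $\geq c_0^{n-1}|\det \mathrm{Hess}\,\phi(x_c')|$, bounded below uniformly in $\omega$. The classical stationary phase lemma then yields
\[
|\wh\sigma(\xi)|\leq C|\xi|^{-(n-1)/2},
\]
with a constant independent of $\omega$.

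\smallskip

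\emph{Step 3 (Conclusion).} Combining the two regimes with the trivial bound for $|\xi|\leq 1$ produces the desired estimate $|\wh\sigma(\xi)|\leq C(1+|\xi|)^{-(n-1)/2}$.

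\smallskip

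The main obstacle I anticipate is ensuring that the constants in the stationary phase estimate are truly \emph{uniform} in $\omega$: one needs both that the critical point $x_c'(\omega)$ depends smoothly on $\omega$ in the regime where it exists, and that the Hessian determinant remains bounded away from zero throughout the relevant range of $\omega$. Both facts rest squarely on the non-vanishing Gaussian curvature hypothesis, which is the geometric content powering the decay exponent $(n-1)/2$.
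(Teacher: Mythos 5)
Your proposal follows essentially the same route as the paper's own sketch: smoothness from compact support of $\sigma$, localization to a graph chart after a rotation, rapid decay by non-stationary phase for directions away from the normal, and the $|\xi|^{-(n-1)/2}$ bound by stationary phase near the normal direction, where the non-vanishing Gaussian curvature gives the non-degenerate Hessian. Your added attention to uniformity in the direction $\omega$ (smooth dependence of the critical point, uniform lower bound on the Hessian determinant) is a correct and welcome refinement of the same argument, not a different approach.
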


In the case of $S=\bS^{n-1}$ and $\psi=1$, we recognise the estimate in \eqref{eq_estkappa} used in the proof of the first $L^2$-restriction theorem (Theorem \ref{thm_1rest}).

\begin{proof}[Sketch of the proof of Theorem \ref{thm_estwhsigma}]
As $\sigma$ is a measure with compact support, its Fourier transform $\wh \sigma$ is a smooth function on $\bR^n$. 
The estimate is obtained by applications of stationary phase methods that we now sketch. 

We may assume that the support of $\psi$ is included in a chart for $S$. Moreover, after an orthogonal change of coordinates in $\bR^n$, we may assume that on the support of $\psi$,  $S$ is described by the graph of a function as in Example \ref{ex_Sgraph}. 
We then obtain
$$
	\widehat \sigma(\xi) =\int_{\bR^{n-1}} e^{-i \phi_\xi (x')} \psi_1(x') dx',
	\quad\mbox{where}\ 
\psi_1(x'):= \psi(x',\phi(x') ) \sqrt{1+|\nabla\phi|^2}(x'), 
$$
with phase  given by 
$$
\phi_\xi  (x') = (x',\phi(x'))\cdot  \xi = x_1\xi_1+\ldots +x_{n-1}\xi_{n-1} + \phi(x')\xi_n.
$$
We write $\xi = t \tilde \xi$ with $t\geq 0$ and $\tilde \xi\in \bS^{n-1}$, and consider $t$ large. 
If the direction $\tilde\xi $ is 
away from the South and North poles $(0,\ldots, 0, \pm 1)$, then 
the non-stationary phase yields readily
$|\widehat \sigma(\xi)|\lesssim_N t^{-N}=|\xi|^{-N}$,  for any $N\in \bN.$
If $\tilde \xi$ is near the South and North poles, then, since the Gaussian curvature of $S$ does not vanish, we may apply the  stationary phase to obtain $|\widehat \sigma(\xi)|\lesssim t^{-(n-1)/2} =  |\xi|^{-(n-1)/2}$.
\end{proof}

\medbreak

\subsubsection{Measure theoretic description}
\label{subsubsec_measT}
The definition in Section \ref{subsubsec_definducedmeas} from differential geometry may also be described in terms of objects in measure theory,  see the monographs of H. Federer\ccite{Federer} or P.  Mattila\ccite{Mattila}.
Indeed, 
the induced measure $\sigma_{S}$ coincides with the restriction to $S$ of the Hausdorff measure $\cH^{n-1}$ of dimension $n-1$:
$$
\sigma_{S} (A)= \cH^{n-1}(A),
$$
for any Borelian subset $A$ of $S$.
From elementary tools in measure theory and differential geometry, in particular Example \ref{ex_Sgraph}, it follows that, when $S$ is bounded, 
the restriction of~$\cH^{n-1}$ to $S$ may also be described as the weak limit of the measure 
$$
\cH^{n-1}(B\cap S)= \lim_{\epsilon\to 0} \frac{\vol\, B\cap S_\epsilon}{\vol\, S_\epsilon}, 
$$
for any Borelian subset~$B\subset \bR^n$; above,~$\vol$ denotes the Lebesgue measure of~$\bR^n$, and~$S_\eps$ is the~$\epsilon$-tubular neighbourhood of~$S$, that is, the set of points in~$\bR^n$ at distance less than~$\epsilon$ to~$S$.
Equivalently, 
$$
\sigma_S(A)=
\cH^{n-1}(A)= \lim_{\epsilon\to 0} \frac{\vol\, A_1\cap S_\epsilon}{\vol\, S_\epsilon}, 
$$
for any  Borelian subset~$A\subset S$, having denoted by~$A_1$  its $\epsilon'$-tubular neighbourhood with~$\epsilon'=1$. 
In the particular case where $S$ is compact with~$\cH^{n-1}(S)=1$, the above formula simplifies as:
\begin{equation}
	\label{eq_inducedmeascompactS}
	\sigma_S(A)=
\cH^{n-1}(A)= \lim_{\epsilon\to 0} \frac{\vol\, A_1\cap S_\epsilon}{2\epsilon}.
\end{equation}
We will see in Section \ref{subsubsec_cP} that  the Fourier restriction problem on the sphere may be reformulated into a spectral problem, with an operator defined as the spectral analogue to~\eqref{eq_inducedmeascompactS}. 

\subsection{Statement of the Tomas-Stein Restriction Theorem}

Generalising the results obtained in the case of the unit sphere (see Theorem \ref{thm_1rest}) and with optimal range including the Tomas-Stein index (see Section \ref{subsec_Tomas}), Stein\ccite{stein} proved the following result:

\begin{theorem}[\cite{Tomas, stein}] 
\label{t:tsbase} 
 Let $\wh S$ be a smooth  hyper-surface in $\wh \R^{n}$ with $n\geq  2$.
 We denote by $\sigma_{\wh S}$  the induced measure on $\wh S$. 
 We fix  $\psi\in C_c^\infty (\wh S)$ and consider the measure given by 
 $$
 d\sigma(\xi):= \psi(\xi) \, d\sigma_{\wh S}(\xi).
 $$
 If the Gaussian curvature of $\wh S$ does not vanish,
 then the estimates in \eqref{eq:estimepq} hold 
for $q=2$ and every $1 \leq p\leq p_{TS}=(2n+2)/(n+3)$.
Consequently, the  linear map $f\mapsto \cF f|_{\wh S}$ defined on $\cS(\bR^n)$ extends uniquely continuously $L^p(\bR^n)\to L^2(\wh    S)$.
\end{theorem}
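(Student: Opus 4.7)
The plan is to adapt the proof strategy of Theorem \ref{thm_Tomas}: apply the $TT^*$ argument (Appendix \ref{sec_TT*}) to reduce the restriction estimate \eqref{eq:estimepq} with $q=2$ to the $L^p(\bR^n) \to L^{p'}(\bR^n)$ boundedness of the convolution operator $\cP f = f * \kappa$, where $\kappa = \cF^{-1}\sigma$, and then control this operator by a Littlewood-Paley decomposition of $\kappa$. What makes the argument work for a general hyper-surface with non-vanishing Gaussian curvature are exactly the two facts developed in Section \ref{subsec_inducedmeas}: the pointwise decay $|\kappa(x)| = |\wh\sigma(-x)| \lesssim (1+|x|)^{-(n-1)/2}$ provided by Theorem \ref{thm_estwhsigma} (this is where the curvature hypothesis enters), together with the surface-dimension estimate \eqref{eq_Sn-1dim} on the $\sigma$-measure of small balls on $\wh S$.

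Concretely, I would fix a smooth radial bump $\chi \in C_c^\infty(\bR^n)$ supported in the annulus $\{1/2 \leq |x| \leq 2\}$ and $\chi_0 \in C_c^\infty(\bR^n)$ supported near the origin, so that $\chi_0(x) + \sum_{j \geq 1} \chi(2^{-j}x) \equiv 1$ on $\bR^n$. Set $K_j(x) = \kappa(x)\chi(2^{-j}x)$ for $j \geq 1$ and $K_0 = \kappa \chi_0$, write $\cP = \sum_{j \geq 0} T_j$ with $T_j f := f*K_j$. The term $T_0$ is bounded $L^p \to L^{p'}$ for every $1 \leq p \leq 2$ since $K_0$ is smooth with compact support, hence in every Lebesgue space. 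For $j \geq 1$, the decay of $\kappa$ yields $\|K_j\|_{L^\infty} \lesssim 2^{-j(n-1)/2}$, and Young's inequality gives $\|T_j\|_{L^1 \to L^\infty} \lesssim 2^{-j(n-1)/2}$. For the $L^2 \to L^2$ bound I would pass to the Fourier side, writing $\wh{K_j} = \sigma * \wh{\chi_j}$ with $\wh{\chi_j}(\zeta) = 2^{jn}\wh\chi(2^j\zeta)$ a Schwartz function concentrated at scale $2^{-j}$; the rapid decay of $\wh\chi$ localises the convolution to a $2^{-j}$-neighbourhood on $\wh S$, whose $\sigma$-measure is bounded by \eqref{eq_Sn-1dim}, yielding $\|\wh{K_j}\|_{L^\infty} \lesssim 2^{jn}\cdot 2^{-j(n-1)} = 2^j$, and hence $\|T_j\|_{L^2 \to L^2} \lesssim 2^j$ by Plancherel.

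Riesz-Thorin interpolation between these two endpoint bounds gives, for $1 \leq p \leq 2$,
\[
\|T_j\|_{L^p \to L^{p'}} \lesssim 2^{j\left(-\frac{n-1}{2} + \frac{n+1}{p'}\right)}.
\]
The exponent is strictly negative precisely when $p' > 2(n+1)/(n-1)$, i.e.\ $p < p_{TS}$, so the geometric series $\sum_j \|T_j\|_{L^p \to L^{p'}}$ converges and the theorem follows on the open range $1 \leq p < p_{TS}$. The main obstacle is the endpoint $p = p_{TS}$ itself, where the dyadic pieces have uniformly bounded norms but their sum diverges logarithmically. To capture this critical exponent I would follow Stein's refinement from \cite{stein}: embed $\cP$ in an analytic family of operators $\cP_z$ obtained by replacing $\sigma$ with an analytic family of measures built from complex powers of the defining function of $\wh S$ (so that $\cP_z$ interpolates between a bounded $L^1 \to L^\infty$ operator on one vertical line and a bounded $L^2 \to L^2$ operator on another), and then apply Stein's complex interpolation theorem, which furnishes exactly the logarithmic margin required to reach $p_{TS}$.
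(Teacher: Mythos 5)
Your proposal is correct and follows essentially the same route as the paper: the $TT^*$ reduction to the convolution operator $\cP f=f*\cF^{-1}\sigma$, a dyadic decomposition of the kernel with $L^1\to L^\infty$ bounds from the curvature-driven decay of Theorem \ref{thm_estwhsigma} and $L^2\to L^2$ bounds from the ball estimate \eqref{eq_Sn-1dim}, interpolation and summation for $p<p_{TS}$, and Stein's complex interpolation via an analytic family of measures to reach the endpoint $p=p_{TS}$. In fact you supply more detail than the paper, which only sketches this argument and refers to Wolff and Stein for the dyadic and endpoint steps respectively.
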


The methods of proofs in the case of the sphere (see Section \ref{subsec_Tomas}) generalise to hyper-surfaces with non-vanishing Gaussian curvature. 
In particular, they rely on the estimates for the Fourier transform inverse of $\sigma$ in Theorem \ref{thm_estwhsigma}.

 \smallskip  
 Arguing along the same lines as in Section \ref{sec_sphere}, we deduce that the following estimates hold true, for any hyper-surface  $\wh S$ satisfying the hypothesis of Theorem \ref{t:tsbase}:
 \begin{align}
\label{dual3} \|\cP f\|_{L^{p'}(\bR^n)}&\leq C_p \|f\|_{L^{p}(\bR^n)}\\
\label{dual} \|Tg\|_{L^{p'}(\bR^n)}= \|\cF^{-1} (g d\sigma_{\wh S})\|_{L^{p'}(\bR^n)}&\leq \sqrt C_p \|g\|_{L^2(\wh S)}
\\
\label{dual2}
\|T^*f\|_{L^2(\wh S)}= 
\|\cF f|_{\wh S}\|_{L^2(\wh S)}
&\leq \sqrt C_p \|f\|_{L^{p}(\bR^n)}\, .
\end{align} 

 Naturally, the canonical example of a surface satisfying the hypotheses of Theorem~\ref{t:tsbase}   is the unit sphere $\wh \S^{n-1}$ of $\wh \bR^n$, see Section~\ref{sec_sphere}.  
Other examples of hyper-surfaces  such as the paraboloid whose Gaussian curvature does not vanish  (see Example~\ref{ex_parcone}) have been  the subject of a number of works. 
\smallskip The hypothesis
of curvature  in Theorem \ref{t:tsbase} can be  weakened:    similar results are  possible for hyper-surfaces that are not flat but with vanishing Gaussian curvature, such as  the cone  (see Example~\ref{ex_parcone}) which differs from the sphere and the paraboloid, since it has a vanishing principal curvature. In this case, the range of $p$ is smaller depending on the order of tangency of the surface to its tangent space. However, the hypothesis of non-flatness of the hyper-surface is mandatory. Indeed, consider the function  $f:\R^{n}\to\R$ defined by
\beq
\label{cexR}\ds  f(x)= \frac{e^{- |x'|^2}} {1+|x_1|}\, \virgp\qquad x=(x_1,x')\in \R^{n}.
\eeq
Then, one can  check that $f$ belongs to $L^p(\R^n)$, for all $p>1$, but its Fourier transform does not admit a restriction on the hyper-plane $\wh S$ of $\wh \R^n$ defined by $ \wh S=\{\xi \in \wh  \R^n \, /\, \xi_1=0 \}$.

\section{Spectral viewpoint}
\label{sec_spectral}

In this section, we reformulate the Fourier restriction problem on the unit sphere in terms of the spectral decomposition of the Laplace operator. This allows   us to avoid relying on  the Fourier transform and to make the link with the cluster estimates.

\subsection{Spectral description of the operator $\cP$}

In this section, we go back to the first~$L^2$-restriction theorem on the unit sphere (Section \ref{sec_sphere}) in order to start reformulating the problem spectrally. 

We recall that the canonical measure $\sigma_{\bS^{n-1}} $ on $\bS^{n-1}$ coincides with the measure used in  the polar change of coordinates:
\begin{equation}
	\label{eq_polar}
\forall f\in \cS(\bR^n),\qquad 
\int_{\bR^n}f(x)dx = \int_{r=0}^{+\infty} \int_{\bS^{n-1}} f(rx) d\sigma_{\bS^{n-1}} r^{n-1} dr.
\end{equation}

\subsubsection{The operator $\cP$}
\label{subsubsec_cP}
The operator $\cP$ defined in \eqref{eq_opP} may be understood 
in terms of the spectral decomposition of the (self-adjoint extension of the) rescaled  Laplace operator
\begin{equation}
\label{eq_spectral}
-(2\pi)^{-2} \Delta = \int_0^\infty \lambda dE_\lambda
\end{equation}
of~$\bR^n$; for spectral decomposiiton see for instance \cite{RudinFA} or  the book of  M. Lewin\ccite{Lewin}  and the references therein.
The factor $2\pi$ is irrelevant in our analysis. It appears so that  the spectral decomposition $E_\lambda$ interacts well with our (choice of normalisation for the) Fourier transform on $\bR^n$.
The spectral projectors $E_\lambda$ are explicitly given  via the Fourier transform as
\begin{equation}
	\label{eq_spectralELaplaceRn}
	E[a,b] f = \cF^{-1} \left ( 
\bone_{[a,b]}
( |\cdot|^2)\cF(f)\right), 
\qquad a<b, \quad f\in L^2(\bR^n).
\end{equation}
In this particular case, spectral multipliers in $-(2\pi)^{-2}\Delta$, that is,  
$$
 m(-(2\pi)^{-2}\Delta) = \int_0^\infty m(\lambda) dE_\lambda, \quad m\in L^\infty (\bR),
$$
coincide with radial Fourier multipliers:
\begin{equation}
\label{eq_mult_Fvsrad}
	  m(-(2\pi)^{-2}\Delta) f= \cF^{-1} \left( m(|\cdot|^2)\cF(f)\right).
\end{equation}
  
We observe:
\begin{lemma}
\label{lem_cP}
	 The operator $\cP$ defined via \eqref{eq_opP} coincides with 
	 $2\lim_{\varepsilon\to 0}\frac 1\varepsilon E[1,1+\varepsilon]$
%	  on $\cS(\bR^n)$ with 
%$$
%\cP = \lim_{\varepsilon\to 0}\frac 1\varepsilon E_{[1,1+\varepsilon]},
%$$
in the sense that 
$$
\forall f,g\in \cS(\bR^n),\qquad 
(\cP f,g)_{L^2(\bR^n)} = 2\lim_{\varepsilon\to 0}
\frac 1\varepsilon (E_{[1,1+\varepsilon]} f,g)_{L^2(\bR^n)}.
$$
\end{lemma}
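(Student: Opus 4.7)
The plan is to compute both sides of the claimed identity via Plancherel's theorem and polar coordinates, then compare. The factor $2$ will arise naturally from the change of variables $r \mapsto r^2$.

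First I would rewrite the left-hand side on the Fourier side. Since $\widehat{\cP f} = \widehat f \, d\sigma_{\wh\bS^{n-1}}$ by the definition \eqref{eq_opP} of $\cP$, Parseval's identity gives, for $f,g\in\cS(\bR^n)$,
$$
(\cP f, g)_{L^2(\bR^n)} \;=\; \int_{\wh\bS^{n-1}} \widehat f(\omega)\,\overline{\widehat g(\omega)}\, d\sigma_{\wh\bS^{n-1}}(\omega).
$$
For the right-hand side, I would use the explicit form \eqref{eq_spectralELaplaceRn} of the spectral projectors of $-(2\pi)^{-2}\Delta$, which yields by Plancherel
$$
(E_{[1,1+\varepsilon]} f, g)_{L^2(\bR^n)} \;=\; \int_{\wh\bR^n} \bone_{[1,1+\varepsilon]}(|\xi|^2)\,\widehat f(\xi)\,\overline{\widehat g(\xi)}\, d\xi .
$$

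Next, I would apply the polar change of coordinates \eqref{eq_polar} on $\wh\bR^n$, writing $\xi=r\omega$ with $r>0$ and $\omega\in\wh\bS^{n-1}$. Noting that $\bone_{[1,1+\varepsilon]}(r^2)=\bone_{[1,\sqrt{1+\varepsilon}]}(r)$, this gives
$$
(E_{[1,1+\varepsilon]} f, g)_{L^2(\bR^n)} \;=\; \int_1^{\sqrt{1+\varepsilon}} \!\! r^{n-1}\Phi_{f,g}(r)\, dr,
$$
where
$$
\Phi_{f,g}(r) \;:=\; \int_{\wh\bS^{n-1}} \widehat f(r\omega)\,\overline{\widehat g(r\omega)}\, d\sigma_{\wh\bS^{n-1}}(\omega).
$$
Since $f,g\in\cS(\bR^n)$, the function $r\mapsto r^{n-1}\Phi_{f,g}(r)$ is continuous on $[0,\infty)$, and its value at $r=1$ equals precisely $\int_{\wh\bS^{n-1}} \widehat f(\omega)\,\overline{\widehat g(\omega)}\, d\sigma_{\wh\bS^{n-1}}$.

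Finally, I would take the limit $\varepsilon\to 0$ using the fundamental theorem of calculus (or the mean value theorem): since the length of the interval of integration satisfies $\sqrt{1+\varepsilon}-1 = \tfrac{\varepsilon}{2}+o(\varepsilon)$, we get
$$
\lim_{\varepsilon\to 0}\frac{1}{\varepsilon}(E_{[1,1+\varepsilon]} f, g)_{L^2(\bR^n)} \;=\; \frac{1}{2}\,\Phi_{f,g}(1) \;=\; \frac{1}{2}(\cP f, g)_{L^2(\bR^n)},
$$
which is exactly the stated identity. There is no real obstacle here: the only subtle point is tracking the factor $\tfrac{1}{2}$ coming from the derivative of $r\mapsto r^2$ at $r=1$, which accounts for the $2$ in front of the limit.
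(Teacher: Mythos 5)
Your proof is correct and follows essentially the same route as the paper: express $E[1,1+\varepsilon]$ via the Fourier transform, pass to polar coordinates so that the $\xi$-integral becomes an integral over $r\in[1,\sqrt{1+\varepsilon}]$ of the spherical average of $\widehat f\,\overline{\widehat g}$, and let $\varepsilon\to 0$, the factor $2$ coming from $\sqrt{1+\varepsilon}-1\sim\varepsilon/2$. The only cosmetic difference is that you identify $(\cP f,g)$ with the spherical integral at the outset via Parseval, whereas the paper computes the limit first and then recognises it as $\tfrac12(f*\cF^{-1}\sigma_{\wh\bS^{n-1}},g)$; the substance is identical.
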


We may write
$$
\cP = 2\lim_{\varepsilon\to 0}\frac 1\varepsilon E[1,1+\varepsilon]
 = 2\partial_{\lambda=1} E[0,\lambda],
$$
since $E[0,\lambda_2]-E[0,\lambda_1] = E[\lambda_1,\lambda_2]$, for any $0\leq \lambda_1\leq \lambda_2$.

\begin{proof}[Proof of Lemma \ref{lem_cP}]
Using the description of the spectral projector $E[1,1+\varepsilon]$ in terms of the Fourier transform and a polar change of coordinates (see \eqref{eq_spectralELaplaceRn}
 and \eqref{eq_polar} respectively), we obtain:
\begin{align*}
	\frac 1\varepsilon(E[1,1+\varepsilon] f,g)_{L^2(\bR^n)}
&=\frac 1\varepsilon ( 
\bone_{[1,  1+\varepsilon]}(|\cdot|^2) \cF f ,\cF g )_{L^2(\bR^n)}\\
&=\frac 1\varepsilon\int_{\{\xi\colon 1 \leq |\xi|^2\leq 1+\varepsilon\} }
\cF f(\xi) \ \overline{\cF g}(\xi) \ d\xi
\\&=\frac 1\varepsilon\int_1^{ \sqrt{1+\varepsilon}}
\biggl(\int_{\wh \bS^{n-1}} 
\cF f(r\omega) \ \overline{\cF g}(r\omega) 
d\sigma_{ \wh \bS^{n-1}} (\omega)\biggr)\, r^{n-1}dr
\end{align*}
whose limit as $\varepsilon\to 0$ is 
$$
\frac 12 \int_{\wh \bS^{n-1}}
\cF f(\omega) \ \overline{\cF g}(\omega) 
d\sigma_{ \wh \bS^{n-1}}(\omega) 
=\frac 12 (f * \cF^{-1} \sigma_{\wh \bS^{n-1}} , g)_{L^2(\bR^n)}
=
\frac 12 (\cP f, g)_{L^2(\bR^n)}.
$$
\end{proof}

We may equivalently define $\cP$ as 
$$
\cP =- 2\lim_{\varepsilon\to 0}\frac 1\varepsilon E_{[1-\varepsilon,1]},
\quad\mbox{or as}\quad
 \cP = \lim_{\varepsilon\to 0}\frac 1{\varepsilon} E[1-\varepsilon,1+\varepsilon].
$$
With the latter expression, we see that  the  construction for  $\cP$ is the spectral analogue of the geometric construction of the surface measure, see 
Section \ref{subsec_inducedmeas}, especially
\eqref{eq_inducedmeascompactS}.

\medskip

We may similarly define
$$
\cP_{\lambda_0}:= 2\partial_{\lambda=\lambda_0} E[0,\lambda] =2\lim_{\varepsilon\to 0}\frac 1\varepsilon E[\lambda_0,\lambda_0+\varepsilon]= \lim_{\varepsilon\to 0}\frac 1\varepsilon E[\lambda_0-\varepsilon,\lambda_0+\varepsilon],
$$
for any $\lambda_0\in \bR$, 
with $\cP_1=\cP$ for the case of $\lambda_0=1$. 

\subsubsection{Dilations properties}
The properties of $\cP_{\lambda_0}$ may be deduced from 
the properties of~$\cP$ by exploiting  the 2-homogeneity of 
the differential operator $\Delta$,
$$
\mbox{i.e.}\qquad \forall f\in C_c^\infty(\bR^n),\qquad \forall r>0,\qquad 
\Delta (f\circ \delta_r) = r^2 (\Delta f)\circ \delta_r,
$$
for the 
dilations $\delta_r\colon x\mapsto rx$, $r>0$ on $\bR^n$.
Indeed, the 2-homogeneity of $\Delta$ implies
for  any~$m\in L^\infty(\bR)$
\begin{equation}
	\label{eq_hom}
 \forall r>0, \qquad \forall f\in L^2(\bR^n),\qquad 
m(-(2\pi)^{-2}\Delta) (f\circ \delta_r ) = (m(-r^2(2\pi)^{-2}\Delta) f)\circ \delta_r, 
\end{equation}
and in particular for $m=\bone_{[a,b]}$, 
$$
E[a,b]=\bone_{[a,b]}(-(2\pi)^{-2}\Delta)
\quad\mbox{and}\quad  E[r^{-2}a,r^{-2}b]=\bone_{[a,b]}(-r^2(2\pi)^{-2}\Delta).
$$
Hence, we obtain for any $\lambda_0>0$:
$$
\cP_{\lambda_0} f = \frac 1{\lambda_0} (\cP (f\circ \delta_{\sqrt{\lambda_0}}^{-1})  )\circ \delta_{\sqrt{\lambda_0}}.
$$
Indeed in view of\refeq{eq_hom} and in the sense of Lemma\refer{lem_cP}, we have
\begin{align*}
	\cP (f\circ \delta_{\sqrt{\lambda_0}}^{-1})
	&=2\lim_{\varepsilon\to 0}\frac 1\varepsilon (E[\lambda_0,\lambda_0+\varepsilon \lambda_0] f)\circ \delta_{\sqrt{\lambda_0}}^{-1}\\
	&= 2\lim_{\varepsilon'\to 0}\frac {\lambda_0}{\varepsilon'} (E[\lambda_0,\lambda_0+\varepsilon '] f)\circ \delta_{\sqrt{\lambda_0}}^{-1}\\
	&= \lambda_0 \ (\cP_{\lambda_0}f)\circ \delta_{\sqrt{\lambda_0}}^{-1}.
\end{align*}

Using the properties of the dilations on $L^p$-norms, that is,
$$
\forall f\in \cS(\bR^n), \qquad \forall r>0, \qquad \forall p\in [1,\infty],\qquad 
\| f\circ\delta_r \|_{L^p(\bR^n)} = r^{-\frac n p} \|f\|_{L^p(\bR^n)}, 
$$
we obtain for any $p,q\in [1,\infty]$,
\begin{equation}
\label{eq_cPlambda0cP}
	\|\cP_{\lambda_0} \|_{\sL(L^p(\bR^n),L^q(\bR^n))}
=\lambda_0^{-1+\frac n2 (\frac 1p -\frac 1q)} \|\cP \|_{\sL(L^p(\bR^n),L^q(\bR^n))}.
\end{equation}
Recalling that with $\lambda=\eps^{-1}$
$$ E[\lambda,\lambda+1] f = (E[1,1+\eps] (f\circ \delta_{\sqrt{\lambda}}^{-1})  )\circ \delta_{\sqrt{\lambda}}\, ,$$ we also obtain  (for $\lambda=\eps^{-1}$)
\begin{equation}
\label{eq_EepsElambda}
\|E[1,1+\eps] \|_{\sL(L^p(\bR^n),L^q(\bR^n))}
=\lambda^{-\frac n2 (\frac 1p -\frac 1q)} \|E[\lambda,\lambda+1]\|_{\sL(L^p(\bR^n),L^q(\bR^n))}.
\end{equation}

\subsection{Spectral reformulation of the restriction property}

The properties above imply  readily that the $L^p-L^q$-boundedness of $\cP$
is in fact equivalent to certain spectral estimates:
\begin{lemma}
\label{lem_equiv_spectralest}
	Let $p,q\in [1,\infty]$. 
	The operator $\cP$ is bounded $L^p(\bR^n)\to L^q(\bR^n)$
	if and only if 
	\begin{equation}
		\label{eq_cluster_est}
			\exists C>0 \quad \slash \qquad \forall \lambda\geq 1 ,\qquad
	\|E[\lambda,\lambda+1]\|_{\sL(L^p(\bR^n),L^q(\bR^n))} 
	\leq C \lambda^{-1 +\frac n2 (\frac 1p -\frac 1q)}.
	\end{equation}
\end{lemma}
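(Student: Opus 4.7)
The plan is to read the equivalence directly off the two scaling identities \eqref{eq_cPlambda0cP} and \eqref{eq_EepsElambda}, combined with the weak description of $\cP$ as $2\lim_{\varepsilon\to 0}\varepsilon^{-1}E[1,1+\varepsilon]$ from Lemma \ref{lem_cP}. The $\lambda$-powers in those two identities are designed precisely so that, up to constants, the cluster bound at scale $\lambda$ and the operator bound for $\cP$ carry the same information; one direction reads off the cluster estimate by letting $\lambda=1/\varepsilon$, the other integrates $\cP_\mu$ over $\mu\in[1,1+\varepsilon]$.

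For the implication ``cluster bound $\Rightarrow\cP$ bounded'', I would substitute $\lambda=1/\varepsilon$ in \eqref{eq_EepsElambda} to turn \eqref{eq_cluster_est} into
\[
\|E[1,1+\varepsilon]\|_{\sL(L^p,L^q)}\leq C\varepsilon,\qquad \varepsilon\in(0,1],
\]
and then test Lemma \ref{lem_cP} against arbitrary $f,g\in\cS(\bR^n)$ to obtain $|(\cP f,g)_{L^2}|\leq 2C\|f\|_{L^p}\|g\|_{L^{q'}}$; $L^q$--$L^{q'}$ duality (together with a density argument for the endpoint $q=\infty$) then upgrades this to the desired operator bound on $\cP$.

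For the converse, I would first note that for $f,g\in\cS(\bR^n)$, the function $\lambda\mapsto(E[0,\lambda]f,g)_{L^2}$ is smooth on $(0,\infty)$ by the Fourier/polar description \eqref{eq_spectralELaplaceRn}--\eqref{eq_polar}, with derivative $\tfrac12(\cP_\lambda f,g)_{L^2}$. Integrating from $1$ to $1+\varepsilon$ yields the representation
\[
(E[1,1+\varepsilon]f,g)_{L^2}=\tfrac12\int_1^{1+\varepsilon}(\cP_\mu f,g)_{L^2}\,d\mu,
\]
and combining with \eqref{eq_cPlambda0cP}, which gives $\|\cP_\mu\|_{\sL(L^p,L^q)}\lesssim \|\cP\|_{\sL(L^p,L^q)}$ uniformly for $\mu\in[1,2]$, I obtain $\|E[1,1+\varepsilon]\|_{\sL(L^p,L^q)}\lesssim\varepsilon\,\|\cP\|_{\sL(L^p,L^q)}$; plugging this into \eqref{eq_EepsElambda} with $\varepsilon=1/\lambda$ produces the cluster estimate \eqref{eq_cluster_est} with the sharp $\lambda$-power.

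The only mildly delicate point is justifying this integral representation, i.e.\ the differentiability in $\lambda$ of $(E[0,\lambda]f,g)_{L^2}$ for Schwartz $f,g$, but it follows at once from the polar formula \eqref{eq_polar} applied to the Fourier-side expression of $E[0,\lambda]$ in \eqref{eq_spectralELaplaceRn}. Everything else is just a matching of exponents between \eqref{eq_cPlambda0cP} and \eqref{eq_EepsElambda}, which is exactly where the formula $-1+\tfrac n2(\tfrac1p-\tfrac1q)$ appearing in \eqref{eq_cluster_est} originates.
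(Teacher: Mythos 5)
Your proof is correct and follows essentially the same route as the paper: both directions rest on Lemma \ref{lem_cP}, the scaling identities \eqref{eq_cPlambda0cP}--\eqref{eq_EepsElambda}, and the representation of the spectral projector as $\tfrac12\int \cP_\mu\,d\mu$, the only cosmetic difference being that you work on $[1,1+\varepsilon]$ and rescale rather than integrating directly over $[\lambda,\lambda+1]$. Your added care with the weak pairing, duality, and the differentiability of $\lambda\mapsto(E[0,\lambda]f,g)_{L^2}$ simply makes explicit what the paper treats ``at least formally.''
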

\begin{proof}
 Since 
	$\cP_{\lambda_0}= 2\partial_{\lambda=\lambda_0} E[0,\lambda]$,
	we have, at least formally,
	$$
	E[\lambda,\lambda+1] = \frac 12\int_\lambda^{\lambda+1} \cP_{\lambda_0} d\lambda_0.
	$$ 
	Therefore, 	if $\cP$ is bounded $L^p(\bR^n)\to L^q(\bR^n)$, then we have,   for all $\lambda \geq 1$,
\begin{align*}
&\|E[\lambda,\lambda+1]\|_{\sL(L^p(\bR^n),L^q(\bR^n))}
	\leq
	\frac 12 \int_\lambda^{\lambda+1} \| \cP_{\lambda_0}\|_{\sL(L^p(\bR^n),L^q(\bR^n))}
 d\lambda_0 	\\
 &\qquad \qquad \leq \frac 12 \| \cP\|_{\sL(L^p(\bR^n),L^q(\bR^n))}
 \int_\lambda^{\lambda+1}  \lambda_0^{-1+\frac n2 (\frac 1p -\frac 1q)}  d\lambda_0 	\asymp \lambda^{-1+\frac n2 (\frac 1p -\frac 1q)},
\end{align*}
having used  \eqref{eq_cPlambda0cP}.
Conversely, combining Lemma \ref{lem_cP} with  \eqref{eq_EepsElambda},
\begin{align*}
	\|\cP \|_{\sL(L^p(\bR^n),L^q(\bR^n))}
	&= 
	\lim_{\eps\to 0} \frac 2 \eps  \|E[1,1+\eps ]\|_{\sL(L^p(\bR^n),L^q(\bR^n))}
	\\&= 2\lim_{\lambda\to +\infty} \lambda^{1-\frac n2 (\frac 1p -\frac 1q)}  \|E[\lambda,\lambda+1 ]\|_{\sL(L^p(\bR^n),L^q(\bR^n))} 
\end{align*} 
is bounded when \eqref{eq_cluster_est} holds.
\end{proof}

\subsubsection{Equivalences to the restriction property}
Let us summarise the equivalences  to the restriction property for the sphere that we have obtained in the following statement: 
\begin{corollary}
\label{corlem_equiv_spectralest}
Let $p\in [1,+\infty)$.
The following are equivalent:
\begin{enumerate}
	\item The restriction operator  $f\mapsto \cF f|_{\wh \bS^{n-1}}$ is bounded  $L^p(\bR^n) \to L^2(\wh \bS^{n-1})$.
	\item  The operator $g\mapsto  \cF^{-1} (g d\sigma_{\wh \bS^{n-1}})$ is bounded $L^2(\wh \bS^{n-1}) \to L^{ p'}(\bR^n)$.	
	\item The operator $\cP$ is bounded $L^p(\bR^n)\to L^{p'}(\bR^n)$.
	\item There exists $C>0$ such that for any $\lambda\geq 1$, we have:
$$	
	\|E[\lambda,\lambda+1]\|_{\sL(L^p(\bR^n),L^{p'}(\bR^n))} 
	\leq C \lambda^{-1 +n (\frac 1p-\frac 12)},
$$
 or equivalently
$$
\|E[\lambda,\lambda+1]\|_{\sL(L^p(\bR^n),L^{2}(\bR^n))} 
	\leq \sqrt C \lambda
	^{-\frac 12 +\frac n2 (\frac 1p-\frac 12)},
$$
or equivalently
$$
\|E[\lambda,\lambda+1]\|_{\sL(L^2(\bR^n),L^{p'}(\bR^n))} 
	\leq \sqrt C \lambda^{-\frac 12 +\frac n2 (\frac 1p-\frac 12)}.
$$
\end{enumerate}	
Moreover, by the Tomas-Stein theorem (Theorem \ref{t:tsbase}), the sharp range for Part (1) above is 
$1\leq p \leq p_{TS}=(2n+2)/(n+3)$. 
\end{corollary}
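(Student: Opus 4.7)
The plan is to read Corollary \ref{corlem_equiv_spectralest} as a bookkeeping exercise assembling three ingredients already on the table: duality, the $TT^*$ argument of Appendix \ref{sec_TT*}, and the just-proved Lemma \ref{lem_equiv_spectralest}. None of the individual implications is delicate; the content is to recognise that every assertion is merely a reformulation of the same $L^p$--$L^{p'}$ spectral bound for the Laplacian on $\bR^n$.

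First, the equivalence of (1) and (2) is pure duality: the extension operator $T\colon g\mapsto \cF^{-1}(g\, d\sigma_{\wh \bS^{n-1}})$ and the restriction operator $T^*\colon f\mapsto \cF f|_{\wh \bS^{n-1}}$ are formal adjoints with respect to the natural pairings between $L^p(\bR^n)$--$L^{p'}(\bR^n)$ on one side and $L^2(\wh\bS^{n-1})$ on the other. Boundedness of one is therefore equivalent to boundedness of the other with the same operator norm. The step from (1)--(2) to (3) is the $TT^*$ argument exactly as in Section \ref{subsec_pfthm_1rest}: the identity $\cP = TT^*$ makes both directions immediate and records the square-root relation between the constants.

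Next, the equivalence of (3) with the first spectral estimate in (4) is precisely Lemma \ref{lem_equiv_spectralest} applied with $q = p'$, whose exponent $-1 + \tfrac n2(\tfrac1p - \tfrac 1q)$ collapses to $-1 + n(\tfrac1p - \tfrac12)$. To link the three estimates in (4) to one another, I would apply the $TT^*$ argument a second time, now to the spectral projector $E[\lambda,\lambda+1]$ itself. Since $-\Delta_{\bR^n}$ is self-adjoint on $L^2(\bR^n)$, $E[\lambda,\lambda+1]$ is an orthogonal projection, hence self-adjoint and idempotent, so that $E[\lambda,\lambda+1] = E[\lambda,\lambda+1]\, E[\lambda,\lambda+1]^*$. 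The $TT^*$ identity then yields
$$
\|E[\lambda,\lambda+1]\|_{\sL(L^p,L^{p'})}
= \|E[\lambda,\lambda+1]\|_{\sL(L^p,L^2)}^2
= \|E[\lambda,\lambda+1]\|_{\sL(L^2,L^{p'})}^2,
$$
which produces exactly the three stated exponents $-1 + n(\tfrac1p - \tfrac12)$ and $-\tfrac12 + \tfrac n2(\tfrac1p - \tfrac12)$.

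The only subtlety — more notational than mathematical, and the one I would flag as the main care-point rather than an obstacle — is that one runs the $TT^*$ manipulation on an operator whose $L^p$--$L^{p'}$ boundedness is the very thing being characterised. This is resolved by phrasing each statement as an \emph{a priori} estimate on Schwartz data, so that the sesquilinear form $\langle E[\lambda,\lambda+1] f, g\rangle = \langle E[\lambda,\lambda+1] f,\, E[\lambda,\lambda+1] g\rangle$ is unambiguous and the standard density extension applies at the end. The sharp range $1 \leq p \leq p_{TS}$ in the last sentence of the corollary is then inherited directly from Theorem \ref{t:tsbase} applied to $\wh\bS^{n-1}$, whose Gaussian curvature is nowhere zero.
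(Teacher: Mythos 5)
Your proposal is correct and follows essentially the same route as the paper: duality and the $TT^*$ identity $\cP=TT^*$ for the equivalence of (1)--(3), Lemma \ref{lem_equiv_spectralest} with $q=p'$ for (3)$\Leftrightarrow$(4), and a second application of the $TT^*$ argument to the self-adjoint idempotent $E[\lambda,\lambda+1]=E[\lambda,\lambda+1]^2$ to relate the three estimates in (4), with the sharp range inherited from Theorem \ref{t:tsbase}. Your remark about phrasing everything as \emph{a priori} estimates on Schwartz data is a reasonable precision that the paper leaves implicit.
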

\begin{proof}[Proof of Corollary \ref{corlem_equiv_spectralest}]
The equivalence follows readily from 
	Lemma \ref{lem_equiv_spectralest}
 together with the comments on the  proof of Theorem \ref{thm_1rest} 
at the end of Section \ref{subsec_pfthm_1rest}
 and the application of the $TT^*$ argument to $E[\lambda,\lambda+1]=E[\lambda,\lambda+1]^2$.
\end{proof}

 Instead of considering $-\Delta$, 
we can obtain similar estimates  as in Part (4), but for~$\sqrt{-\Delta}$ instead of $-\Delta$ by  modifying the arguments above  (a more direct reasoning using the Fourier transform is explained at the beginning of Chapter 5 in \cite{Sogge2017}):
\begin{corollary}
\label{cor_clustersqrtDelta}
For any $1\leq p \leq p_{TS}$, there exists $C>0$ such that  for any $\lambda\geq 1$, we have
$$
\|\bone_{[\lambda,\lambda+1]}(\sqrt{-\Delta})\|_{\sL ( L^p(\bR^n),L^2(\bR^n))}
\leq C  \lambda ^{\delta(p)}, \qquad 
\delta(p) := n  \left(\frac 1p -\frac 12\right)-\frac 12.
$$
We have equivalently, 
$$
\|\bone_{[\lambda,\lambda+1]}(\sqrt{-\Delta})\|_{\sL ( L^2(\bR^n),L^{p'}(\bR^n))}
\leq C  \lambda ^{\delta(p)}, 
$$
or 
$$
\|\bone_{[\lambda,\lambda+1]}(\sqrt{-\Delta})\|_{\sL ( L^p(\bR^n),L^{p'}(\bR^n))}
\leq C^2  \lambda ^{2\delta(p)}.
$$
\end{corollary}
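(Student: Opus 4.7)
The plan is to derive the $\sqrt{-\Delta}$ cluster estimates directly from those for $-(2\pi)^{-2}\Delta$ established in Corollary \ref{corlem_equiv_spectralest}(4), via spectral covering and orthogonality. The key observation is that
$$
\bone_{[\lambda,\lambda+1]}(\sqrt{-\Delta}) = E\!\left[\left(\frac{\lambda}{2\pi}\right)^2,\left(\frac{\lambda+1}{2\pi}\right)^2\right],
$$
because $\sqrt{-\Delta}\in[\lambda,\lambda+1]$ iff $-(2\pi)^{-2}\Delta\in[\mu_0,\mu_1]$ with $\mu_0=(\lambda/(2\pi))^2$ and $\mu_1=((\lambda+1)/(2\pi))^2$. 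For $\lambda\geq 1$, the length $\mu_1-\mu_0=(2\lambda+1)/(4\pi)^2$ is of order $\lambda$, whereas $\mu_0\sim \lambda^2$.

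Next I would split $[\mu_0,\mu_1]$ into $N\lesssim \lambda$ disjoint intervals $I_j=[\mu_0+j,\mu_0+j+1]$, for $j=0,\dots,N-1$ (absorbing the fractional tail into a last slightly shorter piece without changing the argument). For each $j$, Corollary \ref{corlem_equiv_spectralest}(4) applied at level $\mu_0+j\sim\lambda^2$ yields
$$
\|E[I_j]\|_{\sL(L^p(\bR^n),L^2(\bR^n))} \leq \sqrt C\,(\mu_0+j)^{-\frac12+\frac n2(\frac1p-\frac12)} \lesssim \lambda^{-1+n(\frac1p-\frac12)}.
$$
Since the $E[I_j]$ are mutually orthogonal projectors whose sum equals $\bone_{[\lambda,\lambda+1]}(\sqrt{-\Delta})$, Pythagoras gives
$$
\|\bone_{[\lambda,\lambda+1]}(\sqrt{-\Delta}) f\|_{L^2}^2 = \sum_{j=0}^{N-1} \|E[I_j]f\|_{L^2}^2 \lesssim N\,\lambda^{-2+2n(\frac1p-\frac12)}\|f\|_{L^p}^2 \lesssim \lambda^{-1+2n(\frac1p-\frac12)}\|f\|_{L^p}^2,
$$
and taking square roots produces the exponent $\delta(p)=n(\tfrac1p-\tfrac12)-\tfrac12$ as required.

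For the equivalent formulations, I would argue exactly as at the end of Section \ref{subsec_pfthm_1rest}: the operator $P:=\bone_{[\lambda,\lambda+1]}(\sqrt{-\Delta})$ is a self-adjoint projector, so $\|P\|_{\sL(L^2,L^{p'})}=\|P\|_{\sL(L^p,L^2)}$ by duality, and $\|P\|_{\sL(L^p,L^{p'})}=\|P^2\|_{\sL(L^p,L^{p'})}\leq \|P\|_{\sL(L^2,L^{p'})}\|P\|_{\sL(L^p,L^2)}\leq C^2\lambda^{2\delta(p)}$ by the $TT^*$ argument of Appendix \ref{sec_TT*}. The only nontrivial step in the argument is the orthogonality-driven $\sqrt{N}$ gain: summing the individual $L^p\to L^2$ bounds trivially would produce $\lambda^{+1}$ times the single-interval bound rather than $\lambda^{1/2}$, and it is precisely the $L^2$-orthogonality of spectral pieces that saves a square root and yields the sharp exponent $\delta(p)$. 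No other subtlety arises provided $1\leq p\leq p_{TS}$, which is exactly the range in which Corollary \ref{corlem_equiv_spectralest}(4) applies.
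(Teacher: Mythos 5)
Your proof is correct, and it reaches the stated bounds by a genuinely different route from the paper. The paper writes $\bone_{[\lambda,\lambda+1]}(\sqrt{-\Delta})=E[\lambda^2,(\lambda+1)^2]=\tfrac12\int_{\lambda^2}^{(\lambda+1)^2}\cP_{\lambda_0}\,d\lambda_0$ and integrates the bound $\|\cP_{\lambda_0}\|_{\sL(L^p,L^{p'})}\asymp\lambda_0^{-1+\frac n2(\frac1p-\frac1{p'})}$ from \eqref{eq_cPlambda0cP} over an interval of length $\sim\lambda$ at height $\lambda_0\sim\lambda^2$, obtaining $\lambda^{2\delta(p)}$ directly at the $L^p\to L^{p'}$ level, and then invokes $TT^*$ for the other two forms. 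You instead partition the same spectral interval into $\sim\lambda$ unit windows, apply the unit-window $L^p\to L^2$ estimate of Part (4) of Corollary \ref{corlem_equiv_spectralest} to each, and recover the factor $\sqrt N$ (rather than $N$) through the mutual orthogonality of the spectral projectors, which yields $\lambda^{\delta(p)}$ at the $L^p\to L^2$ level first; the remaining two bounds then follow from self-adjointness, $P^2=P$ and the $TT^*$ argument of Appendix \ref{sec_TT*}, exactly as you say. The trade-off is instructive: the paper's continuous superposition needs no orthogonality because at the squared ($TT^*$) level the naive triangle inequality is already sharp, whereas your discrete argument isolates the almost-orthogonality mechanism that is the standard engine of cluster estimates and produces the $L^p\to L^2$ statement directly --- a formulation that transfers to settings where only unit-window $L^p\to L^2$ bounds are available. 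Your closing observation is accurate: summing the unit-window $L^p\to L^2$ bounds by the triangle inequality alone would lose a full factor of $\lambda$, and Pythagoras is precisely what restores the exponent $\delta(p)$.

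Two harmless technicalities you may wish to smooth out. First, with your normalisation the lowest window $[\mu_0,\mu_0+1]$ has $\mu_0=(\lambda/2\pi)^2<1$ when $\lambda<2\pi$, so Part (4) of Corollary \ref{corlem_equiv_spectralest} (stated for spectral parameter at least $1$) does not literally cover it; either drop the factor $2\pi$ as the paper does (it declares it irrelevant), use the scaling identity \eqref{eq_EepsElambda}, or absorb the compact range of $\lambda$ into the constant. Second, for Pythagoras take the windows half-open (or note that single points carry no spectral mass for $-\Delta$), and note that the shorter final window obeys the same bound since $E[a,b]=E[a,b]\,E[a,a+1]$ gives $\|E[a,b]f\|_{L^2}\le\|E[a,a+1]f\|_{L^2}$. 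Neither point affects the validity of the argument.
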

 
\begin{proof}[Proof of Corollary \ref{cor_clustersqrtDelta}]
The equivalence comes from  the $TT^*$ argument. Hence, it suffices to prove the $L^p-L^{p'}$-inequality. 
We write
\begin{align*}
\bone_{[\lambda,\lambda+1]}(\sqrt{-\Delta})
 	&= \bone_{[\lambda^2,(\lambda+1)^2]}(-\Delta)
 	= E[\lambda^2,(\lambda+1)^2] 
 	\\&= 
 	 \int_{\lambda^2}^{(\lambda+1)^2}
 	 \partial_{\lambda_0} E[0,\lambda_0]
 	  d\lambda_0
 	  =\frac 12 \int_{\lambda^2}^{(\lambda+1)^2}
 	  \cP_{\lambda_0} d\lambda_0,	
\end{align*}
so using \eqref {eq_cPlambda0cP}
 	\begin{align*}
 &\|\bone_{[\lambda,\lambda+1]}(\sqrt{-\Delta})\|_{\sL ( L^p(\bR^n),L^{p'}(\bR^n))}
 	\\&\quad \leq\frac 12 \int_{\lambda^2}^{(\lambda+1)^2}
 	 \| \cP_{\lambda_0} \|_{\sL ( L^p(\bR^n),L^{p'}(\bR^n))}
 d\lambda_0\\
 &\qquad
 =\frac 12 \|\cP \|_{\sL(L^p(\bR^n),L^2(\bR^{p'}))}
 \int_{\lambda^2}^{(\lambda+1)^2}
\lambda_0^{-1+\frac n2 (\frac 1p -\frac 1{p'})} 
 d\lambda_0		\\
 &\qquad   \asymp \|\cP \|_{\sL(L^p(\bR^n),L^{p'}(\bR^2))} \lambda^{n(\frac 1p -\frac 1{p'}) -1}.
 	\end{align*}
 	We recognise $n(\frac 1p -\frac 1{p'}) -1 = 2\delta(p)$. 
 	We conclude with  $\cP$ being $L^p\to L^{p'}$-bounded for~$1\leq p\leq p_{TS}$ (see Corollary \ref{corlem_equiv_spectralest}). 
\end{proof}
Proceeding as for the proof of Corollary \ref{corlem_equiv_spectralest}, we can show that  any of the equivalent estimates in  Corollary \ref{cor_clustersqrtDelta} is equivalent to 
the Tomas-Stein restriction theorem in Theorem~\ref{t:tsbase}.
Moreover, the range in $p$ is sharp. 
 
\subsection{Links with the cluster estimates} 
\label{subsec_cluster}

We can consider similar estimates as in Part~(4) of Corollary~\ref{corlem_equiv_spectralest}   or in Corollary \ref{cor_clustersqrtDelta}
 in other contexts, 
that is, the following estimates 
\begin{equation}
	\label{eq_cluster0}
		\exists C>0 \quad \slash \qquad \forall \lambda\geq 1, \qquad
	\|\bone_{[\lambda,\lambda+1]}(\cL)\|_{\sL(L^p(M),L^{q}(M))} 
	\leq C \lambda^{\gamma_{p,q}},
\end{equation}
for some indices $p,q\in [1,\infty]$ and exponents $\gamma_{p,q}\in \bR$; 
expressing these estimates  necessitates 
a non-negative  self-adjoint operator  $\cL$ on $L^2(M)$ 
and
a measurable space $M$ equipped with a sigma-finite positive measure.
This may be  studied in settings different from $\bR^n$, and 
no  meaning needs to be attached to the operator $\cP$ or to the restriction operator. In particular,  the existence of a Fourier transform  is not required. 
Furthermore, 
the space $M$ need not be even a manifold but e.g. a tree or a graph with $\cL$ being the graph Laplacian.

Roughly speaking, the estimates in \eqref{eq_cluster0} measure the density or the distribution of $\cL$-eigenvalues in a window of length 1 in the $\sL(L^p,L^q)$-norm.
When the spectrum is discrete, taking $q=2$, they yield $L^p$-estimates of $\cL$-eigenfunctions, 
and furthermore, estimates for the $L^p-L^2$-boundedness of the corresponding spectral projectors, 
see \eqref{eq_estPimu} below in the case of a compact Riemannian manifold. 
Variations of the estimates in \eqref{eq_cluster0} may be considered, for instance for other spaces than~$L^r(M)$, $r=p,q$, 
or different windows from $[\lambda,\lambda+1]$,  see e.g. the review of  P. Germain\ccite{Germain}.
These types of estimates are often called \textit{cluster estimates}.

\smallskip The most notable example of cluster estimates are set on a compact Riemannian manifold $M$ equipped with the volume element.
In this case, 
the Laplace-Beltrami operator~$\cL$ on a compact Riemannian manifold has compact resolvent, so its spectrum is discrete and each eigenvalue has finite multiplicity. 
The spectral decomposition of~$\sqrt\cL$ may be described very concretely in terms of a chosen orthonormal basis of $\cL$-eigenfunctions.
The corresponding cluster estimates have been proved by C. Sogge 
 on the sphere and then on any compact Riemannian manifold\ccite{Sogge2017}:

\begin{theorem}
\label{thm_cluster}
	Let $(M,g)$ be a compact Riemannian manifold of dimension $n\geq 2$.
	We keep the same notation for the Laplace-Beltrami operator $\cL$ and its self-adjoint extension on $L^2(M)$.
	The operator $\sqrt{\cL}$ satisfies the  cluster estimates 
	as in \eqref{eq_cluster0} with $q=2$, $p\in [1,2]$ and
	$$ 
	\gamma_{p,2}:= \left\{ \begin{array}{ll}
 	 n \left(\frac 1p -\frac 12\right)-\frac 12 & \mbox{if}\ 1\leq p \leq p_{TS}=\frac{2n+2}{n+3},\\
 	\frac{n-1}2 \left(\frac 1p-\frac 12 \right)& \mbox{if}\ p_{TS} \leq p \leq 2.
 \end{array}
\right.
	$$
	Furthermore, these estimates are sharp  in the sense that, for each $p\in [1,2]$,  there exist  compact Riemannian manifolds for which  the exponent $\gamma_{p,2}$  is sharp. 
\end{theorem}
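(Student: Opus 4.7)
The plan is to follow Sogge's classical approach, reducing the cluster estimate to short-time control of the half-wave group $e^{it\sqrt{\cL}}$. First, I would fix an even Schwartz function $\chi\geq 0$ on $\bR$ with $\chi\geq 1$ on $[0,1]$ and with $\widehat{\chi}$ supported in $(-\delta,\delta)$, where $\delta>0$ is smaller than half the injectivity radius of $M$. Then $\bone_{[\lambda,\lambda+1]}(\sqrt{\cL})\leq \chi(\sqrt{\cL}-\lambda)$ as self-adjoint operators, so by the $TT^{*}$-argument of Appendix~\ref{sec_TT*} it is enough to prove
$$
\|\chi^{2}(\sqrt{\cL}-\lambda)\|_{\sL(L^{p}(M),L^{p'}(M))}\lesssim \lambda^{2\gamma_{p,2}}.
$$
The Fourier inversion formula expresses this smoothed projector through the half-wave propagator as
$$
\chi^{2}(\sqrt{\cL}-\lambda)=\frac{1}{2\pi}\int_{\bR}\widehat{\chi^{2}}(t)\,e^{-it\lambda}\,e^{it\sqrt{\cL}}\,dt,
$$
with $\widehat{\chi^{2}}=\widehat{\chi}\ast\widehat{\chi}$ supported in $(-2\delta,2\delta)$; moreover, $e^{it\sqrt{\cL}}$ is unitary on $L^{2}(M)$ by the spectral theorem.

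Next, I would build a short-time parametrix for the half-wave group on $|t|<2\delta$. On this time window, finite propagation speed of the wave equation keeps the Schwartz kernel of $\cos(t\sqrt{\cL})$ supported in $\{d(x,y)\leq |t|\}$, and the classical Hadamard construction (equivalently, a microlocal Fourier integral operator representation) yields a kernel of the form
$$
\cos(t\sqrt{\cL})(x,y)=\int_{\wh\bR^{n}}a(t,x,y,\xi)\,e^{i(\varphi(x,y,\xi)+t|\xi|_{g^{*}})}\,d\xi\,+\,R(t,x,y),
$$
in geodesic normal coordinates centred at $y$, with phase $\varphi(x,y,\xi)$ agreeing with $-\langle \exp_{y}^{-1}(x),\xi\rangle$ to leading order, classical amplitude $a$ of order $0$, and smoothing remainder $R$. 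Since $\chi$ is even, the identity above for $\chi^{2}(\sqrt{\cL}-\lambda)$ may be rewritten with $\cos(t\sqrt{\cL})$ in place of $e^{it\sqrt{\cL}}$; performing stationary phase in $t$ at the critical frequency $|\xi|_{g^{*}}=\lambda$ and then in the angular variables of $\xi$ at critical directions parallel to $\exp_{y}^{-1}(x)$, one derives the kernel bound
$$
|K_{\lambda}(x,y)|\lesssim \lambda^{n-1}(1+\lambda d(x,y))^{-(n-1)/2},
$$
valid for $d(x,y)\leq 2\delta$ and negligible elsewhere, together with the refined oscillatory expansion
$$
K_{\lambda}(x,y)=\lambda^{(n-1)/2}\sum_{\pm}a_{\pm}(x,y,\lambda)\,e^{\pm i\lambda d(x,y)}+O(\lambda^{-N}),
$$
with symbol bounds $|a_{\pm}|\lesssim (1+\lambda d(x,y))^{-(n-1)/2}$.

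The oscillatory structure of $K_{\lambda}$ mirrors that of the kernel $\kappa$ in \eqref{eq_kappa}--\eqref{eq_estkappa}, with $|x|$ replaced by $d(x,y)$ and the unit sphere by the $\lambda$-rescaled cospheres in $T^{*}_{y}M$. The Tomas-Stein dyadic scheme sketched for Theorem~\ref{thm_Tomas} therefore transplants to the present setting: decomposing $K_{\lambda}=\sum_{j}K_{\lambda,j}$ according to $\lambda d(x,y)\sim 2^{j}$ and interpolating, for each $j$, between the obvious $L^{1}\to L^{\infty}$ bound from the kernel sup-norm and an $L^{2}\to L^{2}$ bound extracted from the oscillation $e^{\pm i\lambda d(x,y)}$, one obtains
$$
\|\chi^{2}(\sqrt{\cL}-\lambda)\|_{\sL(L^{p},L^{p'})}\lesssim \lambda^{2\gamma_{p,2}},\qquad 1\leq p\leq p_{TS},
$$
with $2\gamma_{p,2}=2n(\tfrac{1}{p}-\tfrac{1}{2})-1$, exactly as in Theorem~\ref{t:tsbase}. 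For the second range $p_{TS}\leq p\leq 2$, I would Riesz-Thorin interpolate this endpoint estimate against the trivial bound $\|\chi^{2}(\sqrt{\cL}-\lambda)\|_{\sL(L^{2},L^{2})}\lesssim 1$: a short computation recovers the exponent $\gamma_{p,2}=\frac{n-1}{2}(\frac{1}{p}-\frac{1}{2})$ on the nose. Sharpness is checked on $\bS^{n}$, where $\sqrt{k(k+n-1)}=k+O(k^{-1})$: testing the projector on a bump concentrated at a pole produces a zonal harmonic of degree $k\simeq \lambda$ with $L^{\infty}$-norm $\sim k^{(n-1)/2}$, saturating the first range, while highest-weight harmonics $Y_{k}^{k}\sim (\sin\theta)^{k}$ concentrate in a band of width $\sim k^{-1/2}$ about the equator and saturate the second. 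The main obstacle is the second step: the rigorous Hadamard parametrix and the stationary-phase bookkeeping required to secure the kernel estimates uniformly in $\lambda$; once those are in hand, the Tomas-Stein interpolation is a routine adaptation of the Euclidean argument of Section~\ref{subsec_Tomas}.
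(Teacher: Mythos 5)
Your architecture is essentially the paper's (that is, Sogge's) proof: replace $\bone_{[\lambda,\lambda+1]}(\sqrt{\cL})$ by a smoothed multiplier $\chi(\sqrt{\cL}-\lambda)$, write it through the half-wave group with $\widehat\chi$ supported below the injectivity radius, build a short-time Hadamard parametrix, extract the oscillatory kernel bounds, run a Tomas--Stein scheme for the point-focusing range, and interpolate with the trivial $L^2$ bound for $p_{TS}\leq p\leq 2$; the sphere examples you give for sharpness are also the standard ones. The only structural difference from the paper's sketch is cosmetic: the paper records the $p=1$ case separately via the pointwise Weyl law and interpolates the three points $p=1,\,p_{TS},\,2$, whereas you would recover $p=1$ from the same kernel decomposition.

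There is, however, one step that fails as you describe it, and it is exactly the step the paper flags as ``a deep study of certain oscillatory integral operators.'' Dyadic decomposition in $\lambda d(x,y)\sim 2^j$ with interpolation of an $L^1\to L^\infty$ and an $L^2\to L^2$ bound per piece sums only for $p<p_{TS}$: at $p=p_{TS}$ every one of the $\sim\log\lambda$ scales contributes comparably, so the naive scheme loses a factor $\log\lambda$ at the endpoint. This loss is not harmless here, because the second range is obtained by interpolating the endpoint with $p=2$; a short computation shows that interpolating instead from any fixed $p_0<p_{TS}$ gives the exponent $\theta\,[\,2n(\tfrac1{p_0}-\tfrac12)-1\,]$ with $\tfrac1p-\tfrac12=\theta(\tfrac1{p_0}-\tfrac12)$, which exceeds the claimed $(n-1)(\tfrac1p-\tfrac12)$ precisely when $p_0<p_{TS}$, and optimising $p_0$ in $\lambda$ still leaves a logarithmic loss throughout $[p_{TS},2)$. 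So the genuine endpoint estimate for $\chi(\sqrt{\cL}-\lambda)$ is indispensable, and it is not a ``routine adaptation'': one needs Stein's theorem on oscillatory integral operators with Carleson--Sj\"olin phases (proved by complex interpolation with an analytic family), applied to the phase $d(x,y)$, whose admissibility uses the non-degeneracy (curvature) of the geodesic spheres. Relatedly, your per-piece $L^2\to L^2$ bound is not ``extracted from the oscillation $e^{\pm i\lambda d(x,y)}$'' alone: a flat phase would give nothing, and it is this same curvature condition that plays the role the decay of $\widehat{\sigma_{\wh\bS^{n-1}}}$ plays in Section \ref{subsec_Tomas}. With that endpoint theorem supplied, the rest of your outline goes through and coincides with the proof the paper cites.
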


Theorem \ref{thm_cluster} means that for any $p\leq 2$, there exists a constant~$C>0$ such that 
\begin{equation}
	\label{eq_thm_cluster}
	\forall \lambda\geq 1, \quad  \forall f\in L^p(M), \qquad
	\|\bone_{[\lambda,\lambda+1]}(\sqrt{ \cL}) f\|_{L^2(M)} 
	\leq C \lambda^{\gamma_{p,2}} \|f\|_{L^p(M)},
\end{equation}	
with exponents $\gamma_{p,2}$ given in the statement.
In particular, 
the critical index  is again~$p_{TS} = \frac {2n+2}{n+3}$.
The regimes~$p\in [1,p_{TS}]$ and~$p\in [p_{TS},2]$ are sometimes called {\it point-focusing} and {\it geodesic-focusing} respectively.
The exponents in the point-focusing regime, that is, 
$$
\gamma_{p,2}=n  \left(\frac 1p -\frac 12\right)-\frac 12 =\delta(p), 
\qquad 1\leq p \leq p_{TS},
$$
 are the same as for the cluster estimates on $\bR^n$ for $\sqrt{-\Delta}$, see Corollary \ref{cor_clustersqrtDelta}.
 
 \smallskip Before sketching elements of the proof, 
 we observe that, by the $TT^*$ argument (see Appendix~\ref{sec_TT*}), the cluster estimates in \eqref{eq_thm_cluster} are equivalent to 
 \begin{align}
 \label{eq_thm_cluster_dual2p'}
 	 \forall g\in L^2(M), \qquad
	\|\bone_{[\lambda,\lambda+1]}(\sqrt{ \cL}) g\|_{L^{p'}(M)} 
	&\leq C \lambda^{\gamma_{p,2}} \|g\|_{L^2(M)}\\
	\label{eq_thm_cluster_dualpp'}
	\forall f\in L^p(M), \qquad
	\|\bone_{[\lambda,\lambda+1]}(\sqrt{ \cL}) f\|_{L^{p'}(M)} 
	&\leq C^2 \lambda^{2\gamma_{p,2}} \|f\|_{L^p(M)}.
 \end{align}
 
 \begin{proof}[Elements of the proof of Theorem \ref{thm_cluster}] The first idea of the proof consists in establishing\refeq{eq_thm_cluster} for  $p=1,2$, using  functional analysis. Indeed, we readily have that 
\begin{equation}
	\label{eq_thmL2}
\|\bone_{[\lambda,\lambda+1]}(\sqrt{ \cL})\|_{\sL(L^2(M))}\leq 1\, ,
\end{equation}
and combining   the spectral theory for unbounded self-adjoint operators on compact Riemannian manifold  and  the point-wise sharp Weyl law, it follows that \begin{equation}
	\label{eq_thmL1}
\|\bone_{[\lambda,\lambda+1]}(\sqrt{ \cL})\|_{\sL(L^1(M),L^2 (M))}\lesssim \lam^ { \frac  {n-1}2}\, .
\end{equation}
Let us give a few more details for the proof of \eqref{eq_thmL1}: for any function $\psi:\bR\to \bC$, 
 we have
 $$
\|\psi(\sqrt{ \cL})\|_{\sL(L^1(M),L^2 (M))}
\leq \sup_{x\in M} |K_{\psi}(x,x)|\, ,
$$
using functional properties of the integral kernel $K_{\psi}$  of the operator $\psi(\sqrt{ \cL})$ and the spectral decomposition of $\sqrt{ \cL}$.
The estimate for the kernel for  $\psi = 1_{[\lambda,\lambda+1]} = 1_{[0,\lambda+1]}-1_{[0,\lambda[}$ may be found in \cite[Lemma 4.2.4]{Sogge2017} or can be deduced from  the point-wise sharp Weyl law in compact manifolds  \cite{SZ}. This shows \eqref{eq_thmL1}.

\smallskip 
The second idea which will complete  the proof of the theorem, 
is to interpolate  the two estimates \eqref{eq_thmL2}-\eqref{eq_thmL1} corresponding respectively to $p=2$ and  $p=1$ with the estimate for the case $p=p_{TS}$.  However in this setting, the study of the case $p=p_{TS}$ requires to first replace 
$\bone_{[\lambda,\lambda+1]}(\sqrt \cL)=\bone_{[0,1]}( \sqrt \cL -\lambda)$  with a smooth approximant~$\chi(\sqrt \cL - \lambda)$ for some  $\chi\in \cS(\bR)$, 
and then to perform a deep study of certain oscillatory integral operators. See \cite{Sogge2017} for the complete proof.  
  \end{proof}
 
 A consequence of the cluster estimates in the form \eqref{eq_thm_cluster_dual2p'} is that if $g$ is an eigenfunction for $\cL$ with eigenvalue $\mu$, then 
$g$ is also an eigenfunction for $\sqrt \cL$ but with eigenvalue $ \lambda=\sqrt \mu$, and we have
$$
\|\bone_{[\lambda,\lambda+1]}(\sqrt{ \cL}) g\|_{L^{p'}(M)} 
 =
\|g\|_{L^{p'}(M)}\leq C \lambda^{\gamma_{p,2}} \|g\|_{L^2(M)}.
$$
Consequently, 
the orthogonal projector 
 $\Pi_\mu :=\bone_{\{\mu\}} ( \cL)=\bone_{\{\sqrt \mu\}} (\sqrt \cL)$  onto the $\mu$-eigenspace for $\cL$ is bounded $L^2\to L^{p'}$ with operator norm
 $$
  \|\Pi_\mu\|_{\sL(L^2(M), L^{p'}(M))}\leq  C \mu^{\gamma_{p,2}/2}.	
$$
 By duality, we also have:
  \begin{equation}
 \label{eq_estPimu}
  \|\Pi_\mu\|_{\sL(L^{p}(M),L^2(M))}\leq  C \mu^{\gamma_{p,2}/2}.\end{equation}

\section{Restriction theorems on  Lie groups and homogeneous domains}
\label{sec_Liegroups}

Here, we discuss the restriction and cluster estimates 
 in the setting of Lie groups and homogeneous domains. 
 In the compact case, the case of a Laplace operator invariant under the action of the group is covered by C. Sogge's results on cluster estimates on compact manifold (see Theorem \ref{thm_cluster}). However, more can be said in the case of the torus (see Section \ref{des-case}) and 
a different question may be asked for instance for nilpotent Lie groups equipped with sub-Laplacians.

\subsection{A few words on the analysis on Lie groups and homogeneous domains}

Here, we consider (real, finite dimensional) Lie groups and their homogeneous domains. Recall that  by definition,  homogeneous domains are quotients of  Lie groups by  closed subgroups.

\subsubsection{First examples of Lie groups and homogeneous domains}
\label{subsubsec_exGhomdom}
The Euclidean space $\bR^n$ or the torus $\bT^n$ are naturally commutative Lie groups. 
The torus may also be viewed as a homogeneous domain since it  is the quotient $\bT^n = \bR^n / \bZ^n$.
Intuitively, 
homogeneous domains are manifolds that are highly symmetrical. Mathematically, this means that a large group acts on them. 
Examples include the unit sphere
$$
\bS^{n-1} =\left \{x=(x_1,\ldots, x_n)\in \bR^n \colon |x|^2 = x_1^2+\ldots + x_{n-1}^2+ x_n^2=1\right \}, \ n\geq 2,
$$ 
and  
the hyperbola
$$
S_{hyperbol} =\left \{x=(x_1,\ldots, x_n)\in \bR^n \colon   x_1^2+\ldots + x_{n-1}^2 - x_n^2=1\right \},\ n\geq 2.
$$ 
 The natural groups acting on them are, respectively, the groups $O(n)$ and $O(n-1,1)$ of orthogonal and Lorentz transformations.
Recall that orthogonal and Lorentz transformations are, by definition, the linear transformations preserving the quadratic forms~$|x|^2 = x_1^2+\ldots + x_{n-1}^2+ x_n^2$ and $x_1^2+\ldots + x_{n-1}^2 - x_n^2$ (respectively). 

\subsubsection{Definition and further examples}

By definition, a Lie group $G$ is  a set equipped with compatible structures as a smooth manifold and as a group. In other words, the smooth manifold $G$ is also equipped with the group operations of multiplication $(x,y)\mapsto xy$ and inverses $x\mapsto x^{-1}$ 
which are assumed to be smooth ($G\times G\to G$ and $G\to G$ respectively).

\begin{example}
The  general linear group ${\rm GL}(n,\bR)$  of  $n \times n$ invertible matrices over the reals is a Lie group. For $n>1$, it is non-commutative. 
\end{example}

In fact, any Lie group is locally isomorphic to a matrix group, that is,  a closed subgroup of ${\rm GL}(\bR,n)$ for $n$ large enough. 

\subsubsection{Types of Lie groups by examples}
Matrix groups such as the orthogonal group~${\rm O}(n)$, the unitary group, or the symplectic group together with their non-compact counterparts (e.g. $O(n-1,1)$) belong to the realm of {\it semisimple} or more generally {\it reductive} Lie groups\ccite{Hall,Helgason,varad}. They are crucial in describing  natural  homogeneous domains such as the sphere (viewed as the quotient~$O(n) / O(n-1)$ or equivalently   $SO(n) / SO(n-1)$) or the hyperbola $S_{hyperbol}$  (viewed as the quotient $O(n-1,1) / O(n-1)$). Moreover, their Lie algebras are fundamental objects in mathematical physics, especially for particle physics.

\smallskip Another important class of Lie groups consists of  the {\it solvable} Lie groups.
\begin{example}
\label{ex_ax+b}
The group of affine transformations on $\bR$ given by $x\mapsto ax+b$ with $a\in \bR\setminus\{0\}$ and $b\in \bR$ is a solvable Lie group that is not nilpotent.  
It is often called  the $AX+B$ group, and may be also realised  as the matrix group:
$$\left\{\left(
\begin{matrix}
a & b\\
0& 1	
\end{matrix}\right) \colon a\in \bR\setminus\{0\}, b\in \bR \right\}.
$$	
\end{example}

A significant subclass among solvable Lie groups consists of the {\it nilpotent} Lie groups. 
The latter are often assumed connected and simply connected without a real loss of generality \cite{CorwinGreenleaf}. In this case, they can  be described as the closed subgroups of the matrix group  
$$
\left(\begin{matrix}
1 & *&*\\
0& \ddots &*\\
0& 0&1	
\end{matrix}\right).
$$
Equivalently, they may be viewed as the group $\bR^n$ equipped with a polynomial law\ccite{Puk}.

\begin{example}
\label{ex_Hn}
	Perhaps the most famous examples of nilpotent Lie groups are the Heisenberg groups $\bH_d$, with  underlying manifold $\bR^{2d+1}$.
The three dimensional Heisenberg group may be realised as the matrix group
\begin{equation}
\label{eq_Heis_matrix}
\bH_1\sim 
\left\{
\left(\begin{matrix}
	1&a&c\\
	0&1&b\\
	0&0&1
\end{matrix}\right)
\colon a,b,c\in \bR\right\}.	
\end{equation}
Because of its relation to complex analysis, the Heisenberg group $\bH_d$ is often realised as~$\bR^{2d+1}$ equipped with the following group law:
\begin{equation}
	\label{eq_Hnlaw}
(Y,s)(Y',s') = (Y+Y', s+s'+ 2 (\langle y, \eta'\rangle -\langle y', \eta\rangle)),
\end{equation}
where $(Y,s)=(y,\eta,s)$ and $(Y',s')=(y',\eta',s')$ are in $\bR^d\times \bR^d\times \bR\sim \bR^{2d+1}$.
\end{example}

\subsubsection{Technical definitions}
\label{subsubsec_technicalDef}
Let us briefly present the technical definitions of the notions sketched above:
\begin{definition}
	A (connected) Lie group $G$ is {\it reductive, semisimple, solvable or nilpotent} when its Lie algebra $\fg$ is so. 
	
\begin{enumerate}
	\item A Lie algebra is {\it nilpotent} if any finite number $s$ of iterated Lie brackets vanishes.  In fact, there exists a smallest number $s$ valid for any  nested Lie brackets; this is called the step of nilpotency of $\fg$. 
	
	Equivalently, a Lie algebra $\fg$ is nilpotent when its lower central series eventually reaches zero. The lower central series is defined with  
	$\fg^1=\fg$ and recursively $\fg^{k+1}=[\fg,\fg^k]$ being the linear span of elements  $[V,W]$,  $V \in \fg$ and $W \in \fg^k$.
	\item  A Lie algebra is {\it solvable} when its derived series eventually reaches zero. The derived series is defined with 
	 $\fg^{(0)}=\fg$ and recursively $\fg^{(k+1)}=[\fg^{(k)},\fg^{(k)}]$ being the linear span of elements $[V,W]$, $V,W\in \fg^{(k)}$.
	 \item A Lie algebra is {\it semi-simple} when it has no-non zero abelian ideals.  
	 \item A Lie algebra is {\it reductive} when it can be written as the direct sum of its centre with a semi-simple Lie algebra. 
\end{enumerate}	 
\end{definition}

The classifications of semi-simple Lie algebras and of the homogeneous domains of the semisimple Lie groups (symmetric domains)
are well known \cite{Helgason}.
This is a beautiful and elegant theory connected to the Killing form, Dynkin diagrams, root space decompositions, the Serre presentation, the theory of highest weight representations,
the Weyl character formula for finite-dimensional representations, etc. 
By contrast, if it is possible to classify the nilpotent Lie algebra of dimension $\leq 6$, in dimension 7, it is possible to identify families of non-isomorphic Lie algebras parametrised by real numbers
\cite{Magnin}. 

Certain types of nilpotent Lie groups are relevant to control theory and sub-Riemannian geometry:
\begin{definition}
	A Lie algebra is {\it stratified} when it can be decomposed as 
	$$
	\fg = \fg_1 \oplus \ldots \oplus \fg_s 
	\quad\mbox{with}\quad [\fg_i ,\fg_j] =\fg_{i+j},
	$$
	with the convention that $\fg_k=0$ for $s>k$. 
\end{definition} 
In this case, the Lie algebra is nilpotent, and the corresponding connected simply connected Lie group is said to be stratified.
For instance, the Heisenberg group is naturally stratified.
After a choice of an inner product or a basis on  the first stratum $\fg_1$, a stratified  group is said to be {\it Carnot}.  

\subsubsection{Solvmanifolds}
The homogeneous domains for solvable and nilpotent Lie groups are called 
solvmanifolds and 
nilmanifolds respectively.

\begin{example}
The M\"obius strip is a solvmanifold as it is the quotient of the group $AX+B$ (see Example \ref{ex_ax+b}) by the subgroup formed by the transformations corresponding to $a=\pm 1$ and $b=0$.
\end{example}

\begin{example}
	The canonical three dimensional Heisenberg nilmanifold is   the matrix group in \eqref{eq_Heis_matrix}  quotiented by the matrix subgroup corresponding to integer coefficients. 
	This may be described as an $S^1$-bundle over $\bT^2$.
\end{example}

Not every nilpotent Lie group may be quotiented into a compact nilmanifold (see
\cite[Chapitre 5]{CorwinGreenleaf}
and references therein). However, it is possible on the Heisenberg group and more generally nilpotent Lie groups with two-step of nilpotency, such as in\ccite{fermfischer3}.

\subsubsection{Analysis on compact Lie groups}
Global and Harmonic analysis on 
compact (hence reductive) Lie groups  and their homogeneous domains is now well-understood \ccite{Helgason}. 
Moreover, 
many results initially obtained in these settings have proven to be generalisable to compact manifolds without an underlying group structure. 
 A notable example of this phenomenon is for instance the cluster estimates (see Theorem \ref{thm_cluster}): although it was first proved by C. Sogge for the Laplace-Beltrami operator on the sphere using techniques of harmonic analysis\ccite{Sogge1988}, Sogge has generalised the result to elliptic pseudo-differential operators\ccite{Sogge2017}.
Naturally, many sophisticated questions may still use the underlying structure of Lie groups or homogeneous domains, for instance in the case of the torus
(see Section \ref{des-case}) or  general compact Lie groups, see e.g.\ccite{fischerJFA15} or\ccite{shaoC}.
 
 \smallskip  
The global analysis of non-compact Lie groups is  more intricate for reasons explained in the next paragraph.

\subsubsection{Convolution operators}
A Lie group $ G$ is equipped with positive measures that are invariant under translation, meaning multiplication on the left or on the right by any group element. These measures are known as the (left or right) Haar measures.  
Up to a constant, a left (resp. right) Haar measure is unique. 
This leads to the definition of~$L^p$-spaces on~$G$, as well as  convolution functions and operators.

\smallskip On $\bR^n$,  the analysis of 
convolution operators
is well-understood through the theory of singular integral operators 
developed by   A.-P. Calder\'on and A. Zygmund\ccite{CZ}  in the decades around the 1950s, and also the early works of Elias Stein\ccite{steinSIO}.
This theory extends readily to operators with integral kernels (not necessarily of convolution) and can be adapted to 
other settings\ccite{coifmanweiss}, including  Lie groups and their homogeneous domains. 
However, this adaptation depends on the behaviour of the Haar measure. The growth of the Haar measure of a (connected) Lie group is either polynomial or exponential - see\ccite{guivarch}  for a precise  statement  of this property.
Examples of Lie groups with polynomial growth of the volume 
include compact and nilpotent Lie groups, while non-compact reductive Lie groups and certain solvable Lie groups such as the $AX+B$ group have exponential growth of the volume. 
 The adaptation of the Calder\'on-Zygmund theory to Lie groups has become well understood in the  polynomial growth case\ccite{alexo,DtER}, 
 while the case of exponential growth is technically challenging \cite{HebischSteger}.

\subsubsection{Invariant Laplace operators}
Other operators that are naturally considered on Lie groups are differential operators invariant under left or right translations.
To fix the ideas, let us consider invariance under left translations. 
The Lie algebra $\mathfrak g$ of the Lie group $G$ may be viewed as the Lie algebra of 
left-invariant   vector fields.

\begin{example}
\label {ex_horizvect}
The following vector fields on $\bR^{2d+1}$
$$
 \cX_j:=\partial_{y_j} +2\eta_j\partial_s\andf \Xi_j:= \partial_{\eta_j} -2y_j\partial_s, \quad j=1,\ldots, d, 
$$
are invariant under the left translations of the Heisenberg group $\bH_d$.
Together with $\partial_s$, they form the canonical basis of the  Lie algebra of  $\bH_d$.
They satisfy the following Canonical Commutation Relations mentioned in the introduction:
$$
[\cX_j, \Xi_j] = -4\partial_s, 
\qquad 
[\cX_j, \Xi_k] = 0 \ \mbox{for} \ j\neq k.
$$
(The non-zero factor $-4$ is irrelevant.)
\end{example}
 
 The underlying vector space of the Lie algebra $\mathfrak g$ of $G$ 
   is naturally isomorphic to the tangent space of $G$ at its neutral element.
Any left-invariant differential operator may be written as a non-commutative polynomial in a basis of $\mathfrak g$, in a unique way once an ordering of the basis is fixed\ccite{Helgason}.

In the case of a compact Lie group $G$, 
it is natural to consider the left-invariant Laplace operator  defined as follows:
 we  equip the Lie algebra $\mathfrak g$ with a $G$-invariant inner product, yielding a left-invariant Riemannian metric on $G$ as well as a left-invariant Laplace-Beltrami operator given by 
$-(X_1^2+\ldots +X_n^2)$ where $X_1,\ldots, X_n$ are left-invariant vector fields that form an orthonormal basis of $\mathfrak g$.
A similar construction is often considered on symmetric spaces (i.e. the suitable compact and non-compact quotients of semisimple  Lie groups, see\ccite{Helgason})  such as the  sphere $\bS^{n-1}$ and the hyperbola $S_{hyperbol}$. 

\subsubsection{Sub-Laplacians}
An important class of differential operators related to the analysis on Lie groups are sub-Laplacians. By  sub-Laplacians, here, we mean   the sum of squares~$X_1^2+\ldots +X_{n'}^2$ of  vector fields $X_1, \ldots, X_{n'}$ of a manifold $M$ that generate the entire tangent space at every point of $M$  by linear combinations of iterated nested commutators. This hypothesis  on the family of vector fields is often referred to as  the H\"ormander condition, 
as L. H\"ormander\ccite{hormander4} showed in 1967 that 
it implies the hypoellipticity of sub-Laplacians,
see also\ccite{street}.
This  was the motivation behind the research programme led by Folland and Stein and their collaborators
around the 1980s, see e.g.\ccite{Folland, RLS},
on subelliptic operators modelled via nilpotent Lie
groups, in particular on Carnot groups (see Section \ref{subsubsec_technicalDef}) which are naturally equipped with a canonical sub-Laplacian.
This is related to non-holonomic geometries, 
often called  {\it sub-Riemannian} since the 1990's, with applications  in optimal
control, image processing and biology (eg human vision), see for instance the book by 
A. Agrachev, D. Barilari, and U. Boscain\ccite{ABB19}.  
It has rich motivations and ramifications in
several parts of mathematics: the analysis of hypoelliptic PDEs and  stochastic (Malliavin) calculus but also geometric measure theory, metric group theory, etc., see e.g.\ccite{LeDonne}.

\subsubsection{$L^2$-theory}
\label{subsubsec_L2T}
The starting point of many questions in analysis is an $L^2$-decomposition, be it as a hypothesis in  singular integral theory or from Fourier analysis or as a consequence of the self-adjointness of an  operator.

\smallskip  
On $\bR^n$ and $\bT^n$, $L^2$-decompositions are obtained readily according to  the Plancherel formula   via the Euclidean Fourier transform and the Fourier series. The latter are strongly related to their group structure and can be generalised on many topological groups. 
The case of compact groups was proved by F. Peter and H. Weyl\ccite{PeterWeyl} in 1927 (see also\ccite{steinTopics}), 
and provides a decomposition of the $L^2$-space analogous to the one provided by Fourier series: the sum is over all the irreducible representation modulo equivalence.
In the 1960's, 
this was further generalised by J. Dixmier\ccite{dixmier} to a very large class of topological groups (more technically: locally compact, unimodular, type I)  and provides a Fourier transform in terms of the (unitary irreducible) representations of the group. It provides not only a decomposition of the $L^2$-space but also an understanding of Fourier multipliers, or equivalently, the operators that are invariant under (e.g. left) translations and bounded on $L^2$.
Although J. Dixmier's results are very general and abstract, 
they can be made very concrete on groups whose  representation theory is very explicit, 
such as  semisimple Lie groups via weight theory, 
and  nilpotent Lie groups with the 
orbit method\ccite{kirillov}.

\smallskip Considering operators that are self-adjoint will also provides 
an $L^2$-decomposition via their spectral decomposition $E(\lambda)$, see e.g.\ccite{RudinFA}.
This leads to  the definition of spectral multipliers, and  is of particular interest for sub-Laplacians (see e.g.\ccite{alexo}).

\subsection{Restriction theorem on the Heisenberg group}\label{Restriction-Heisenberg}
\subsubsection{The setting}
 Realising the Heisenberg group $\bH_d$ as in Example~\ref{ex_Hn} with group law given in \eqref{eq_Hnlaw}, we define the following differential operator:
\begin{equation}
	\label{eq_sublapHd}
\D_{\H_d}u:=\sum_{j=1} ^d(\cX_j^2u+\Xi_j^2u) \, ,
\end{equation}
 where the  vector fields~$\cX_j$ and~$\Xi_j$ 
 were defined in Example \ref {ex_horizvect}.
 Note that 
 this choice of vector fields on the first stratum of the Heisenberg Lie algebra equipp $\bH_d$ with its natural structure of Carnot group (see Section \ref{subsubsec_technicalDef}) for which $\D_{\H_d}$ is its canonical sub-Laplacian.
 The operator $\D_{\H_d}$ is homogeneous of order two for the anisotropic dilations \begin{equation}
\label {dil}\delta_r (Y,s) = (rY, r^2 s)\, .\end{equation}
These dilations are automorphisms of the group $\bH_d$ - unlike the isotropic ones.

\subsubsection{M\"uller's result}
\label{Muller}
The  operator $-\D_{\H_d}$ is non-negative and  essentially self-adjoint  on~$C_c^\infty(\bH_d) \subseteq L^2(\bH_d)$, 
and it is also invariant under left-translation. 
From the $L^2$-theory viewpoint (see Section\refer{subsubsec_L2T}),
this implies that its spectral decomposition $E(\lambda)$ may be expressed in terms of the Fourier decomposition of the Heisenberg group, see\ccite{fischerRuzhansky} and\ccite{bcdh,bgx}.
Consequently,~$E[\alpha,\beta]$ may be described with special functions connected to the representation theory of~$\bH_d$. 
D. M\"uller has given explicit formulae for these   in  \cite{Muller}.
In the same paper, he studies 
the restriction theorem on the sphere of the Heisenberg group: as emphasised in Section \ref{subsubsec_cP}, 
this amounts  in studying the analogue of the operator $2\cP$  
formally defined as 
$$
2\cP =  \partial_{\lambda=1} E[0,\lambda].
$$
In particular, he shows that this operator is a  convolution operator whose convolution kernel is a tempered distribution  formally given by 
\beq
\label {defG} G(Y,s)=   \frac {2^d} {\pi^{d+1}}  \sum_{m\in\N^d} \frac1{(2|m|+d)^{d+1}} \cos\Big(\frac{s}{2|m|+d}\Big) \cW\Big(m,m,1,\frac{Y}{\sqrt{2|m|+d}}\Big)\, , \eeq
with $Y=(y, \eta)\in \R^d\times \R^d$ and
where $\cW$ denotes the Wigner transform of the (renormalized) Hermite functions  
 \beq
\label {defW}
\cW(m, m, \lam,Y)
:=\int_{\R^d} e^{2i\lam\langle \eta,z\rangle} H_{m,\lam}(y+z) H_{m,\lam} (-y+z)\,dz\,.
 \eeq
Here~$H_{m,\lam}$ stands for the renormalized Hermite function on~$\R^d$, namely (for further details, see \cite{GS, MOS1966}) $$H_{m,\lam} (x):= |\lam|^{\frac d 4} H_m(|\lam|^{\frac 12} x)\,,$$ with~$ \suite H m {\N^d}$ the Hermite orthonormal basis of~$L^2(\R^d)$ given by the eigenfunctions of the harmonic oscillator:
$$
-(\D -|x|^2) H_m= (2|m|+d) H_m\, ,
$$
specifically \beq
\label {hermite}
H_m:=  \Bigl(\frac 1 {2^{|m|} m!}\Bigr) ^{\frac 12} \prod_{j=1}^d  \big(-\partial_j H_0+ x_jH_0\big)^{m_j}  \, ,
\eeq
with $H_0(x):= \pi^{-\frac d 4} e^{-\frac {|x|^2} 2}$,  $m!:= m_1!\dotsm m_d!\,$ and $\,|m|:= m_1+\cdots+m_d.$

\medskip The formula in \eqref{defG} should be compared with its analogue $\kappa$ on $\bR^n$ given in \eqref{eq_kappa}.
From~\eqref{defG}, we observe that the behaviour in $s$ is almost transport-like. In fact, D. M\"uller in  the proof of \cite[Proposition 3.1]{Muller} constructs a Schwartz function $f\in \cS(\bH_d)$ such that 
$$
2\cP f (Y,s)= e^{-\frac{|Y|^2}d} \cos \frac s d.
$$
This is an obstruction to any $L^p\to L^q$-boundedness for $\cP$. 
It  is also related to the Bahouri-G\'erard-Xu counter-example to the Schr\"odinger propagation for $\D_{\bH_d}$, see  page 96  in \cite{bgx}.
However, D. M\"uller proves also the boundedness of $\cP$ 
for the anisotropic Lebesgue  spaces
$$
L^p_YL^q_s=L^p(\bR^{2d}\colon L^q(\bR)),
$$
namely the boundedness $L^p_Y L^1_s \to L^{p'}_Y L^\infty_s$:
$$
\|\cP f\|_{L^{p'}_Y L^\infty_s} \leq C \|f\|_{L^{p}_Y L^1_s} , 
\qquad 1\leq p <2.
$$
This has been re-interpreted in terms of a Fourier restriction theorem for the group Fourier transform of the Heisenberg group\ccite{BBG}, then adapted to suitable hyper-surfaces in the setting of $\R \times\H_d$, see further discussion in Section\refer{Strichartz}.

\smallskip  Note  that  restriction issues has been also investigated  in a few other   sub-elliptic  frameworks, see for instance V. Casarino-P. Ciatti\ccite{CC},        H.~Liu-M.~Song\ccite{LS11, LS16} and  D.~Barilari-S.  Flynn\ccite{BF}.

%\subsection{A sample of applications and open problems}\footnote{we can move the subsections if we reorganise the file}

%\subsubsection{Relation to PDE's and Strichartz estimates}
%
%\textcolor{red}{HAJER: BGX, BBG?}
%
%\subsubsection{Number Theory Connection}
%Nilmanifolds appear in the study of arithmetic progressions and additive combinatorics, especially in works by B. Green and T. Tao, see e.g.\ccite{GreenTao2008,GreenTao2012}.
%For instance, the Green-Tao theorem states that the primes contain arbitrarily long arithmetic progressions, and its proof relies on understanding the distribution of primes in `nilsequences,' which are sequences arising from flows on nilmanifolds.
%
%
%%\footnote{ref PhD thesis under jim Wright with Heisenberg group - Marco Vitturi?}
%
%\subsubsection{Open problems}
%
%It is anticipated that cluster estimates for sub-Laplacians will not behave like their Riemannian counterparts  in Theorem~\ref{thm_cluster}, especially when sharp. 
%This intuition is based on the difference in propagation properties and Strichartz estimates for nilpotent Lie groups, especially the Heisenberg groups\footnote{ref to something already said}.
%
%A concrete  open problem is to find cluster estimates (even non-sharp)  on Heisenberg nilmanifolds for the canonical sub-Laplacians. 
%The spectral theory of these sub-Laplacians are completely explicit\ccite{DeningerSinghof}, so cluster estimates in these settings may be viewed as related to analytic number theory. 
%Other approaches (e.g. via Harmonic Analysis or Spectral Theory)
%are also possible. 
%

\section{Related issues and open questions}
\label{sec_open}

Restriction problems are at the intersection of  many areas of mathematics such as  Harmonic Analysis, Spectral and Geometry Theories,   with a broad range of applications covering  PDEs, Number Theory, Probability Theory, etc.
This subject is strongly linked with  many questions that are still largely  opened, hence giving a complete survey  to all these connexions  is beyond the scope of the present text. However, we will  present in the first part of this section some  applications of Tomas-Stein  inequalities  (in  well-known frameworks) in the fields of PDEs and   Spectral Theory.  Then, we  will close the text  by highlighting the  interplay between restriction problems and Number Theory.

\subsection{Related questions in harmnoic analysis}\label{related} 
 Fourier restriction problems are deeply connected to  two other conjectures central to Euclidean harmonic analysis,  and  known as  Kakeya and Bochner-Riesz.
 The  Kakeya conjecture states that {\it each Kakeya set in $\R^n$, that is to say a set containing a unit line in every direction, has Minkowski and  Hausdorff dimension equal to $n$}.
 This was proved by R. Davies\ccite{Davies} in the two dimensional case in 1971, and very recently   by  H. Wang and J. Zahl\ccite{WZ} in the three dimensional case;
 for insights about the Kakeya conjecture, we refer the reader to\ccite{DemeterICM, Tao, WZ}. 
 
 \smallskip  
 The Bochner-Riesz conjecture (recalled below) is a significant problem in harmonic analysis, focusing on the convergence and boundedness in $L^p$ of the Bochner-Riesz means.
 It has been proved by L. Carleson and P. Sj\"olin \cite{{CalesonSjolin}} in  the two dimensional case, 
but remains open   in higher dimensions.
  As regards to its link with  restriction problems, we refer the interested reader to the survey of  T. Tao\ccite{Tao} and the references therein.
 
 \smallskip  
 The Bochner-Riesz conjecture arose in the study of the ball multipliers at frequency $R$
  $$ 
  \bone_{B(0, R)}(D)f:=\cF^{-1} (\bone_{B(0,R)} \widehat f), \quad f\in \cS(\bR^n).
  $$
  In other words, 
$$
  \bone_{B(0, R)} (D) f (x)= \int_{B(0, R)} e^{2\pi ix\cdot \xi} \widehat f(\xi) d\xi, 
  \qquad   x\in \bR^n.   
  $$
According to Plancherel's formula, the family $\bone_{B(0, R)} (D)$ converges to $f$ in $L^2 (\R^n)$ as $R$ tends to infinity.
By contrast, the famous result of C. Fefferman\ccite{Fefferman-B} implies its  unboundedness in $L^p (\R^n)$
as soon as $n \geq 2$ and $p \neq 2$. The   failure of $L^p$ convergence of the ball multipliers has raised the same question with  the ball multipliers being replaced by the following family of  smoother Fourier multiplier operators, known as the
Bochner-Riesz multipliers:  
$$ 
S^\delta_Rf := \cF^{-1} (m_{\delta,R}(|\cdot|^2) \widehat f), \quad f\in \cS(\bR^n), 
$$
where 
$$
m_{\delta,R} (\lambda):= \left (1-\frac{\lambda}{R^2}\right )_+^\delta, 
$$
with $x_+:= \max (x, 0) $ denoting the positive part of $x$. 
In other words, 
$$
S^\delta_R f (x) =\int_{\bR^n} \left(1-\frac{|\xi|^2}{R^2}\right)_+^\delta \widehat f(\xi) e^{2\pi i x\cdot \xi} d\xi , \qquad  x\in \bR^n.
$$ 
 The  Bochner-Riesz conjecture asks {\it if    
 $\delta>0$ and   $n |\frac 1 2- \frac 1 p| - \frac 1 2 < \delta$, 
 then $S^\delta_R f$ converges to~$f$ in $L^p (\R^n)$, as $R$ tends to infinity? } Note that such condition on $p$  is known to be necessary, according to a counter example where the symbol of $S^\delta_R$ is divided into a large number of Knapp's examples, see E.-M. Stein\ccite{steinknapp}.

\smallskip  
We  observe that the Riesz means $S^\delta_R$ are radial Fourier multipliers, and can therefore be 
 equivalently described as a spectral multiplier in $-(2\pi)^{-2}\Delta$,
see \eqref{eq_mult_Fvsrad}:
$$
S^\delta_R f =m_{\delta,R}(-(2\pi)^{-2}\Delta).
$$
Hence, a similar question may be asked for any positive self-adjoint operator in other contexts.
The most natural is certainly the case of the Laplace operator on the torus, yielding to 
the Bochner-Riesz means for Fourier series.
The case of the canonical sub-Laplacians on the Heisenberg groups has also been considered \cite{mauceri,MullerBR}.

\subsection{Applications to evolution equations}\label{App}
In the present paragraph, we aim at briefly providing the different strategies  used to establish Strichartz estimates, which are another face of the Tomas-Stein  theorem.

Strichartz estimates  date back to the 70s through the founding paper of R. Strichartz \cite{strichartz}. They have become   an efficient tool for analysing many phenomena in physics, biology, fluid mechanics, general relativity, etc. 

\subsubsection{Origin of the Strichartz estimates}\label{Strichartz}
Here, we consider the Schr\"odinger equation, that have been introduced  in the context of quantum mechanics by  E.   Schr\"odinger   in 1925,   
\begin{equation}
	\label{eq_Schro}
	\left\{
\begin{array}{rcl}
\ds i\partial_t u -  \D u & =  & 0\\
{ u}{}_{|t=0} &= & u_0 \in L^2(\R^n)\, .
\end{array}
\right.
\end{equation}
Based on  standard arguments, one can easily check that the solution of the above Cauchy problem can be written as follows  
\begin{align}
\label{eq:str0}
u(t,x)&=\int_{\wh \bR^n}e^{i(x\cdot \xi + t|\xi|^{2})}\widehat{u}_{0}(\xi)d\xi \\ 
\label{eq:str0bis}&= \frac {{\rm
e}^{i \frac {|\cdot|^2} {4t} }} {(4 \pi i t)^\frac {n} 2} \star u_0\, .	
\end{align}
In \cite{strichartz}, R. Strichartz pointed out that  Formula \eqref{eq:str0} can be interpreted as the restriction of the Fourier transform on the paraboloid~$\wh S$  in the space of frequencies $\wh \R^{n+1}=\wh \R \times \wh \R^{n}$,  defined as
$$\wh S:=\big\{(\alpha,\xi)\in \wh \R\times \wh\R^{n}\mid \alpha=|\xi|^{2} \big\}\, . $$ 
Combining the $L^2\to L^{p'}$-bound formulation of the restriction theorem (see \eqref{dual}) with  scaling arguments, he deduced   the following bounds for space-time norms of  the solution~$u$ to the Schr\"odinger equation  by means of the $L^2$-norm of the initial data $u_0$: \begin{equation}\label{eq:strlambda00}
\|u\|_{L^{\frac{2n+4}{n}}(\R,
 L^{\frac{2n+4}{n}} (\R^{n}))}\leq C \|u_{0}\|_{L^{2}(\R^{n})}\, .
\end{equation}
Since then, these type   of estimates, which have  appropriate analogue results   for the wave equation and several systems involved in  fluid mechanics, were  christened with the name of Strichartz estimates.

 \subsubsection{Dispersion and Strichartz estimates}
 \label{subsubsec_dispersion_strichartz}
In mathematics, a dispersive evolution corresponds to the phenomena     that waves with different frequencies move
 at different velocities in time. For the Schr\"odinger equation in~\eqref{eq_Schro}, 
 this can be seen in 
 the Fourier representation\refeq{eq:str0} of its solution.
 Applying Young's convolution inequality to \eqref{eq:str0bis} implies that this solution satisfies the so-called {\it dispersive} estimate:
 \beq
\label {dispSR0}
\|u(t,\cdot)\|_{L^\infty(\R^{n})} \leq \frac 1 {(4\pi|t|)^{\frac n 2}} \|u_0\|_{L^1(\R^{n})}\,, \, \, \forall t \neq 0.
\eeq
  Commonly, a dispersive estimate corresponds to a pointwise inequality in time decay of the solution $u$ of an evolution PDE by means of the $L^1$-norm of the data $u_0$, namely ($t\neq 0$)
$$\|u(t,\cdot)\|_{L^\infty}\lesssim \frac{\|u_0\|_{L^1}}{|t|^r}$$
where (in general) the rate of decay $ r>0$ depends on the equation, the dimension and the setting.

\smallskip In the late 1990's, the seminal work of J. Ginibre and G. Velo\ccite{ginibrevelo} (see also M. Keel and T. Tao\ccite{keeltao}  for the endpoint) resorted  to dispersive estimates to allow for the derivation of Strichartz  estimates for a larger range of indices,  thanks to the  $TT^*$ argument recalled in Appendix\refer{sec_TT*}.
The research that has ensued in this subject  has been marked by a whole   series of dramatic results on dispersive and Strichartz estimates which are   the key to proving well-posedness results for nonlinear evolution equations.

\smallskip
We illustrate this idea with the $L^p-L^q$-Strichartz estimates for the Schr\"odinger equation. This result  contains, in particular, the first Strichartz estimates in~\eqref{eq:strlambda00}.
\begin{theorem}
\label{thm_StrichSchro}
For any  $u_0\in L^2(\R^{n})$, 
the solution $u$ to the Schr\"odinger equation in \eqref{eq_Schro} satisfies  the following family of Strichartz estimates:
\beq
\label {dispSTR}
\|u\|_{L^q(\R, L^p(\R^{n}))} \leq C(p,q)   \|u_0\|_{L^2(\R^{n})}\,;
\eeq
above $(p,q)$  satisfies the scaling admissibility condition
\beq
\label {admissibR}
\frac 2 q + \frac n p = \frac n 2  \with q \geq 2 \andf   (n,q,p) \neq (2,2, \infty) \, .
\eeq
\end{theorem}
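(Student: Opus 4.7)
The plan is to reduce to a $TT^*$ framework and combine the dispersive bound~\eqref{dispSR0} with a one-dimensional Hardy-Littlewood-Sobolev estimate in time, following the Ginibre-Velo scheme; the endpoint case $q=2$ will then require the bilinear Keel-Tao argument. Setting $Tu_0(t,x) := (e^{it\Delta}u_0)(x)$, the desired bound $T: L^2(\R^n) \to L^q(\R;L^p(\R^n))$ is equivalent, by the $TT^*$ argument recalled in Appendix~\ref{sec_TT*}, to the boundedness $L^{q'}(\R;L^{p'}(\R^n)) \to L^q(\R;L^p(\R^n))$ of
$$
TT^* F(t,\cdot) = \int_{\R} e^{i(t-s)\Delta} F(s,\cdot)\, ds,
$$
since $(e^{it\Delta})^* = e^{-it\Delta}$.

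My second step is to interpolate via Riesz-Thorin the elementary $L^2$-isometry $\|e^{it\Delta}v\|_{L^2} = \|v\|_{L^2}$ with the dispersive bound~\eqref{dispSR0}, producing for every $p \in [2,\infty]$
$$
\bigl\|e^{it\Delta}v\bigr\|_{L^p(\R^n)} \lesssim |t|^{-n(\frac12-\frac1p)} \|v\|_{L^{p'}(\R^n)}, \qquad t\neq 0.
$$
Applying this pointwise in $s$, followed by Minkowski's integral inequality in $x$, yields the scalar majorant
$$
\|TT^* F(t,\cdot)\|_{L^p(\R^n)} \lesssim \int_{\R} |t-s|^{-n(\frac12-\frac1p)} \|F(s,\cdot)\|_{L^{p'}(\R^n)}\, ds.
$$
For the non-endpoint range $q > 2$, I would conclude by applying the one-dimensional Hardy-Littlewood-Sobolev inequality to the scalar convolution by $|t|^{-\beta}$ with $\beta := n(\frac12-\frac1p)$; its boundedness $L^{q'}(\R)\to L^q(\R)$ forces $\beta = 2/q$, which rewrites precisely as the admissibility condition $\frac{2}{q}+\frac{n}{p}=\frac{n}{2}$, while the requirement $0<\beta<1$ translates into $p<\infty$ and $q>2$. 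The trivial case $q=\infty$, $p=2$ is just the $L^2$-conservation.

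The hard part is the endpoint $q=2$, $p=\frac{2n}{n-2}$ for $n\geq 3$, where the sharp HLS exponent $\beta = 1$ fails. Here I would adopt the bilinear strategy of Keel-Tao: pass to the bilinear form associated with $TT^*$, dyadically decompose $|t-s|\sim 2^j$, and derive a family of off-diagonal bilinear estimates between slightly non-admissible pairs, combining the dispersive bound with the $L^2$-conservation. A careful bilinear real interpolation between two such off-diagonal estimates with decay of opposite signs in $j$ then permits summing the dyadic pieces and closes the endpoint bound. The excluded case $(n,q,p)=(2,2,\infty)$ is exactly where $p=\infty$ would arise, which prevents the interpolation scheme from closing and reflects a genuine logarithmic divergence in dimension two.
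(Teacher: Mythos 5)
Your proposal is correct and follows essentially the same route as the paper: interpolation of the dispersive bound \eqref{dispSR0} with $L^2$-conservation, reduction via the $TT^*$ argument of Appendix~\ref{sec_TT*}, and the one-dimensional Hardy--Littlewood--Sobolev inequality in time for the non-endpoint range. Like the paper, the endpoint $q=2$ is handled only by appeal to the Keel--Tao bilinear argument \cite{keeltao}, so there is no substantive difference between the two proofs.
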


\begin{proof}[Sketch of the proof of Theorem~\ref{thm_StrichSchro}]
An argument of complex interpolation between 
the dispersive estimate in~\eqref{dispSR0} 
with the conservation of the mass
$$ 
\|u(t,\cdot)\|_{L^2(\R^{n})} =\|u_0\|_{L^2(\R^{n})}\,,
$$ 
implies, for any $2 \leq p \leq \infty$,
\beq
\label {dispSR} \|u (t,\cdot) \|_{L^{p} (\R^{n})} \lesssim |t|^ {- n(\frac  {1} 2- \frac  {1} p)}  \|u_0\|_{L^{p'}(\R^{n})}\,.
\eeq
Invoking   the $TT^*$ argument,  this   gives rise  when $u_0\in L^2(\R^{n})$ to the following family of Strichartz estimates which contains the first Strichartz estimates in~\eqref{eq:strlambda00}:
\beq
\label {dispSTR}
\|u\|_{L^q(\R, L^p(\R^{n}))} \leq C(p,q)   \|u_0\|_{L^2(\R^{n})}\,;
\eeq
above $(p,q)$  satisfies the scaling admissibility condition in \eqref{admissibR}
 Indeed, if we denote  $ u (t, \cdot)= U  (t) u_0$, then  by definition of the $L^q_{t}(L^p_{x})$-norm, we have
\begin{eqnarray*}
\|U(t)u_0 \|_{L^q(\R, L^p(\R^{n}))}
&=&\sup_{\vf\in
\cB_{q,p}}\bigl| \int_{\R} (U(t)u_0|\vf (t))_{L^2(\R^{n})} \,dt\bigr|\, ,
\end{eqnarray*}
where $\cB_{q,p}=
\bigl\{\phi\in\cD(\R^{n})\,/\,\|\phi\|_{L^{q'}(\R, L^{p'}(\R^{n}))}\leq1\bigr\}$.  
By the definition of the adjoint operator, we thus have 
\begin{eqnarray*}
\|U(t)u_0\|_{L^q(\R, L^p(\R^{n}))}  
%&= &
%\sup_{\vf\in \cB_{q,p}} \bigl|\Big(u_0\big| \int_{\R} U^\star
%(t)\vf(t, \cdot)\,dt\Big)_{L^2(\R^{n})} \bigr|\\ 
&\leq &
 \|u_0\|_{L^2(\R^n)} \sup_{\vf\in \cB_{q,p}} \Bigl\|\int_{\R} U^\star
(t)\vf(t, \cdot)\,dt\Bigr\|_{L^2(\R^n)} \, .
\end{eqnarray*}
However, we have by functional analysis and H\"older's inequality, 
\begin{eqnarray*}
\Big\|\int_{\R} U^\star(t) \vf(t, \cdot)\,dt \Big\|_{L^2(\R^n)} ^2  
 &   = &  \int_{\R^2}\bigl\langle
U(t-t')\vf(t', \cdot), \overline \vf (t, \cdot)\bigr\rangle \,dt'\,dt\\ &\leq& \int_{\R^2} \|U(t-t')\vf(t', \cdot)\|_{ L^{p}(\R^n)} \|\vf (t,\cdot)\|_{ L^{p'}(\R^n)}\,dt'\,dt\, ,
\end{eqnarray*}
which implies thanks to the  dispersive estimate\refeq{dispSR}
 $$ \Big\|\int_{\R} U^\star(t) \vf(t, \cdot)\,dt \Big\|_{L^2(\R^n)}^2 \leq \int_{\R^2} \frac 1 {|t-t'|^ { n(\frac  {1} 2- \frac  {1} p)}} \|\vf(t',\cdot)\|_{ L^{p'}(\R^n)} \|\vf (t,\cdot)\|_{ L^{p'}(\R^n)}\,dt'\,dt \,. $$
 This completes  the proof of  the result by Hardy-Littlewood-Sobolev inequality.  	
\end{proof}

  \medskip In fact, there is a plethora of Strichartz estimates expressed in  Lebesgue spaces as well as in Sobolev  spaces or more  generally in Besov spaces,  in homogeneous or inhomogeneous settings, whether for evolution equations with constant coefficients or   rough  variable coefficients.  For a brief introduction to this topic, we refer the reader to  the text\ccite{bahouri lp} and the references therein.  
  
    \smallskip In the particular case of  the Schr\"odinger equation in \eqref{eq_Schro}, combined with   a scaling argument,  the method of proof outlined above provides   a  gain of one half derivative with respect to the Sobolev embedding~$H^s(\R^n)\hookrightarrow L^\infty(\R^n)$, $s> n/2$, where $H^s(\R^n)$ stands for the inhomogeneous Sobolev space of regularity index $s$. As a result of this fact, nonlinear Schr\"odinger  equations can be solved for initial data less regular than what is required by the energy method. 
 
\smallskip To address the quasilinear framework, Strichartz   estimates for evolution equations  with variable (rough) coefficients have been also extensively studied: it is the combination of geometrical optics and harmonic analysis  that allows us to derive in that (more involved) framework   Strichartz estimates, with some loss compared to the constant coefficient case.   In  this context, the heart of the matter consists in constructing a 
a   microlocal parametrix, that is to say an approximation of $u_q$,  the part of the solution relating to frequencies of size~$2^q$, which  solves  a regular evolution equation (thanks to paradifferential calculus, see\ccite{BCD1}) but only on a small time interval whose size depends on the frequency. Families of dispersive estimates can then be established for    small time, leading to 
 microlocal Strichartz estimates by the $TT^*$ argument. 
 The local (in time) Strichartz estimate are then obtained by gluing the microlocal  estimates. We refer the interested reader to    H. Bahouri and J.-Y. Chemin\ccite{bch}, N. Burq,  P. G\'erard and N.  Tzvetkov \cite{bgt}, and  D. Tataru\ccite{Tataru}.

  \subsubsection{Evolution with lack of dispersion}
  Dispersion may not hold  or can be weak, such as for the wave equation on $\H_d$,  where   dispersive estimates have been established in\ccite{bgx} with an optimal rate of decay of order $|t|^{- \frac 1 2}$, regardless of the dimension~$d$. This is the case for instance, on compact Riemannian manifolds and on some bounded domains. 
  The Euclidean strategy described  in Section \ref{subsubsec_dispersion_strichartz}  breaks down, and then  obtaining Strichartz estimates is  considered a very difficult problem. 
  Other general approches can be used (with a possible loss of derivatives), such as the estimates  of resolvents  (see for instance\ccite{lebeau}),   the  microlocal analysis  as explained above\ccite{bch, bgt, Tataru}, or the use of adapted restriction results in the spirit of the pioneering work of R. Strichartz\ccite{strichartz}.

   \smallskip
  
    A model setting   for totally  non-dispersive evolution equations is  the Heisenberg group~$\H_d$ (see Example~\ref{ex_Hn}), 
   and more precisely the Schr\"odinger  equation for its canonical sub-Laplacian defined in~\eqref{eq_sublapHd}:
\begin{equation}
	\label{eq_SchroHeis}
	\quad \left\{
\begin{array}{c}
i\partial_t u -\D_{\H_d} u = f\\
u_{|t=0} = u_0\,,
\end{array}
\right.
\end{equation}
 Indeed,  
  in\ccite{bgx},   the authors  exhibited  examples   of Cauchy data $u_0$ for which the Heisenberg Schr\"odinger equation in \eqref{eq_SchroHeis}
  behaves as  a transport equation along the variable $s$ (called the vertical variable), in the sense that $$
u(t,Y,s) = u_0(Y,s+4td)\, .
$$  More generally,    it was emphasised in H. Bahouri, D. Barilari and I. Gallagher\ccite{BBG} that the Heisenberg Schr\"odinger equation in \eqref{eq_SchroHeis} can be interpreted   as  a superposition of transport equations $\pm T_{2\ell +d}$ along  the vertical direction, with velocity~$\pm4(2\ell +d)$,~$\ell \in \N$. 
  
\smallskip   Although the Schr\"odinger equation on the Heisenberg group in~\eqref{eq_SchroHeis} is considered totally non-dispersive,   the authors  in\ccite{BBG}  obtained a restriction result in the setting of~$\R \times \H_d$   for an adapted hyper-surface to the Schr\"odinger equation~\eqref{eq_SchroHeis},  that can be interpreted as the  parabola
 $S_{par}$ in the setting of $\R^n$; they also obtained appropriate analogue results   for the wave equation on $\H_d$. 
 They were  inspired by M\"uller's restriction result \cite{Muller}, and in particular took advantage of  Formula \eqref{defG}.
 Following Strichartz' approach\ccite{strichartz}, they  then established the following weak Strichartz estimate which shows the distinguished role of the vertical direction: 
\beq
\label{dispSTH}
\|u\|_{L^\infty_s L^{q}_t L^{p}_{Y}} \lesssim_{p, q}  \|u_0\|_{H^{ \frac {2d+2} 2 - \frac2q-\frac{2d}p}(\H_{d})}  \,, \eeq 
for all $(p,q)\in [2,\infty]^2$ such that $p\leq q$ and $ \frac2q+\frac{2d}p\leq \frac {2d+2} 2$. 
Here,  $H^{s}(\H_{d})$ denotes the Sobolev space on $\H_{d}$ of regularity index $s$; it  can be defined by several ways,   for instance via the functional calculus of $-\D_{\H_d}$ (see for instance\ccite{BBG, RLS} and the references therein).  In comparison with  \eqref{dispSTR}, one can easily check that the following weighted estimate holds  
\beq
\label{dispSTHbis}
\| \left \langle s \right \rangle^{- \alpha} u\|_{L^2(\R, L^2(\H_{d}))} \lesssim_\alpha  \|u_0\|_{L^2(\H_{d})}  \,, \eeq 
 for all $\alpha \geq \frac 1 2$, where~$\langle s  \rangle:=\sqrt {1+|s|^2}$. 
 
 \smallskip 
The restriction problem amounts to investigate an operator $\cP$ defined for instance spectrally as in Lemma \ref{lem_cP}. This operator may be further described  with the group Fourier transform (see Section\refer{subsubsec_L2T}) or/and via  its convolution kernel in terms of special functions.  To our knowledge, the first result in that  direction was achieved by D. M\"uller\ccite{Muller} on the Heisenberg group~$\bH_d$ in 1990.  M\"uller's result  created a surprise, since  while stressing that   there are no non-trivial solutions in the~$L^p$-spaces for $p>1$,  it provided a positive answer  in anisotropic Lebesgue spaces, see Section~\refer{sec_Liegroups}. 

    \smallskip Contrary to the Euclidean setting,  knowing restriction's theorems on~$\bH_d$ does not translate straightforwardly into results of the same type in  $\R\times \H_{d}$, which is of course not equal to~$\H_{d'}$, for some $d'$. Hence,  to get restriction results in the setting of~$\R \times \H_d$,  one   needs  to adapt the results pertaining to $\H_{d}$.

   \smallskip  Let us end this section by pointing out that, based on the restriction result of V. Casarino and P. Ciatti\ccite{CC},  D.  Barilari and  S.  Flynn\ccite{BF}  proved Strichartz estimates for the wave and Schr\"odinger  equations  for any $H$-type group, which are  examples of nilpotent Lie groups with two-step of nilpotency.

\subsubsection{Kato-smoothing effect} \label{smoothing}
 Variations  of the Tomas-Stein  estimate
\eqref{eq:estimesphere}, including refined restriction results or trace theorems, come also into play in the description  of  solutions of some   evolution equations. These estimates, known as Kato-smoothing properties,    have been  discovered independently by  A.-V. Faminskii and  S.-N. Kruzhkov\ccite{FK} and T. Kato\ccite{Kato1}  
 for the  Korteweg-de Vries  equation, then extended by P. Constantin and J.-C. Saut~\cite{Saut} to a large number of dispersive equations and systems on $\R^n$, see also \cite{BK,Kato}.  The Kato-smoothing effect for the Schr\"odinger equation on $\R^{n}$ writes  as follows:
 \begin{theorem}
 \label{thm_Kato}
For any  $u_0\in L^2(\R^{n})$, 
the solution $u$ to the Schr\"odinger equation in \eqref{eq_Schro} satisfies  the following Kato-smoothign estimate:
\begin{equation}\label{e1}
\|\left \langle x\right \rangle^{-1} \left \langle D_{x}\right \rangle^{\frac{1}{2}} u\|_{L^{2}(\mathbb{R}_{t}\times\mathbb{R}^{n})}\lesssim
\|u_{0}\|_{L^{2}(\mathbb{R}^{n})}, \end{equation}
   where~$\langle \cdot   \rangle:=\sqrt {1+|\cdot |^2}$.
 \end{theorem}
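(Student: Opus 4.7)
The proof combines Plancherel in the time variable with a weighted $L^2$-extension estimate on the unit sphere of $\wh\bR^n$, in the spectral spirit of Section~\ref{sec_spectral}.

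\emph{Step 1 (Plancherel in $t$).} Writing $u(t,\cdot)=e^{it\Delta}u_0$ and using the spectral decomposition $-\Delta=\int_0^\infty\lambda\,dE_\lambda$, Plancherel in $t$ (Stone's formula for the spectral measure) yields
$$\|\langle x\rangle^{-1}\langle D_x\rangle^{1/2}u\|^2_{L^2(\bR_t\times\bR^n)}=2\pi\int_0^\infty\langle\sqrt\lambda\rangle\,\|\langle x\rangle^{-1}\tfrac{dE_\lambda}{d\lambda}u_0\|^2_{L^2(\bR^n)}\,d\lambda,$$
since the Fourier transform of $\tfrac{dE_\lambda}{d\lambda}u_0$ lives on the sphere $\{|\xi|=\sqrt\lambda\}$, on which $\langle D_x\rangle^{1/2}$ acts as multiplication by $\langle\sqrt\lambda\rangle^{1/2}$. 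A polar-coordinate computation yields the closed form
$$\tfrac{dE_\lambda}{d\lambda}u_0(x)=c_n\,\lambda^{(n-2)/2}\int_{\wh\bS^{n-1}}e^{i\sqrt\lambda\,x\cdot\omega}\hat u_0(\sqrt\lambda\,\omega)\,d\omega,$$
identifying, up to the dilation $\sqrt\lambda$, the spectral density with the extension of $\hat u_0|_{\sqrt\lambda\wh\bS^{n-1}}$.

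\emph{Step 2 (Scaling to the unit sphere).} Denote by $T$ the unit-sphere extension operator $Tg(y):=\int_{\wh\bS^{n-1}}e^{iy\cdot\omega}g(\omega)\,d\omega$ and set $g_\lambda:=\hat u_0(\sqrt\lambda\,\cdot)$. Rescaling $y=\sqrt\lambda\,x$ and invoking the polar-coordinate identity $\|u_0\|^2_{L^2}=\frac12\int_0^\infty\lambda^{(n-2)/2}\|g_\lambda\|^2_{L^2(\wh\bS^{n-1})}\,d\lambda$ reduces \eqref{e1} to the uniform scale-invariant weighted bound
\begin{equation}\label{eq_planweighted}
\int_{\bR^n}\frac{|Tg(y)|^2}{\lambda+|y|^2}\,dy\lesssim \lambda^{-1/2}\|g\|^2_{L^2(\wh\bS^{n-1})},\qquad \lambda\geq 1,
\end{equation}
together with a uniform (in $\lambda$) bound for $\lambda\in[0,1]$.

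\emph{Step 3 (Weighted extension bound).} The heart of the proof is \eqref{eq_planweighted}. By the $TT^*$ argument of Appendix~\ref{sec_TT*}, this is equivalent to the operator-norm estimate $\|T^*(\lambda+|\cdot|^2)^{-1}T\|_{\sL(L^2(\wh\bS^{n-1}))}\lesssim \lambda^{-1/2}$, whose integral kernel on $\wh\bS^{n-1}\times\wh\bS^{n-1}$ is the Fourier transform of $(\lambda+|y|^2)^{-1}$ evaluated at the difference $\omega-\omega'$---an explicit modified Bessel function exponentially decaying in $\sqrt\lambda\,|\omega-\omega'|$---, to which a Schur test applies and yields the desired bound. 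Equivalently, \eqref{eq_planweighted} is a reformulation of Agmon's limiting absorption principle $\|\langle x\rangle^{-1}(-\Delta-\lambda-i0)^{-1}\langle x\rangle^{-1}\|_{\sL(L^2(\bR^n))}\lesssim\langle\lambda\rangle^{-1/2}$ for the free Laplacian. The main obstacle lies in this sharp $\lambda^{-1/2}$-gain: it precisely compensates the half-derivative in $\langle D_x\rangle^{1/2}$ and, at the level of the spectral density, encodes the Kato smoothing phenomenon.
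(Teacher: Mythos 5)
You reach the correct result, and after unwinding your Step 2 your key estimate is the paper's key estimate in disguise: the weighted extension bound $\int_{\bR^n}|Tg(y)|^2(\lambda+|y|^2)^{-1}\,dy\lesssim\min(1,\lambda^{-1/2})\,\|g\|_{L^2(\wh\bS^{n-1})}^2$ is exactly the dual (extension) form, rescaled to the unit sphere, of the weighted restriction estimate \eqref{3.1} with $r=\sqrt\lambda$. The difference lies in how it is reached and proved. The paper (following Ben-Artzi--Klainerman) first dualises \eqref{e1} against a test function $v$, expands the spectral measure in polar coordinates and uses Cauchy--Schwarz in the spectral parameter together with Plancherel in time, and proves \eqref{3.1} by Sobolev trace theorems (basic trace theorem for $r\geq1$; scaling, Hardy and the $\dot H^1$ trace bound for $r\leq1$). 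You instead apply Plancherel in $t$ directly to the solution and prove the key bound by a $TT^*$ reduction to the kernel $\wh{(\lambda+|\cdot|^2)^{-1}}(\omega-\omega')$ plus a Schur test, equivalently by invoking the free limiting absorption principle. Both are legitimate: your route makes the connection with resolvent estimates explicit and is self-contained at the level of explicit kernels, while the paper's is softer (no special functions) and its duality formulation avoids manipulating the spectral density $\frac{dE_\lambda}{d\lambda}u_0$, which is not an $L^2$ function; your Step 1 identity should be justified by a density or duality argument of precisely the paper's type. One precision on Step 3: the kernel equals $c_n\lambda^{n/2-1}G\bigl(\sqrt\lambda(\omega-\omega')\bigr)$ with $G$ the Bessel--Macdonald kernel, which besides the exponential decay has an integrable diagonal singularity $\sim|\omega-\omega'|^{-(n-2)}$ (logarithmic for $n=2$); taking it into account, the Schur integral is still $O(\lambda^{-1/2})$ for $\lambda\geq1$, so your claimed bound is correct.

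The one point you assert without proof is the uniform bound on the low-frequency range $\lambda\in(0,1]$. For $n\geq3$ it follows from the same kernel computation (the Schur integral is then $O(1)$), but for $n=2$ it fails: taking $g\equiv1$, the region $|y|\lesssim1$ alone gives $\int_{\bR^2}|Tg(y)|^2(\lambda+|y|^2)^{-1}dy\gtrsim\log(1/\lambda)$ as $\lambda\to0$, reflecting the fact that the smoothing estimate with the weight $\langle x\rangle^{-1}$, like the estimate \eqref{3.1} for small $r$, is a genuinely $n\geq3$ statement (Ben-Artzi--Klainerman state it in that range). The paper's sketch carries the same implicit restriction through its use of Hardy's inequality and of $\|\cF f\|_{H^1(\hat B(0,1))}\lesssim\|\cF f\|_{\dot H^1}$ for $r\leq1$, so this is not a defect specific to your approach, but you should state the restriction and include the (one-line) low-frequency case of the Schur computation rather than leave it as an assertion.
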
  
   
\smallskip   The  estimate in \eqref{e1} tells us that the solution $u(t,\cdot)$ is, for almost $t$, locally~$1/2$ derivative smoother than the initial data~$u_{0}$. This regularization effect is different from the one displayed by Strichartz estimates, which as mentioned above can be only interpreted with respect to the Sobolev embedding: the estimate \eqref{e1} showcases the effective gain of the one half derivative.  Needless to say,  the  local character of Kato-smoothing effect is essential: indeed  due to the conservation of the $H^{s}$-norms along the flow of the Schr\"{o}dinger  equation, namely
\begin{equation}\label{us}
u_{0}\in H^{s}(\mathbb{R}^{n})\Leftrightarrow u(t,\cdot)=e^{-it\Delta}u_{0}\in H^{s}(\mathbb{R}^{n}),~\forall t\in\mathbb{R}\, ,
\end{equation}
a global smoothing effect is excluded in Sobolev spaces. 

\smallskip
The strategy of the proof of estimates  of type~(\ref{e1})  depends on the setting:   equations with constant   or variable coefficients,   on the whole space $\R^n$    or in exterior domains, etc.  For further details, one can consult  the monograph of L. Robbiano\ccite{RobLN}.  
Here we sketch the idea of  M. Ben-Artzi and S.  Klainerman  \cite{BK}.

\begin{proof}[Sketch of the proof of Theorem \ref{thm_Kato} following \cite{BK}]
The starting point is the following variant of Tomas-Stein restriction estimate 
\begin{equation}\label{3.1}
\|\mathcal{F}(f)|_{\wh\bS^{n-1}(r)}\|_{L^2 (\sigma_{\wh\bS^{n-1}} (r))}\lesssim \min{(r^{\frac{1}{2}},1)}\|\langle x  \rangle f\|_{L^2(\R^n)}\, ,
\end{equation}
where $\wh\bS^{n-1}(r)$ denotes the sphere centered at the origin and  of radius $r$ in  $\wh \bR^{n}$.    
 Its proof is mainly based on classical trace theorems in the framework of Sobolev spaces:  for $r \geq 1$, it is deduced   straightaway from the basic trace theorem,  while for  $r\leq 1$ it follows  from a combination of scaling arguments with the Hardy inequality and the application  of the following trace theorem on $\wh\bS^{n-1}$ 
$$ \begin{aligned}  & \|\mathcal{F}(f)|_{\wh\bS^{n-1}}\|_{L^2 (\sigma_{\wh\bS^{n-1}} )}\lesssim \|\mathcal{F}  (f)\|^2_{ H^1 (\hat B(0,1))}\lesssim \|\mathcal{F}  (f)\|^2_{ \dot H^1 (\hat \R^n)}\, \cdot \end{aligned}.$$
For further details, see the monograph of L. Robbiano\ccite{RobLN}.

With \eqref{3.1} at hand, the  idea  of  M. Ben-Artzi and S.  Klainerman  \cite{BK} consists first in reducing  the proof of the local smoothing property \eqref{e1}  by duality arguments to establish
\begin{equation}\label{goal_1}
\left|((\rm Id-\Delta)^{\frac{1}{4}}u|v)_{L^{2}(\mathbb{R}_{t}\times \mathbb{R}^{n})}\right|\le C\|u_{0}\|_{L^{2}(\mathbb{R}^{n})}\|\left\langle x\right\rangle v\|_{L^{2}(\mathbb{R}_{t}\times \mathbb{R}^{n})},
\end{equation}
for any test function $v(t,x)$.
Then, as  the spectral decomposition of the (self-adjoint extension of the) rescaled Laplace operator given by \eqref{eq_spectral}
 reads according to Fourier-Plancherel formula, for all~$f, g $ in~$\cS(\R^n)$,  
 \begin{align*}
(- (2\pi)^{-2}\Delta f | g)_{L^2(\bR^n)} &
=\int_{\wh \bR^{n}}
|\xi|^2 \cF f(\xi) \ \overline{\cF g}(\xi) \ d\xi
\\&=\int^\infty_{0}
r^2\biggl(\int_{\wh \bS^{n-1}} 
\cF f(r\omega) \ \overline{\cF g}(r\omega) 
d\sigma_{\wh\bS^{n-1}}  (\omega)\biggr)\, r^{n-1}dr\, .
\end{align*}
Therefore 
\begin{equation}\label{AA1}
(- (2\pi)^{-2}\Delta f | g)_{L^2(\bR^n)} = \int_{0}^{\infty} \mu (A(\mu) f | g)_{L^2(\bR^n)} d\mu\, ,\end{equation}
with  
\begin{equation}\label{AA}
(A(\mu)f|g)_{L^2(\bR^n)}=
\frac{1}{\sqrt{\mu}}( \mathcal{F}(f),  \mathcal{F}(g))_{L^2 (\sigma_{\wh\bS^{n-1}} ({\sqrt{\mu}}))}\,.
\end{equation}
Taking advantage of the  basic properties of the spectral measure of~$-\Delta$ on~$\mathbb{R}^{n}$,  one can then deduce that  the following bound holds
 \begin{align*}
|( (\rm Id- (2\pi)^{-2}\Delta)^{1/4} u|v)_{L^{2}(\mathbb{R}_{t}\times \mathbb{R}^{n})}| &\leq   \|u_{0}\|_{L^{2}(\mathbb{R}^{n})}\sqrt{\int_{0}^{\infty} \sqrt{1+\mu}\, (A(\mu) \tilde v (\mu, \cdot)| \tilde v (\mu, \cdot)) d\mu}\, .
\end{align*}
With the restriction estimate \eqref{3.1} and   Fourier-Plancherel formula with   respect to time, 
the conclusion follows.
\end{proof}

 \subsection{Some generalisations of Tomas-Stein estimates}\label{generalisations}
When functional inequalities are at hand, it is important to study their invariance by scaling, oscillations and translations,   etc,
as this often leads to their refinement. Functional inequalities were  one of Brezis'  predilection topics, where   he made deep and influential  contributions that have been at the origin of a large number of research projects, see for instance \cite{Bbook, Brezis, Bgal,  Blieb, Bmiro}. 

\subsubsection{Sobolev inequalities in Lebesgue spaces}
Among the most famous functional inequalities, we can mention the Sobolev inequalities in Lebesgue spaces: 
\begin{equation}
\label{inegsobolev} \dot H^s(\R^n)\hookrightarrow L^{p}(\R^n) \, ,\end{equation}
where $\dot H^s(\R^n)$ stands for the homogeneous Sobolev space of regularity index $s$,    $\ds 0\leq s<n/2$ and $\ds p=2n/(n-2s)$. Those inequalities are  invariant by translation and scaling, namely~$u_\lam:= u(\lam \cdot)$, but they are  not invariant 
by oscillations, that is, by multiplication by oscillating functions, namely by those of the type
$
u_\e(x) = e^{i\frac {(x|\omega)} \e} \vf(x),
$ 
where $\omega$ is a unit vector of $\R^n$, and $\vf$ is a function in~$\cS(\R^n)$. In 
\cite{gerardmeyeroru}, the authors P. G\'erard, Y. Meyer and F. Oru   refined\refeq{inegsobolev} as follows:
\begin{equation}
\label{inegrefined} 
\|u\|_{L^p(\R^n)}\lesssim \|u\|_{\dot B_{\infty, \infty}^{s-\frac n 2}(\R^n)}^{1-\frac 2 p}
\|u\|_{\dot H^s(\R^n)}^{\frac 2 p}\, ,\end{equation}
where $\dot B_{\infty, \infty}^{s-\frac n 2}(\R^n)$ denotes the homogeneous Besov space of regularity index $s-\frac n 2$. 
Hence, they have obtained a sharp inequality invariant under scaling, oscillations and translations. 
Recall that Besov spaces (which are in some sense generalisations of Sobolev spaces) can be defined in several ways (see for instance \cite{bahouri lp} for an overview of these spaces). In particular, for $\sigma >0$ and~$\psi\in \cS(\R^n)$ with compactly supported Fourier transform $\widehat \psi$, identically equal to $1$ near the origin, and  taking its values in $ [0, 1]$, one has (independently of $ \psi$)
$$\|u\|_{\dot B_{\infty, \infty}^{-\sigma}(\R^n)}:= \sup_{A>0} A^{n-\sigma} \|\psi (A \cdot ) \star u\|_{L^\infty(\R^n)}=\sup_{A>0} A^{-\sigma} \|\cF^{-1} (\widehat \psi ((A^{-1}  \cdot ) \widehat u)\|_{L^\infty(\R^n)}\,.$$
The refined estimate \eqref{inegrefined}  is one of the key arguments in \cite{Ge2},  where P. G\'erard
gave a characterisation of the defect of compactness of the critical Sobolev embedding \eqref{inegsobolev} by means 
of the profile decompositions. This characterisation can be formulated in the following terms: a bounded sequence $(u_n)_{n\geq 0}$  in $\dot H^{s}(\R^n)$
can be decomposed (up to a subsequence extraction) according to
\beq
\label{profdec}
u_n=\sum_{l=1}^L h_{l,n}^{s-n/p}\phi^l(\frac {\cdot - x_{l,n}}{h_{l,n}}) + r_{n,L}\,, \, \, \lim_{L\to +\infty}(\limsup_{n\to +\infty}\|r_{n,L}\|_{L^{p}})=0\,,
\eeq
where $(\phi^l)_{l\geq 1}$ is a family of functions
in $\dot H^{s}(\R^n)$ and where the cores $(x_{l,n})_{n\in \N}$ satisfy,
 for all~$k\neq l$,
$$
|\log (h_{l,n}/h_{k,n})| \to +\infty
\;\;{\rm or}
\;\;
|x_{l,n}-x_{k,n}|/h_{l,n}\to +\infty,
\;\;{\rm as}\;\; n\to +\infty.
$$
In short, the decomposition \eqref{profdec} shows that translational and scaling invariance
are the only features responsible for the defect of compactness of the Sobolev embedding \eqref{inegsobolev}.

\smallskip

Nonlinear analysis   progressed substantially
 in the last decades due to profile decomposition techniques that  originated  from the work of H. Br\'ezis and J.-M. Coron\ccite{BC} motivated by geometric problems. As can be seen from the extensive literature on the subject, these techniques are now essential  in PDEs in the contexts of both elliptic and evolutionary equations. A vast  literature  on the subject has been growing: among others, let us mention  
 H. Bahouri and G. Perelman\ccite{BaPe},  M. Del Pino, F.  Mahmoudi and M.  Musso\ccite{delPino},  J. Jendrej and  A. Lawrie\ccite{JL}, E. Lenzmann and  M. Lewin\ccite{LL},  F. Merle and L. Vega\ccite{merlevega},    
C.-E. Kenig and  F. Merle\ccite{km}, and the references therein. 
  Let us also emphasise that from the profile decompositions\refeq{profdec},  one can  recover that the best Sobolev constant in \eqref{inegsobolev} is reached,  as well as the structure of minimising sequences (see the founding paper of P.-L. Lions\ccite{Lions}).

\medskip  The question of optimal  constants and the existence of extremisers  in functional inequalities such as Sobolev embedding or   Gagliardo-Nirenberg inequalities   is a long standing problem that has been the subject of many papers for the sake of geometric problems, or physical studies in many settings, but remains a topical issue. This kind of analysis can be carried out by variational analysis,  optimal transport methods (see for instance\ccite{CNV}) or concentration compactness arguments as mentioned above.

\subsubsection{Case of the Heisenberg group}
 In\ccite{Benameur}, J. Ben Ameur obtained, in the framework of the Heisenberg group, an appropriate analogue profile decomposition to\refeq{profdec}. J. Ben Ameur's result characterises  the defect of compactness of  the  Sobolev embedding: 
\begin{equation}
\label{inegsobolevH} \dot H^s(\bH_d)\hookrightarrow L^{p}(\bH_d) \, ,\end{equation} where $\dot H^s(\bH_d)$ denotes   the homogeneous Sobolev space on $\bH_d$ of regularity index $s$, with~$\ds 0\leq s<Q/2$ and $\ds p=2Q/(Q-2s)$, and where  $Q:=2d+2$ stands for the homogeneous dimension of $\bH_d$ given by  the anisotropic dilations\refeq{dil}. Based on this decomposition,   L. Gassot\ccite{gassot} showed that the best Sobolev constant is attained in the setting  of~$\bH_d$, and also  constructed families of  traveling waves for the cubic Schr\"odinger equation on  the Heisenberg group.

\subsubsection{Refined Tomas-Stein estimates} Concerning the Tomas-Stein inequality, similar studies have been conducted. On the one hand,  the literature includes   several  refinements of the functional inequality \eqref{eq:estimesphere}: among others, one can cite  R.-L. Frank, E. Lieb and J. Sabin\ccite{Sabin},    R. Killip and M. Visan\ccite{killipvisan},  A. Moyua, A. Vargas and L. Vega\ccite{MVV},   D. Oliveira e Silva\ccite{OS}, S. Shao  \cite{Shao}, T. Tao\ccite{Tao2}, etc.  
In particular, one has the following refined estimate to be compared with \eqref{inegrefined}:  there exists a positive constant $C$,  such that for all~$g \in L^{2}(\wh \bS^{n-1}, d\sigma_{\wh \bS^{n-1}})$,
\beq
\label{refined} \|\cF^{-1} (gd\sigma_{\wh \bS^{n-1}})\|_{L^{\frac {2n+2}{n-1}} (\R^n)}\lesssim  \big(\sup_{Q \in D} |Q|^ {- \frac 1 2} \|\cF^{-1} (1_{Q} g d\sigma_{\wh \bS^{n-1}})\|_{L^\infty (\R^n)}\big)^\theta \|g\|^{1-\theta}_{L^{2}(d\sigma_{\wh \bS^{n-1}})}, \eeq
for  some~$0<\theta<1$, where $D$ denotes a family of dyadic cubes covering $\wh \bS^{n-1}$.  Note that\refeq{refined} implies the standard Tomas-Stein  inequality: \beq
\label{refinedbis} \|\cF^{-1} (gd\sigma_{\wh \bS^{n-1}})\|_{L^{\frac {2n+2}{n-1}} (\R^n)}\lesssim    \|g\|_{L^{2}(d\sigma_{\wh \bS^{n-1}})}. \eeq Indeed, for any $Q \in D$, we have by Riemann-Lebesgue theorem
$$\begin{aligned} |Q|^ {- \frac 1 2} \|\cF^{-1} (1_{Q} g d\sigma_{\wh \bS^{n-1}})\|_{L^\infty (\R^n)} &\lesssim |Q|^ {- \frac 1 2} \|1_{Q} g\|_{L^1(d\sigma_{\wh \bS^{n-1}})} \\ &\lesssim |Q|^ {- \frac 1 2} \|1_{Q} \|_{L^{2}(d\sigma_{\wh \bS^{n-1}})} \|g\|_{L^{2}(d\sigma_{\wh \bS^{n-1}})}\\ &\lesssim \|g\|_{L^{2}(d\sigma_{\wh \bS^{n-1}})}\, .\end{aligned}$$
The proof of the various refined Tomas-Stein estimates  are different from that of\refeq{inegrefined}.  While  the proof of \refeq{inegrefined} results from a combination of   Cavalieri principle with the classical method of decomposition  of  functions into low
and high frequencies (see for instance Theorem 1.43 in\ccite{BCD1}),  that of the refined estimates of  Tomas-Stein inequality relies rather on   bilinear restriction results. 
In particular,  the proof of \refeq{refined} relies   on a deep bilinear restriction
theorem of Tao\ccite{Tao2}.  Recall that such bilinear estimates  lead to   a better gain of regularity  for the product of two solutions of   the Schr\"odinger equation in~\eqref{eq_Schro}
compared with the product laws and the gain of one half derivative (with respect to the Sobolev embedding)   with purely Strichartz methods.

\smallskip On the other hand, along the same lines as in the proof of the profile decomposition\refeq{profdec} where \eqref{inegrefined} plays a key role,   the refined estimate \eqref{refined} involves  in  a sort of "Knapp" profile   decomposition, that  have been introduced  by  M. Christ and S. Shao\ccite{CS}. Roughly speaking, the "Knapp" algorithm    decomposition implemented in\ccite{CS} is built on the fact that if a bounded sequence~$(f_k)_{\in \N}$ of~$L^{2}(\wh \bS^{n-1})$ do not concentrate around   the north pole (up the invariance of \eqref{refinedbis}), then (see Lemma 5.2 in\ccite{Sabin}) $$\sup_{Q \in D} |Q|^ {- \frac 1 2} \|\cF^{-1} (1_{Q} f_k d\sigma_{\wh \bS^{n-1}})\|_{L^\infty (\R^n)} \stackrel{k\to\infty}\longrightarrow 0\, .$$    
Let us also underline   that similarly to the case of  the Sobolev embedding \eqref{inegsobolev},  the refinements of Tomas-Stein inequalities as well as  the "Knapp" profile   decompositions come to play in the study of best constants and extremisers involved in this setting, which is not completely settled in
its general form and remains a topical issue. Indeed, works addressing the existence of extremisers for Tomas-Stein inequalities tend to be a tour de force, using a variety of tools and technics,  such as   the  refined estimates,   bilinear estimates, and the concentration compactness arguments mentioned above. 
For an overview about this topic, see for instance the recent paper Foschi-Oliveira e Silva\ccite{Foschi}.

\subsubsection{Other frameworks}  We have seen in Section \ref{Muller} that in the  particular case of  the Heisenberg group~$\mathbb{H}^{d}$, an estimate of type\refeq{eq:estimesphere} can only hold for $p=1$, which is the trivial index given by Riemann-Lebesgue theorem!  However,   M\"uller\ccite{Muller}   established an appropriate  restriction estimate  involved anisotropic Lebesgue spaces.
 This prompts us to generalise the question of Stein in the following way: given a Lie group $G$, for which  norm~$N_G$ on $G$, an inequality of the form
\begin{equation}
\label{eq:estime0}
 \|\cF_G( f)|_{\wh S_G}\|_{L^2 (d \sigma_{\wh S_G})}\leq C N_G(f)\, ,
\end{equation}
holds for all $f$ in  the Schwartz space $\cS(G)$, where  $\cF_G (f)$ denotes the Fourier transform of~$f$ on the group $G$, and $\wh S_G$ stands for the dual unit sphere. 
 The definition of the dual unit sphere $\wh S_G$ and its operator-valued measure $\sigma_{\wh S_G}$ is open to interpretation. 
In the case of the Heisenberg groups, the dual unit sphere and its operator-valued measure are defined in spectral terms using the relation of the spectral decomposition of the canonical sub-Laplacian and the group-Fourier transform  as in Section~\ref{Muller}.
 This is more generally the case on  Carnot groups  (see Section \ref{subsubsec_technicalDef} for a definition) as they are equipped with a canonical sub-Laplacian.

\medskip 

Let us end this section  by stressing that numerous works related to restriction problems and its applications  have been carried out in the framework of hyperbolic geometry, especially the hyperpolic space (one model of which is the hyperbola as in Section \ref{subsubsec_exGhomdom}). Regarding restriction theorems, one can consult the recent work of \cite{BDV},  and the references therein.       For cluster estimates, one can see  the  latter paper of  J.-P. Anker, P. Germain and T. L\'eger\ccite{AGL}, while  for Strichartz estimates,   one can look for instance to the studies by   V. Banica\ccite{Banica} as well as that by  J.-P. Anker   and V. Pierfelice\ccite{AP}   concerning the Schr\"odinger equation. Note finally that the issue of extremals for Gagliardo-Niremberg inequalities in hyperbolic geometry has been investigated by   M.  Mukherjee in\ccite{MM}, whereas, as far as we know,  the study  of extremals of restriction's estimates in hyperbolic geometry is still an open question.

%%%%%%%%%%%%%%%%%%%%%%%%%%%%%%%%%%%%%%%%%%%%%%%%%%%%%%%%

\subsection{Discrete framework}\label{des-case}
  In the discrete case, we have to deal with exponential sums, such as for instance
\begin{equation}\label{eq:gendis} f_A (x)= \sum_{a \in A} c_a e^{2i\pi a \cdot x} \, ,\end{equation} 
 with $A$ a subset of $\Z^n$.  Evaluating $f_A$  in $L^p ([0, 1]^n)$ is   a challenging issue: the involved 
terms have various phases in the complex plane, and it is difficult to tell what happens when we add
them all up.

\subsubsection{The first $L^2$-restriction theorem on the flat Torus} \label{zyg}  To our knowledge, the first restriction result  on the flat Torus dates back to the   seventies with the result of A. Zygmund\ccite{zygmund} concerning the two dimensional case. 
\begin{theorem} [\cite{zygmund}] 
 \label{thm_1restperiodic}
There exists a positive constant $C$ such that,  for any regular function $f$ on $Q:=[0, 1]^2$ and any $r>0$, we have 
 \begin{equation}\label{n=2Torus}   \big(\sum_{|\mu| =r} |c_\mu|^2 \big)^{\frac 1 2} \leq C  \|f\|_{L^{\frac 4 3}(Q)}\, ,\end{equation}
where, for $\mu= (m, n) \in \Z^2$,  $c_\mu$ denotes the Fourier coefficient of $f$ given by  $$c_\mu:=\int_Q e^{-2i \pi \mu\cdot x} f(x) dx\, .$$ 
 \end{theorem}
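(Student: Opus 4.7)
The plan is to exploit the $TT^{*}$ duality, in the spirit of the Euclidean restriction theorem recalled in Section~\ref{subsec_pfthm_1rest}, and to reduce the estimate to a purely combinatorial statement about the circle $\{\mu \in \mathbb{Z}^{2}: |\mu|=r\}$. Denote by $R$ the restriction operator $f \mapsto (c_{\mu})_{|\mu|=r}$ mapping (formally) $L^{4/3}(Q) \to \ell^{2}(\{|\mu|=r\})$. Its formal adjoint is
\[
R^{*}g(x) = \sum_{|\mu|=r} g_{\mu}\, e^{2i\pi \mu\cdot x}, \qquad g \in \ell^{2}(\{|\mu|=r\}).
\]
By the $TT^{*}$ argument, \eqref{n=2Torus} is equivalent to the bound $\|R^{*}g\|_{L^{4}(Q)} \leq C \|g\|_{\ell^{2}}$, and this is what I would aim to establish.

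To prove the latter, I would square and expand: writing $|R^{*}g|^{2} = R^{*}g \cdot \overline{R^{*}g}$, Parseval's identity on the torus gives
\[
\|R^{*}g\|_{L^{4}(Q)}^{4} = \big\|(R^{*}g)^{2}\big\|_{L^{2}(Q)}^{2} = \sum_{\lambda \in \mathbb{Z}^{2}} \Big| \sum_{\substack{|\mu|=|\nu|=r \\ \mu+\nu=\lambda}} g_{\mu}\, g_{\nu} \Big|^{2}.
\]
The next (and crucial) step is a combinatorial count: given $\lambda \in \mathbb{Z}^{2}$, the constraint $\mu+\nu=\lambda$ together with $|\mu|=|\nu|=r$ forces $\mu$ to satisfy simultaneously $|\mu|^{2}=r^{2}$ and $\lambda\cdot\mu = |\lambda|^{2}/2$ (obtained by expanding $|\lambda-\mu|^{2}=r^{2}$). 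This is the intersection of a circle with a line, hence admits \emph{at most two} real, and in particular integer, solutions. Thus the inner sum has at most two terms, and Cauchy--Schwarz yields
\[
\Big| \sum_{\substack{|\mu|=|\nu|=r \\ \mu+\nu=\lambda}} g_{\mu}g_{\nu} \Big|^{2} \leq 2 \sum_{\substack{|\mu|=|\nu|=r \\ \mu+\nu=\lambda}} |g_{\mu}|^{2}|g_{\nu}|^{2}.
\]
Summing over $\lambda \in \mathbb{Z}^{2}$ lets the constraint $\mu+\nu=\lambda$ disappear, leaving
\[
\|R^{*}g\|_{L^{4}(Q)}^{4} \leq 2 \sum_{|\mu|=r}\sum_{|\nu|=r} |g_{\mu}|^{2}|g_{\nu}|^{2} = 2 \|g\|_{\ell^{2}}^{4},
\]
which is the desired bound, with $C = 2^{1/4}$.

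The main conceptual point, and in some sense the only non-trivial step, is the combinatorial observation that on the circle $|\mu|=r$ in $\mathbb{Z}^{2}$ the ``additive energy" is essentially as small as possible: each $\lambda$ has at most two representations $\lambda = \mu + \nu$ with $\mu,\nu$ on that circle. In higher dimensions the analogous statement is false (there are many representations, related to the Gauss circle problem), which is why Zygmund's argument is specific to $n=2$ and why the corresponding restriction theorem for $\mathbb{T}^{n}$, $n\geq 3$, is a genuinely harder problem connected to decoupling and the Bourgain--Demeter theorem (cf. the discussion of Vinogradov in Section~\ref{Vinogradov}). All other ingredients---duality, Parseval on the torus, Cauchy--Schwarz---are completely standard.
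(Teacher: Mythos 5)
Your overall route---dualizing via $TT^{*}$ to the extension estimate $\|R^{*}g\|_{L^{4}(Q)}\lesssim\|g\|_{\ell^{2}}$ and then applying Parseval together with a counting argument---is essentially the paper's argument (the paper dualizes by hand, testing against a normalized sequence $\gamma_\mu$ and using H\"older, then Parseval). However, there is a genuine gap in your counting step: the claim that the system $\mu+\nu=\lambda$, $|\mu|=|\nu|=r$ has at most two solutions fails for $\lambda=0$. For $\lambda=0$ the linear condition $\lambda\cdot\mu=|\lambda|^{2}/2$, obtained by expanding $|\lambda-\mu|^{2}=r^{2}$, degenerates to the identity $0=0$ and imposes no constraint: every antipodal pair $(\mu,-\mu)$ with $|\mu|=r$ solves the system, and the number of such pairs equals the number of lattice points on the circle, which is unbounded in $r$ (for instance $r^{2}=5^{k}$ gives $4(k+1)$ points). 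Hence the Cauchy--Schwarz step with the factor $2$ is not valid for the $\lambda=0$ term, and the constant $C=2^{1/4}$ you announce is not established by the argument as written.

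The gap is easily repaired: treat $\lambda=0$ separately, estimating $\big|\sum_{|\mu|=r}g_{\mu}g_{-\mu}\big|\leq\|g\|_{\ell^{2}}^{2}$ directly by Cauchy--Schwarz (the map $\mu\mapsto-\mu$ preserves the circle), and keep your two-point count for $\lambda\neq0$; this gives $\|R^{*}g\|_{L^{4}}^{4}\leq 3\|g\|_{\ell^{2}}^{4}$ and the theorem with $C=3^{1/4}$. Note that the paper sidesteps this issue by expanding $|F|^{2}=F\overline{F}$ rather than $F^{2}$: the resulting convolution runs over differences $\rho=\mu-\nu$, the degenerate frequency is $\rho=0$, and its coefficient is exactly $\sum_{\mu}|\gamma_{\mu}|^{2}=1$, so it is controlled trivially, while for $\rho\neq0$ the same at-most-two intersection count applies. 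Your ``sums'' variant is equivalent in spirit, but its degenerate frequency ($\lambda=0$, the antipodal pairs) genuinely requires this separate one-line treatment.
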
 \begin{proof}[Sketch of the proof of Theorem \ref{thm_1restperiodic}] Assume that $\widehat S_r$  the subset  of points $\mu=(m,n)$ in the lattice $\Z^2$ satisfying $|\mu| =r$, namely $m^2 +n^2 =r^2$, is non empty, and write 
\begin{eqnarray*}\big(\sum_{|\mu| =r} |c_\mu|^2 \big)^{\frac 1 2} &= &\sum_{|\mu| =r} c_\mu \gamma_\mu \\ &=& \int_Q f(x) \big[\sum_{|\mu| =r} \gamma_\mu e^{-2i \pi \mu\cdot x}\big]  dx \, ,\end{eqnarray*}
 with   $\ds \gamma_\mu:= \frac {\overline c_\mu}{\|c_\mu\|_{\ell^{2}(\widehat S_r)}}\cdot$ 
 
 \smallskip Applying H\"older's inequality, and then Parseval's formula, we deduce that 
 \begin{eqnarray*}\big(\sum_{|\mu| =r} |c_\mu|^2 \big)^{\frac 1 2} &\leq  & \|f\|_{L^{\frac 4 3}(Q)} \|\sum_{|\mu| =r} \gamma_\mu e^{-2i \pi \mu\cdot x}\|_{L^{4}(Q)}  \\ &=& \|f\|_{L^{\frac 4 3}(Q)} \|\Gamma_\rho \|^2_{\ell^{2}(\Lambda)}  \, ,\end{eqnarray*} 
 where $\Lambda$ denotes the lattice of points $\rho \in \Z^2$ so that $\rho=\mu- \nu$, with $|\mu| = |\nu|  = r$  and  \begin{equation}
\label{eq:rho}\Gamma_\rho:=\sum_{ \mu- \nu=\rho  \atop  |\mu| = |\nu|  = r}   \gamma_\mu {\overline \gamma_\nu} \, .\end{equation}
By definition  $\Gamma_0=\|\gamma_\mu\|^2_{\ell^{2}(\widehat S_r)}=1$ while, for $\rho  \neq 0$ (belonging to the lattice $\Lambda$) $\Gamma_\rho$ includes at most two terms, which easily (according to the fact that  $\|\gamma_\mu\|_{\ell^{2}(\widehat S_r)}=1$) allows us  to conclude the proof of the result. \end{proof}
\smallbreak

The proof of Zygmund's theorem highlights the connexion between the restriction problem in the discrete case with Number Theory.  

\smallskip

%{\color{green} We observe that Theorem \ref{thm_1restperiodic} may be rephrased in terms of the orthogonal projection $\Pi_{r^2}$  of $L^2(Q)$
%onto  the space of Fourier coefficients with $|\mu|=r$:
%\begin{equation}
%	\label{eq_Zyg_ref}
%	\|\Pi_{r^2}\|_{\sL(L^{\frac 43}(Q),L^2(Q))}\leq C.
%\end{equation}
%In fact, $\Pi_{r^2}$ is the orthonogal projection onto the eigenspace for the rescaled Laplace operator $-(2\pi)^{-2}\Delta$ for the eigenvalue $r^2$.
%Comparing this with \eqref{eq_estPimu} for $n=2$ and $p=\frac 43 > p_{TS} = \frac 65$, the exponent is $\gamma_{p,2}/2 = \frac 1{16} >0$
%%which is greater than the exponent $0$ in \eqref{eq_Zyg_ref}.}
%{\color{red}  Note also that  Zygmund's result in Theorem \ref{thm_1restperiodic} brings a better\footnote{V. je ne comprends pas ce que tu veux dire ici.} result for $p=p_{TS}$ than the cluster estimates in Theorem \ref{thm_cluster}.} 
  
\medskip

\subsubsection{Schr\"odinger equations on flat tori}
The Schr\"odinger equation makes sense on any Riemannian manifold, and in  particular on  flat tori. However,  understanding the analogue of Strichartz estimates \eqref{eq:strlambda00} on compact  manifolds $ (M, g)$ is
extremely difficult. The first examples where sharp Strichartz estimates were proved concern the case of $S^1$ by J. Bourgain\ccite{bourgain3}  and $S^3$ by N. Burq, P. G\'erard and N. Tzvetkov\ccite{bgt}.  Let us discuss briefly the main arguments for the above theorems. 

\smallskip In the case of the flat Torus~$\T^{n}=\R^{n}/\Z^{n}$, using the Fourier series theory, one can write any solution of the Schr\"odinger equation under  the form:
\begin{equation}\label{Torus1} u(t,x)= \sum_{k\in \Z^n} a_k e^{2i\pi (k\cdot x+ |k|^2t)} \, ,\end{equation}
where $(a_k)_{k \in \Z^n} $ denote  the Fourier coefficients   of the Cauchy data  $u_0 \in L^2(\T^{n})$, and then of course~$\|u_{0}\|_{L^{2}(\T^{n})}=\|a_k\|_{\ell^2(\Z^{n})}$.  

\smallskip In the one dimensional  case,   J. Bourgain in\ccite{bourgain3}  established the following sharp Strichartz estimate 
\begin{equation}\label{Torus-st} \|\sum_{n \in Z} a_n e^{2i\pi (n  x+ n^2t)}\|_{L^4(\T^{2})}  \lesssim  \|a_n\|_{\ell^2(\Z)} \, .\end{equation}
Its    proof is   in the same vein as the proof of the estimate \eqref{n=2Torus}  by A. Zygmund\ccite{zygmund} 
outlined in Section \ref{zyg}:  setting~$f (t,x) =\sum_{k \in Z} a_k e^{2i\pi (k\cdot x+ k^2t)}$ and observing that $\|f\|^2_{L^4(\T^{2})}=\|f \overline f\|_{L^2(\T^{2})}$, where
$$  (f \overline f) (t, x)= \sum_{k \in Z}  |a_k|^2+ \sum_{k_1 \neq k_2} a_{k_1}  \overline a_{k_2}e^{2i\pi ((k_1-k_2)  x+ (k_1^2-k_2^2)t)}\, ,$$
one can easily infer that 
$$\|f\|^2_{L^4(\T^{2})} \leq 2  \|f\|_{L^2(\T^{2})}\, ,$$
which implies \eqref{Torus-st} by Parseval identity. Here again as for \eqref{n=2Torus},   we are dealing with an easy example of arithmetic structure, and  this allows us to end up easily with the result.

\medskip However, higher dimensions are more challenging: actually  the sum\refeq{Torus1} is large near rational points of the form~$\ds (\frac {p_1}q, \cdots, \frac {p_d}q, \frac {p_t}q)$, and this makes the study of such sums tricky.  Optimal results in this setting have been obtained later on  thanks to the decoupling method introduced by T. Wolff \cite{Wolff}, and developed later on by several authors, in particular by J. Bourgain and C. Demeter\ccite{{bourgain-demeter}}. 

\medskip   On compact Riemannian manifolds $(M, g)$,  a
natural expansion of square integrable functions is provided by  the spectral decomposition
of the associated Laplace-Beltrami operator. 
However, the  knowledge of the spectrum and of
the eigenfunctions of such  operators on arbitrary compact  manifolds $(M, g)$  is too poor
to be able  to adapt the  method applied for   flat Tori. The strategy adopted by the authors in\ccite{bgt} is rather  
 the one described in Section\refer{Strichartz}, which relies on microlocal approximations.    
 
 \medskip Decoupling is a recent method in Fourier analysis that is helpful for estimating $\|f\|_{L^p}$, for $p\neq 2$,  in terms of informations about the Fourier transform $\mathcal{F}( f)$.  One impressive  application of this method involves Strichartz estimates for the Schr\"odinger equation on the flat Torus~$\T^{n}=\R^{n}/\Z^{n}$.  When $u$ has frequencies at most $N$, one has the following result
due to Bourgain-Demeter:
\begin{theorem}[\cite{{bourgain-demeter}}] \label{t:tsbasest}
{\sl If the  Fourier coefficients  $(a_k)_{k \in \Z^n} $  are supported in 
$$
Q_N=\{(k_1,\cdots,k_n)\in \Z^n 
\colon |k_j| \leq N,  \, \,  j=1,\ldots, n \},
$$ 
then the solution of the Schr\"odinger equation on~$\T^{n}$ given by 
$$u(t,x)= \sum_{n\in Q_N} a_n e^{2i\pi (n\cdot x+ |n|^2t)}$$
satisfies
\beq
\label{dec}
\|u\|_{L^{\frac {2(d+1)}  d }(\T^{n} \times [0, 1])} \lesssim_\varepsilon N^\varepsilon \|a_n\|_{\ell^2}\,.
\eeq  
}
\end{theorem}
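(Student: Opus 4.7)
The approach is to derive this estimate from the Bourgain-Demeter $\ell^{2}$-decoupling theorem for the truncated paraboloid, which is the substantive result actually proved in \cite{bourgain-demeter}. The first step is to view $u(t,x)$ as a discrete Fourier extension from the piece of the paraboloid $\{(\xi,|\xi|^{2}) : \xi \in \R^{n}\}$ sitting above $Q_{N}$, with the coefficients $(a_{k})$ playing the role of spectral weights. After the parabolic rescaling $\xi\mapsto N\xi$, $x\mapsto x/N$, $t\mapsto t/N^{2}$, the problem becomes an extension problem from the unit paraboloid $\{(\eta,|\eta|^{2}):\eta\in[-1,1]^{n}\}$ whose Fourier data sit on a lattice of resolution $1/N$, with physical window of size $N$ in space and $N^{2}$ in time.

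Next, I would partition $[-1,1]^{n}$ into caps $\theta$ of sidelength $\delta = N^{-1/2}$. This is the canonical decoupling scale, since the paraboloid deviates from its tangent hyperplane over such a cap by $\delta^{2}=1/N$, precisely matching the dual lattice resolution, so curvature only ``activates'' at this scale. The $\ell^{2}$-decoupling inequality of Bourgain-Demeter, applied at spatial radius $R=\delta^{-2}=N$, then yields, at the critical decoupling exponent $p$ of the statement,
$$
 \|u\|_{L^{p}} \lesssim_{\varepsilon} N^{\varepsilon}\Bigl(\sum_{\theta}\|u_{\theta}\|_{L^{p}}^{2}\Bigr)^{1/2},
$$
where $u_{\theta}$ denotes the piece of $u$ whose Fourier support lies in $\theta$.

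The third step is the local estimate on a single cap. Within one cap of side $\delta$ the paraboloid is essentially planar, so $u_{\theta}$ is (up to admissible Schwartz weights) constant on parallelepipeds dual to $\theta$; a standard Plancherel / uncertainty argument then delivers $\|u_{\theta}\|_{L^{p}}\lesssim \|u_{\theta}\|_{L^{2}}$ after accounting for the $\sim N^{n/2}$ lattice points contained in each cap. Combining this with the orthogonality identity $\sum_{\theta}\|u_{\theta}\|_{L^{2}}^{2} = \|u\|_{L^{2}}^{2} = \|a_{k}\|_{\ell^{2}}^{2}$ closes the argument and produces the desired bound.

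The genuine obstacle is of course the decoupling inequality itself, which constitutes the real mathematical content. Its proof proceeds via a Bourgain-Guth broad-narrow decomposition, combined with the multilinear Kakeya / multilinear restriction theorem of Bennett-Carbery-Tao to treat the broad term, and the whole scheme is threaded through an induction-on-scales bootstrap that iteratively improves any admissible exponent $\alpha$ for which a bound $D(\delta)\leq C_{\alpha}\delta^{-\alpha}$ is known, driving it down to $\alpha=\varepsilon$ for every $\varepsilon>0$. Once decoupling is taken as a black box, the reduction to the discrete Strichartz inequality above is essentially bookkeeping, and the $N^{\varepsilon}$ loss on the right is exactly the residue left by this bootstrap.
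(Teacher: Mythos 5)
Your overall strategy (deduce the discrete Strichartz bound from the continuous $\ell^2$-decoupling theorem for the truncated paraboloid, via parabolic rescaling, periodization, and a single-cap estimate) is exactly the route of Bourgain--Demeter; the paper itself gives no proof of this theorem, it only cites it. However, as written your argument has a genuine quantitative gap in Steps 2--3: the choice of cap scale $\delta=N^{-1/2}$ and ball radius $R=\delta^{-2}=N$ is the wrong matching of scales. After rescaling, the lattice frequencies $k/N$ are $1/N$-separated, so a cap of side $N^{-1/2}$ contains $\sim N^{n/2}$ of them, and decoupling at radius $N$ cannot see their separation inside a cap. The remaining within-cap step is then a ``flat'' estimate, and it is genuinely lossy: the best possible single-cap bound (sharp, by constructive interference of $N^{n/2}$ equal coefficients on the parallelepiped dual to $\theta$, of dimensions $N^{1/2}\times\cdots\times N^{1/2}\times N$) is
$$
\|u_\theta\|_{L^p(w_{B_N})}\ \lesssim\ N^{\frac{n+2}{2p}+\frac n4}\Bigl(\sum_{k/N\in\theta}|a_k|^2\Bigr)^{1/2},
$$
and feeding this into the decoupling inequality, covering the rescaled torus $N\T^n\times N^2\T$ by $\sim N$ balls of radius $N$, and undoing the rescaling yields $\|u\|_{L^p(\T^n\times[0,1])}\lesssim_\varepsilon N^{\varepsilon+\frac n4-\frac n{2p}}\|a\|_{\ell^2}$, i.e.\ a loss of $N^{\frac{n}{2(n+2)}}$ at the critical exponent $p=\frac{2(n+2)}{n}$, not $N^\varepsilon$. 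So the claim in Step 3 that a Plancherel/uncertainty argument closes the estimate ``after accounting for the $N^{n/2}$ lattice points'' cannot be correct as stated; your heuristic that curvature ``activates'' at scale $N^{-1/2}$ identifies the scale at which the periodic problem behaves like the Euclidean one on time intervals of length $1/N$, not the scale at which decoupling must be applied here.

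The correct bookkeeping decouples all the way down to caps of sidelength comparable to the lattice spacing, $\delta\sim 1/N$, so that each cap contains a single frequency; this forces the ball radius $R=\delta^{-2}=N^2$, which is precisely the time period after the parabolic rescaling (this is why the natural time interval in the theorem is $[0,1]$). Then each $u_\theta$ is a single exponential, $\|a_k e_k\|_{L^p(w_{B_{N^2}})}\approx |a_k|\,|B_{N^2}|^{1/p}$, and since $B_{N^2}$ contains $\sim N^n$ full space-time periods, the powers of $N$ cancel exactly ($2(n+1)-n-(n+2)=0$), leaving only the $N^\varepsilon$ from decoupling, for every $2\le p\le \frac{2(n+2)}{n}$. (Incidentally, the exponent $\frac{2(d+1)}{d}$ in the statement should be read as the critical exponent $\frac{2(n+2)}{n}$ for the torus $\T^n$; your Step 1 and the final remarks on the proof of decoupling itself are fine.)
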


\medbreak

 Decoupling is a recent development in Fourier analysis whose basis were developed within the framework of the theory of restriction. In abstract terms, if $\mathcal{F}( f)$ is supported in $\Omega \subset \R^n$, then we decompose $$f=\sum_\theta f_\theta,$$ where $\Omega =\cup \theta$ is a disjoint union of subsets $\theta$, and  $\mathcal{F}( f_\theta)$ is supported in $\theta$, namely 
$$f_\theta(x)= \int_\theta \mathcal{F}( f) (\omega) e^{2i \pi \omega x} d\omega.$$
For each $p$, we define the decoupling constant $D_p:=D_p(\Omega =\cup \theta)$ to be the smallest constant so that, for all $f$ with $\mathcal{F}( f)$  supported in $\Omega$,
$$ \|f\|^2_{L^{p}(\R^{n})} \leq D^2_p \sum_\theta   \|f_\theta \|^2_{L^{p}(\R^{n})} .$$ 
For most applications, we are interested by sets $\theta$ which are partition of a thin neighborhood of a curved manifold, for instance a truncated parabola in Theorem \ref{t:tsbasest}. For an introduction to this method, one can consult the survey of   L. Guth \cite{Guth} and the references therein.

\subsection{Number theory connection}\label{NT}
Concerning  connections  to Number  Theory, Additive Combinatorics and PDEs in discrete setting, one can consult\ccite{DemeterICM, DemeterBourgain, Guth} for an introduction to the decoupling methods whose foundations have grown inside the framework of  restriction theory. 
\subsubsection{Vinogradov's conjecture}\label{Vinogradov} Let us illustrate the interplay between restriction's problem  and  Number  Theory with Vinogradov's conjecture which concerns diophantine systems: 

\smallskip
{\it For fixed positive integers $s, k, N$, let $J_{s, k}(N)$ be the number of integer solutions 
$$ k^j_1+ \cdots + k^j_s= k^j_{s+1}+ \cdots + k^j_{2s}\, ,    $$ for all $1 \leq j \leq k $ with $(k_1,\cdots, k_{2s}) \in  \{1, \cdots, N\}^{2s}$.}  
\smallskip

The Vinogradov's question,  which dates back to the 1930's, focuses on the asymptotics of $J_{s, k}(N)$, for $N$  large enough. 

\smallskip Using analytic number theory, Vinogradov have proved good estimates for $J_{s, k}(N)$, for some values of $k$ and $s$. Later on in 2015, based on Fourier restriction ideas, and in particular  on the decoupling method, J. Bourgain, C. Demeter and L. Guth  \cite{BDG} proved the following  asymptotic sharp bounds (see also the papers of T. Wooley\ccite{Woo1, Woo}  using  the efficient congruencing): for all $\epsilon>0$, \begin{equation}\label{eq:Vin}
J_{s, k}(N) \lesssim_{s, k, \epsilon}      N^\epsilon (N^s + N^{2s - \frac {k (k+1)} 2 })\, ,
\end{equation}  for $N$ sufficiently large. 

\smallskip In particular, one has, in the case when $\ds 1 \leq s \leq \frac {k (k+1)} 2$, 
\begin{equation}\label{eq:Vinbis} J_{s, k}(N) \lesssim_{s, k, \epsilon}      N^{s+\epsilon}  \, .
\end{equation} Observing that there are $N^s$ trivial solutions, those with $(k_1,\cdots, k_{s})=(k_{s+1},\cdots, k_{2s})$, the bound \eqref{eq:Vinbis} shows that when $\ds 1 \leq s \leq \frac {k (k+1)} 2$, there are not too many non trivial solutions.  

\smallskip At first glance, this conjecture  linked to Number Theory seems to be very far from the restriction's problem! This is not the case, since it turns out   that the number $J_{s, k}(N)$ admits the following analytic representation:
\begin{equation}\label{eq:Vindec} J_{s, k}(N) = \int_{[0, 1]^k} |\sum^N_{j=1} e^{2i\pi (jx_1+j^2x_2+\cdots+j^k x_k)}|^{2s} dx_1\cdots dx_k\, ,
\end{equation} which by straightforward computations follows from the obvious fact that 
$$ \int_{[0, 1]} e^{2i\pi m x} dx = \left\{\begin{array} {ccl}
1 &\mbox{if} & m=0\\
0 &\mbox{if} & m \neq 0 \, .
\end{array}\right.
  $$

This formulation  can be seen as a discrete analogue of Tomas-Stein's estimate \eqref{dual2}, where the measures $gd\sigma_{\wh S}$ are sums of exponentials with discrete frequencies located on manifolds such as the parabola when $k=2$ or the momentum curve, for $k\geq 3$, $$\{(t, t^2,\cdots, t^k)\colon\,\, t \in [0, 1]\} \, .$$

\smallskip The  example of Vinogradov's conjecture clearly illustrates the connection between Number and Restriction Theory. We will not comment further on this fact  in this text, but we refer the interested reader  to the survey of   L. Guth \cite{Guth} and the references therein.

\subsubsection{Additive combinatorics}  Nilmanifolds appear in the study of arithmetic progressions and additive combinatorics, especially in works by B. Green and T. Tao, see e.g.\ccite{GreenTao2008,GreenTao2012}.
For instance, the Green-Tao theorem states that the primes contain arbitrarily long arithmetic progressions, and its proof relies on understanding the distribution of primes in `nilsequences,' which are sequences arising from flows on nilmanifolds.

\subsection{Open problems}
As is traditional in Brezis' papers, we conclude this text with   a few open questions related to restriction Stein's problem.

\medskip

 From the above cited research a few inquiries arise: 
 \begin{enumerate}  \item The first one   concerns  hyper-surfaces $\wh S$ of $\wh \bR^n$  whose Gaussian curvature vanishes  on submanifolds of $\wh S$ with codimension $k>1$ (and do not anywhere else).  This is for instance the case of the   revolution Torus in $\wh \bR^3$ which has for  equation  
 $$
 \big(\sqrt{x^2+y^2}-R\big)^2+z^2=r^2,  \,\, 0< r< R \, ,
 $$ whose  Gaussian curvature vanishes on the two circles  at height $z=\pm r$, and is positive anywhere else. One   wonders what should be the corresponding optimal estimate for Stein's problem. 

% \begin{itemize}  \item [-]  the hyper-surface $\wh S$ of $\wh \bR^3$ given by  $$x^2+y^2+z^4=1$$ whose  Gaussian curvature is   null on the  cercle $z=0$, and positive anywhere else; 
% \item [-]  the hyper-surface $\wh S$ of $\wh \bR^3$ given by  $$x^4+y^4+z^4=1$$ whose  Gaussian curvature is   null on the  three equators $x=0$, $y=0$ and~$z=0$, while it is  positive anywhere else;  
% \item [-] the   revolution Torus in $\wh \bR^3$ which has for  equation $$\sqrt{(x^2+y^2}-R)^2+z^2=r^2,  0< r< R \, ,$$ whose  Gaussian curvature vanishes on the two circles  at height $z=\pm r$, and positive anywhere else.
%\end{itemize} 
 \smallskip
 
\item   In Section\refer{Strichartz}, we raised the issue of quasilinear equations which requires more elaborate techniques than the constant coefficient case. In this setting, the heart of the matter consists to investigate approximate solutions under the form: 
$$ \int e^{i \Phi (t, x, \xi)} \sigma (t, x, \xi) d \xi \,,$$ for some phase $\Phi$ and amplitude $\sigma$.  In the articles\ccite{bch, bgt, Tataru} cited above, the involved oscillating integrals are investigated combining microlocal analysis and  stationary phase theorem. It would be helpful to establish restriction theorems, under some hypothesis on the phase $\Phi$, for instance as some  perturbation of the phase involved in\refeq{eq:str0}. Note that   L. Guth, J. Hickman and M. Iliopoulou provide in\ccite{GuthHL} a  restriction  result  for a  parametrix's class.  

 \smallskip 
 
 \item Tomas-Stein estimates   have appropriate analogues   in the framework of hyperbolic geometry.  However, to our knowledge,   refined versions in the spirit of\refeq{refined} have not been tackled. Although  the roadmap of Section\refer{generalisations} is not fully resolved in the Euclidean setting, it would be interesting to refine restriction estimates in the hyperbolic geometry and to investigate adapted concentration compactness tools, with the aim of investigating the involved optimal constants and extremals. 
\smallskip

 \item It is anticipated that cluster estimates for sub-Laplacians will not behave like their Riemannian counterparts  in Theorem~\ref{thm_cluster}, especially when sharp. 
This intuition is based on the difference in propagation properties and Strichartz estimates for nilpotent Lie groups, especially the Heisenberg groups in sections\refer{Restriction-Heisenberg} and\refer{App}.

A concrete  open problem is to find cluster estimates (even non-sharp)  on Heisenberg nilmanifolds for the canonical sub-Laplacians. 
The spectral theory of these sub-Laplacians are completely explicit\ccite{DeningerSinghof}, so cluster estimates in these settings may be viewed as related to analytic number theory. 
Other approaches (e.g. via Harmonic Analysis or Spectral Theory)
may  also be possible. 
\end{enumerate}

\appendix 

 \section{The $TT^*$ argument}
 \label{sec_TT*}
 
 Here, we recall the $TT^*$ argument as presented in 
 \cite[Lemma 2.2]{ginibrevelo}.
 
 \smallskip
  
 If $Y$ and $Z$ are two vector spaces (without any assumption of norms or topology), we denote the space of linear maps $Y\to Z$ by $\sL_a(Y,Z)$.
 
 Let $\cH$ be a Hilbert space.
Let  $X$ a Banach space; the Banach space dual
to $X$  is denoted by  $X^*$.
Let $\cD$ be a vector space densely contained in $X$.

If $A\in \sL_a(\cD, \cH)$, we define its algebraic adjoint
 $A^* \in \sL(\cH, \cD_a^*)$ in the following way: the space  
$\cD_a^* =\sL(\cD,\bC)$ is  the algebraic dual of $\cD$ equipped with the  algebraic pairing  $\langle \cdot , \cdot \rangle_\cD$, and 
 the operator $A^*$ is defined via
$$
\forall f\in \cD,\quad\forall v\in \cH,\qquad  
\langle A^* v, f\rangle_\cD= \langle v, Af\rangle_{\cH},
$$
where the scalar product 
 $\langle \cdot , \cdot \rangle_\cH$ on $\cH$ is 
conjugate linear in the first argument.

\begin{theorem}
	Under the hypotheses above,  the following three conditions
are equivalent:
\begin{enumerate}
	\item There exists $a\in [0,\infty)$ such that 
$$
\forall f\in \cD,
\qquad
\|Af\|_\cH \leq a \| f\|_X
$$
\item The range of $A^*$ is included in $X^*$ and  there exists $a\in [0,\infty)$ such that 
$$
\forall v\in \cH,
\qquad 
\|A^*v\|_{\bX^*}\leq a \| v\|_\cH
$$
\item The range of $A^*A$ is included in $X^*$ and  there exists $a\in [0,\infty)$ such that 
$$
\forall f\in \cD,
\qquad
\|A^*Af\|_{\bX^*}\leq a^2 \| f\|_X.
$$
\end{enumerate}
If one of (all) those conditions is (are) satisfied, the operators $A$ and
$A^* A$ extend by continuity to bounded operators  $X \to \cH$ and  $X\to X^*$ respectively.
In this case, we may take $a\in [0,+\infty)$ to be the same constant  in all three
parts with
$$
a\geq \|A\|_{\sL(X,\cH)} =  \|A^*\|_{\sL(\cH,X^*)} = \sqrt{  \|A^*A\|_{\sL(\cH,X^*)}}.
$$
\end{theorem}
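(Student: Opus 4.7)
The plan is to establish the equivalence by a cycle, using the identity $\|Af\|_\cH^2 = \langle A^*Af,f\rangle_{\cD}$ as the linking device between $A$ and $A^*A$. I would first prove $(1)\Leftrightarrow(2)$ by a clean duality argument, then obtain $(1)\Rightarrow(3)$ by composition, and finally close the cycle with $(3)\Rightarrow(1)$ via the identity above, which is the step where the three constants get tied together.

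For $(1)\Rightarrow(2)$: assume (1). For any $v\in\cH$ and $f\in\cD$, the algebraic definition gives
\begin{equation*}
|\langle A^*v,f\rangle_{\cD}| = |\langle v,Af\rangle_{\cH}| \leq \|v\|_{\cH}\,\|Af\|_{\cH} \leq a\,\|v\|_{\cH}\,\|f\|_X.
\end{equation*}
Hence $f\mapsto \langle A^*v,f\rangle_\cD$ is continuous on $\cD$ for the $X$-norm, and by density of $\cD$ in $X$ it extends uniquely to an element of $X^*$, with norm at most $a\|v\|_\cH$; this is the meaning of ``$A^*v\in X^*$''. For the converse $(2)\Rightarrow(1)$, take $v=Af$ with $f\in\cD$: then
\begin{equation*}
\|Af\|_\cH^2 = \langle Af,Af\rangle_\cH = \langle A^*(Af),f\rangle_\cD \leq \|A^*(Af)\|_{X^*}\,\|f\|_X \leq a\,\|Af\|_\cH\,\|f\|_X,
\end{equation*}
and dividing by $\|Af\|_\cH$ (trivial if it vanishes) gives (1).

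For $(1)\&(2)\Rightarrow(3)$: combine the two bounds: for $f\in\cD$, $Af\in\cH$ and then $A^*(Af)\in X^*$ with
\begin{equation*}
\|A^*Af\|_{X^*} \leq a\,\|Af\|_\cH \leq a^2\,\|f\|_X.
\end{equation*}
For $(3)\Rightarrow(1)$, the same identity is the key: for $f\in\cD$,
\begin{equation*}
\|Af\|_\cH^2 = \langle A^*Af,f\rangle_\cD \leq \|A^*Af\|_{X^*}\,\|f\|_X \leq a^2\,\|f\|_X^2,
\end{equation*}
so $\|Af\|_\cH\leq a\|f\|_X$.

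Finally, once any of the three conditions holds, $A$ (resp.\ $A^*A$) is uniformly continuous on the dense subspace $\cD\subset X$ valued in the complete space $\cH$ (resp.\ $X^*$), hence extends uniquely to a bounded operator on all of $X$. The sharpened form of the constants, namely
\begin{equation*}
\|A\|_{\sL(X,\cH)} = \|A^*\|_{\sL(\cH,X^*)} = \sqrt{\|A^*A\|_{\sL(X,X^*)}},
\end{equation*}
follows by taking the infimum of admissible $a$ in each chain of implications above: the implications $(1)\Rightarrow(2)$ and $(2)\Rightarrow(1)$ preserve the constant, giving the first equality, while the implications $(1)\&(2)\Rightarrow(3)$ and $(3)\Rightarrow(1)$ give the two inequalities $\|A^*A\|\leq\|A\|^2$ and $\|A\|^2\leq\|A^*A\|$ respectively. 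The only subtle point — really the only one worth checking carefully — is that in $(1)\Rightarrow(2)$ the extension of $A^*v$ from $\cD$ to $X$ is well defined and unique; this is a standard consequence of $\cD$ being dense in $X$ and $X^*$ being complete, so no obstacle arises.
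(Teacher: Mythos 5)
Your proof is correct, and it is essentially the standard argument: the paper itself does not reproduce a proof of this lemma (it refers to Lemma 2.2 of Ginibre--Velo), and the argument given there is the same duality cycle you use, with the identity $\|Af\|_\cH^2=\langle A^*Af,f\rangle_\cD$ linking the three conditions and the density of $\cD$ in $X$ giving the unique continuous extensions and the equality of norms. The only cosmetic remark is that in the extension step what matters is the completeness of the scalar field (or of $\cH$, $X^*$ as targets), not of $X^*$ as you phrase it, but this does not affect the argument.
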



\begin{thebibliography}{99}



\bibitem{ABB19}
A. Agrachev, D. Barilari, and U. Boscain,  {\it A Comprehensive Introduction to Sub-Riemannian Geometry}, Cambridge University Press, 2019.

\bibitem{alexo}
 {G. Alexopoulos},
 {Spectral multipliers on {L}ie groups of polynomial growth},
  {\it Proceedings of the American Mathematical Society},
{\bf 120},  pages {973--979},  {1994}.

\bibitem{AGL}  J.-P. Anker, P. Germain and T. L\'eger, Spectral projectors on hyperbolic surfaces, 
\url{arXiv:2306.12827}.

\bibitem{AP}  J.-P. Anker   and V. Pierfelice, Nonlinear Schr\'"odinger equation on real hyperbolic spaces, {\it Annales de l'IHP, Analyse non lin\'eaire}, {\bf 26},  pages 1853--1869, 2009.  

\bibitem{BBG}
{H. Bahouri, D. Barilari and I. Gallagher},
 {Strichartz estimates and {F}ourier restriction theorems on the
              {H}eisenberg group},
 {\it  Journal of Fourier Analysis and Applications},
{\bf 27},
 {2021},
 

\bibitem{bch} H. Bahouri and J.-Y. Chemin,   \'Equations d'ondes quasilin\'eaires
et in\'egalit\'es de Strichartz, {\it American Journal of
Mathematics}, {\bf 121},  pages 1337--1377, 1999.

 \bibitem{BCD1}
H.~Bahouri,  J.-Y. Chemin et R. Danchin,  {\it Fourier analysis and
applications to nonlinear partial differential equations},  
Grundlehren der Mathematischen Wisserchaften, Springer Verlag, {\bf 343}, 2011.

 
\bibitem{bcdh} H. Bahouri, J.-Y. Chemin and R. Danchin,  A frequency  space  for the Heisenberg group,     {\it Annales de l'Institut de Fourier}, {\bf  69}, pages 365--407, 2019. 



\bibitem{bgx}
H. Bahouri, P. G\'erard, and C.-J. Xu,  Espaces de Besov et estimations de Strichartz g\'en\'eralis\'ees sur le groupe de Heisenberg, 
{\it Journal d'Analyse Math\'ematique}, {\bf 82},    pages 93--118, 2000.




\bibitem{bahouri lp} H. Bahouri, The Littlewood-Paley theory: a common thread
of many works in nonlinear analysis,  {\it European Mathematical Society Newsletter},    {\bf  112}, pages 15--23, 2019.

\bibitem{BaPe} H. Bahouri and G. Perelman, Global well-posedness for the derivative nonlinear Schr\"odinger equation,  {\it Inventiones mathematicae},  {\bf 229},  pages  639--688, 2022.


\bibitem{Banica} V. Banica,
The nonlinear Schr\"odinger equation on hyperbolic space,  
{\it Communications in Partial Differential Equations}, 
{\bf 32}, 
pages 1643--1677,  2007.

\bibitem{BF} D.  Barilari and  S.  Flynn,   Refined Strichartz Estimates for sub-Laplacians in Heisenberg and H-type groups,
\url{arXiv:2501.04415}.

\bibitem{Benameur}
J. Ben Ameur,
  Description du d\'efaut de compacit\'e de l'injection de Sobolev sur le groupe de Heisenberg,
{\it  Bulletin de la Soci{\'e}t{\'e} Math{\'e}\-ma\-ti\-que de  Belgique},  {\bf 15},  pages 599--624,  2008.

\bibitem{BK}  M. Ben-Artzi and S.  Klainerman, Decay and regularity for the Schr\"odinger equation, 
{\it Journal d'Analyse Math\'ematique},    {\bf  58} pages 125--37, 1992.  

%\bibitem{Ber13}
%F.~Bernicot  and El.-M. Ouhabaz, Restriction estimates via the derivatives of the heat semigroup and connection with dispersive estimates,
%{\it Mathematical Research Letters}, {\bf 20}, pages 1047-1058, 2013.


%  \bibitem{bourgain1} J.~Bourgain,  A remark on Schr\"odinger operators,  {\it Israel Journal of Mathematics}, {\bf 77}, pages 1-16, 1992.
%  
%    \bibitem{bourgain2} J.~Bourgain,  Refinements of Strichartz inequality and applications to 2D-NLS with critical nonlinearity, {\it International Mathematical Research Notices}, {\bf 5} pages  253-283, 1998.
%  
%    \bibitem{bourgain} J.~Bourgain,  Some new estimates on oscillatory   integrals,  {\it Essays on Fourier Analysis in Honor of Elias M. Stein, Princeton Math}, {\bf 42}, pages 83-112,  1995.
    
  
    
        \bibitem{bourgain3} J.~Bourgain,  Fourier transform restriction phenomena for certain lattice subsets and application to nonlinear evolution equations I,  {\it Geometric and Functional Analysis}, {\bf 3},   pages 107--156, 1993.
        
 \bibitem{bourgain-demeter} J.~Bourgain and C. Demeter,  The proof of the $\ell^2$ decoupling conjecture,  {\it Annals of Math}, {\bf 182},   pages 351--389, 2016.
 
  \bibitem{BDG} J. Bourgain, C. Demeter and L. Guth,  Proof of the main conjecture in Vinogradov's mean value theorem for degrees higher than three,  {\it  Annals  of Mathematics}, {\bf 184},   pages 633--682,  2016.

\bibitem{Bbook}  H. Br\'ezis,  Book Review: An introduction to variational inequalities and their applications,  {\it  Bulletin of the American Mathematical Society},  {\bf 7}, pages 622--627, 1982. 

\bibitem{Brezis}  H. Br\'ezis,  Functional analysis, Sobolev spaces and partial differential equations,  {\it Universitext. Springer, New York}, 2011. 

    \bibitem{BC}  H. Br\'ezis and J.-M. Coron,  Convergence of solutions of H-Systems or how to blow bubbles,
  {\it  Archive for Rational Mechanics
and Analysis},  {\bf 89}, pages 21-86, 1985.


  \bibitem{Bgal} H. Br\'ezis and T  Gallouet, 
Nonlinear Schr\"odinger evolution equations, 
{\it  Nonlinear Analysis},  {\bf 4},   pages  677-681, 1980.

\bibitem{Blieb}  H. Br\'ezis and E. Lieb,  Sobolev inequalities with remainder terms,   {\it  Journal of Functional  Analysis},  {\bf 62}, pages 73-86, 1985.
 
 \bibitem{Bmiro}  H. Br\'ezis and P.  Mironescu, Gagliardo-Nirenberg inequalities and non-inequalities: the full story,   {\it Annales de l'Institut Henri Poincar\'e C, Analyse non lin\'eaire
}, {\bf 35}, pages 1355-1376, 2018. 



\bibitem{bgt} N. Burq,  P. G\'erard and N.  Tzvetkov, 
Strichartz inequalities and the nonlinear Schr\"odinger equation on compact manifolds, {\it American Journal of
Mathematics}, {\bf 126}, pages 569--605, 2004.


  \bibitem{BDV} S. Buschenhenke, D. M\"uller and A. Vargas,  On Fourier restriction for finite-type perturbations of the hyperbolic paraboloid,  {\it Geometric aspects of harmonic analysis}, Springer, 2021.


\bibitem{CZ} A.-P. Calderon and A. Zygmund,  On the existence of certain singular integrals, {\it Acta Mathematica},  {\bf 88},  pages 85-139, 1952.


\bibitem{CC} V. Casarino and P. Ciatti, A restriction theorem for M\'etivier groups, {\it Advances in Mathematics},
{\bf 245}, pages 52-77, 2013.
See also \url{https://arxiv.org/abs/1211.5497v3}.


\bibitem{CalesonSjolin}
 {L. Carleson and P. Sj\"olin},
 {Oscillatory integrals and a multiplier problem for the disc},
  {Studia Math.},
  {\bf 44},
 {1972},
  pages {287--299}.
  
\bibitem{CS} M. Christ and S. Shao, Existence of extremals for a Fourier restriction inequality, {\it Analysis and PDE}, {\bf 2},   pages 261-312, 2012.

\bibitem{coifmanweiss}
 {R. Coifman and G. Weiss},
 {\it Analyse harmonique non-commutative sur certains espaces
              homog\`enes},
 {Lecture Notes in Mathematics},
  {\bf  242},
{\'Etude de certaines int\'egrales singuli\`eres},
{Springer-Verlag, Berlin-New York},
 {1971}.

 \bibitem{Saut} P. Constantin and J-C. Saut, 
 Local smoothing properties of dispersive equations, {\it Journal of the American Mathematical Society}, {\bf
1},  pages 413-439, 1988. 

\bibitem{CNV} D. Cordero-Erausquin, B.  Nazaret, and C. Villani,  A mass-transportation approach to sharp Sobolev and Gagliardo-Nirenberg inequalities,  {\it Advances in Mathematics}, {\bf
182},  pages  307-332, 2004.  

\bibitem{CorwinGreenleaf}
 {L. Corwin and F. Greenleaf},
{\it Representations of nilpotent {L}ie groups and their
              applications. {P}art {I}},
{Cambridge Studies in Advanced Mathematics},
 {\bf 18},
  {Cambridge University Press, Cambridge},
 {1990}.
 
 \bibitem{Davies} R. Davies,  Some remarks on the Kakeya problem,  {\it Mathematical Proceedings of the Cambridge Philosophical Society}, {\bf
69},  pages 417-421, 
1971.


 \bibitem{delPino} M. Del Pino, F.  Mahmoudi and M.  Musso,   Bubbling on boundary submanifolds for the Lin-Ni-Takagi problem at higher critical exponents, {\it Journal of the European Mathematical Society}, {\bf
16},  pages 1687-1748, 2014.

\bibitem{DemeterICM}  C. Demeter,   Decouplings and applications,  {\it  Proceedings of the International Congress of Mathematicians},
{R}io de {J}aneiro 2018. {V}ol. {\bf III} Invited lectures, World Sci. Publ.,  pages 1539-1560, 2018.  


\bibitem{DemeterBourgain} C. Demeter,   Bourgain's work in Fourier restriction,   {\it  Bulletin of the American Mathematical Society}, {\bf
58},  pages 191-204, 2021.  
%\bibitem{Demetercantor} C. Demeter, A decoupling for Cantor-like sets,  {\it  Proceedings of the American Mathematical Society}, {\bf
%147},  pages 1037-1050, 2019.  

\bibitem{DeningerSinghof}
{C. Deninger and W. Singhof},
{The {$e$}-invariant and the spectrum of the {L}aplacian for
              compact nilmanifolds covered by {H}eisenberg groups},
 {\it Inventiones mathematicae},
 {\bf 78},
   pages {101--112},  {1984}.

   
 \bibitem{dixmier}
 {J. Dixmier},
 {\it Les {$C\sp{\ast} $}-alg\`ebres et leurs repr\'esentations},
 {Cahiers Scientifiques [Scientific Reports]},
  {Fasc. XXIX},
  {Deuxi\`eme \'edition},
  {Gauthier-Villars \'Editeur, Paris},
  {1969}.
  
  \bibitem{DtER}
 {N. Dungey, A. ter Elst and D. Robinson},
 {\it Analysis on {L}ie groups with polynomial growth},
 {Progress in Mathematics},
  {\bf 214},
{Birkh\"auser Boston, Inc., Boston, MA},
 {2003}.
 
\bibitem{FK} A.-V. Faminskii and  S.-N. Kruzhkov,  Generalized solutions of the Cauchy problem for the Korteweg-de Vries equation,  {\it Matematicheskii Sbornik},   {\bf  120},  pages 396-425, 1983.

\bibitem{Federer} H. Federer,  {\it  Geometric measure theory. Grundlehren der Mathematischen Wisserchaften}, Springer Verlag, {\bf 153}, 1969.  

  
 



\bibitem{Fefferman1} C. Fefferman, Inequalities for strongly  operators,  {\it Thesis (Ph.D.)}, Princeton University, 1969. 



\bibitem{Fefferman} C. Fefferman, Inequalities for strongly singular convolution operators, {\it  Acta Mathematica}, pages 9-36,  1970.

\bibitem{Fefferman-B} C. Fefferman, The multiplier problem for the ball, {\it  Annals of Mathematics}, pages 330-336,  1971.   

\bibitem{fermfischer2} C.  Fermanian-Kammerer and V. Fischer, Semi-classical analysis on H-type groups,  {\it Journal of Spectral Theory}, {\bf  62},  pages  1057-1086, 2019. 

\bibitem{fermfischer3} C.  Fermanian-Kammerer,  V. Fischer and S. Flynn,  Some remarks on semi-classical analysis on two-step nilmanifolds,  {\it Quantum mathematics}, {\bf  57},  pages  129-162,  Springer, Singapore, 2023. 

\bibitem{fischerJFA15}
 {V. Fischer},
 {Intrinsic pseudo-differential calculi on any compact {L}ie
              group},
  {\it Journal of  Functional  Analysis},
 {\bf 268}, pages {3404-3477}, 2015.

\bibitem{fischerRuzhansky}
 {V. Fischer and M. Ruzhansky},
 {\it Quantization on nilpotent {L}ie groups},
 {Progress in Mathematics},
  {\bf 314},
 {Birkh\"auser/Springer},
 {2016}.


\bibitem{Folland}  G.-B. Folland,  Applications of analysis on nilpotent groups to partial differential equations, {\it Bulletin of the American Mathematical Society},   {\bf 83},   pages  912-930, 1977.

\bibitem{Foschi}  D. Foschi and D. Oliveira e Silva, Some recent progress on sharp Fourier restriction theory, {\it Analysis Mathematica},  {\bf 43},  pages  241-265, 2017.

\bibitem{Sabin}  R.- L. Frank, H. Lieb and  J. Sabin,    Maximizers for the Stein-Tomas Inequality, {\it Geometric and Functional Analysis}, {\bf 26}, pages 1095-1134,  2016.  
  
   % \bibitem{Sabin2}  R.- L. Frank  and  J. Sabin,   Spectral cluster bounds for orthonormal systems and oscillatory integral operators in Schatten spaces, {\it Advances in Mathematics}, {\bf 317}, pages 157-192,  2017.    
    
  %    \bibitem{GT}  C. Gentil and  C.  Tabary, Strichartz estimates for geophysical fluid equations using Fourier restriction theory, {\it Advances in Mathematics}, {\bf 362}, pages 1155-1181,  2024.  
  
  \bibitem{gassot}
L. Gassot,
 On the radially symmetric traveling waves for the Schr\"odinger equation on the Heisenberg group,
{\it  Pure and Applied Analysis}, {\bf 2}, pages 739-794, 2020.
      
    \bibitem{GS}   I.-M. Gel'fand and G.-E. Shilov, {\it Generalized functions.  Properties and operations},  {\bf I}, Translated from the Russian by Eugene Saletan. Academic Press, New York-London,  xviii+423 pages, 1964.
    
    \bibitem{Ge2}
P.~G\'erard, {\it Description du d\'efaut de compacit\'e de
l'injection de Sobolev}, ESAIM Control Optim. Calc. Var. 3 (1998),
213--233 (electronic, URL: http://www.emath.fr/cocv/).

 \bibitem{gerardmeyeroru} P. G{\'e}rard, Y. Meyer and  F. Oru,   In{\'e}galit{\'e}s de Sobolev pr{\'e}cis{\'e}es: {\it SÈminaire Goulaouic-Schwartz (1996-1997)}, Talk No 4,
1996.

\bibitem{Germain}  P. Germain, $L^2$  to $L^p$ bounds for spectral projectors on thin intervals in Riemannian manifolds, 
\url{arXiv:2306.16981}.

\bibitem{ginibrevelo} J. Ginibre and G. Velo, Generalized Strichartz inequalities for the wave equations, {\it  Journal of  Functional Analysis}, {\bf  133}, pages  50--68,    1995.

\bibitem{GreenTao2008}
 {B. Green and T. Tao},
 {The primes contain arbitrarily long arithmetic progressions},
 {\it Annals of Mathematics},
 {\bf 167}, pages {481-547},   {2008}.


\bibitem{GreenTao2012}
 {B. Green and T. Tao},
{The quantitative behaviour of polynomial orbits on
              nilmanifolds},
  {\it Annals of Mathematics},
{\bf 175},
  pages {465--540},  {2012}.
  
\bibitem{guivarch}
 {Y. Guivarc'h},
 {Croissance polynomiale et p\'eriodes des fonctions
              harmoniques},
  {\it Bulletin de la Soci\'et\'e math\'ematique de France},
{\bf 101}, pages {333--379},   {1973}.

\bibitem{Guth} L. Guth,   Decoupling estimates in Fourier analysis,  {\it ICM-International Congress of Mathematicians}, {\bf 2},  Plenary lectures, pages 1054-1089, EMS Press, Berlin, 2023.

\bibitem{GuthHL} L. Guth, J. Hickman and M. Iliopoulou,  
Sharp estimates for oscillatory integral operators via polynomial partitioning, 
{\it Acta Mathematica}, {\bf 223},  pages 251-376, 2019.

 
\bibitem{Hall} B-C. Hall,   {\it   Lie groups, Lie algebras, and representations, An elementary introduction},    Springer International Edition.

\bibitem{HebischSteger}
W. Hebisch and T. Steger, 
Multipliers and singular integrals on exponential growth groups. 
{\it Mathematische Zeitschrift}, {\bf  245}, pages 37-61, 2003. 

\bibitem{Helgason}
{S. Helgason},
{\it Differential geometry, {L}ie groups, and symmetric spaces},
 {Pure and Applied Mathematics},
 \textbf{80},
 {Academic Press},
  {1978}.

%\bibitem{Hor} L. H\"ormander,  {\it  The analysis of linear partial differential equations, I-IV},  Springer Verlag,
%1983-85.

\bibitem{hormander4}
L. H\"ormander,   Hypoelliptic second order differential equations,
{\it  Acta Mathematica}, {\bf 119},  pages 147-171, 1967.

\bibitem{lebeau} O.  Ivanovici,  G.  Lebeau and F. Planchon, Dispersion for the wave equation inside strictly convex domains I: the Friedlander model case,  {\it Annals of Mathematics}, {\bf 180}, pages 323--380, 2014.

\bibitem{JL}
J.~Jendrej and  A.~Lawrie,
 Two-bubble dynamics for threshold solutions to the wave maps equation,
{\it  Inventiones Mathematicae}, {\bf 213}, pages 1249-1325,  2018.

\bibitem{Kato1}
T. Kato,
 On the Cauchy problem for the (generalized) Korteweg-de Vries equation, Studies in applied mathematics,  {\it Advances in Mathematics, Supplementary Studies}, {\bf 8}, pages 93-128,  1983. 
 
\bibitem{Kato}
T. Kato,   Nonlinear Schr\"odinger equations, {\it Lecture Notes in Physics}, {\bf 345},  pages 218--263, 1988.

\bibitem{keeltao} M. Keel and T. Tao,  Endpoint Strichartz estimates, {\it American Journal of  Mathematics},  {\bf 120}, pages
   955--980,  1998.
   
   \bibitem{km}
C. E. Kenig, F. Merle,
  Global well-posedness, scattering and blow-up for the energy critical focusing non-linear wave equation,
{\it  Acta Mathematica}, {\bf 201}, pages  147--212,  2008.

\bibitem{killipvisan}   R. Killip and M. Visan, Nonlinear Schr\"odinger equations at critical regularity,  {\it Evolution
equations},  {\bf 17}, pages   325-437, 2013.
   
   \bibitem{kirillov}
 {A. A. Kirillov},
 {\it Lectures on the orbit method},
  {Graduate Studies in Mathematics},
  {\bf 64},
 {American Mathematical Society, Providence, RI},
  {2004}.
   
 \bibitem{LeDonne}
 E. Le Donne,   
 {\it Lecture notes on sub-Riemannian geometry from the Lie group viewpoint},
 \url{https://cvgmt.sns.it/paper/5339/}
 
    \bibitem{LL}
E. Lenzmann, M. Lewin,
On singularity formation for the $L^2$-critical Boson star equation,
{\it  Nonlinearity}, {\bf 24}, pages 3515-3540, 2011.

   
   \bibitem{Lewin}
M. Lewin, Th\'eorie spectrale \& m\'ecanique quantique, {\it Math\'ematiques et Applications (SMAI)},  Springer International Publishing, 2022.

\bibitem{Lions} P.-L. Lions, The concentration-compactness principle in the calculus of variations. The limit
case. II, {\it Revista Matematica Iberoamericana},  {\bf 1}, pages 45-121, 1985.

\bibitem{LS11}
H.~Liu and Y. Wang, A restriction theorem for the {H}-type groups,
{\it Proceedings of the American Mathematical Society}, {\bf 139}, pages 2713-2720, 2011.

\bibitem{LS16}
H.~Liu and M.~Song, A restriction theorem for {G}rushin operators,
{\it Frontiers of Mathematics in China}, {\bf 11}, pages 365-375, 2016.

\bibitem{Magnin}
 {L. Magnin},
  {Sur les alg\`ebres de {L}ie nilpotentes de dimension {$\leq
              7$}},
  {\it J. Geom. Phys.},
{\bf 3},
  {1986},
 No {1},
 pages {119--144}.
 
 \bibitem{MOS1966}  W. Magnus, F.
Oberhettinger and R.-P. Soni, 
{\it Formulas and theorems for the special functions of
mathematical physics},  3rd enlarged ed, Grundlehren der Mathematischen Wissenschaften, {\bf  52},  Springer,  1965.

\bibitem{Mattila} P. Mattila, 
{\it  Geometry of sets and measures in Euclidean spaces. Fractals and rectifiability}, Cambridge Studies in Advanced Mathematics. Cambridge University Press, {\bf 44}, 1995.

\bibitem{merlevega}
F. Merle, L. Vega, Compactness at blow-up time for $L^2 $ solutions of the critical nonlinear Schr\"odinger equation in 2D,
{\it International Mathematical Research Notices},   pages 399-425, 1998.




\bibitem{mauceri}
 {G. Mauceri},
{Riesz means for the eigenfunction expansions for a class of hypoelliptic differential operators},
 {\it Annales de l'Institut Fourier},
 {\bf 31},
  pages {115--140}, 1981.


 \bibitem{MVV} A. Moyua, A. Vargas and L. Vega, Restriction theorems and maximal operators related to oscillatory integrals in $\R^3$, {\it Duke Mathematical Journal}, {\bf 96}, pages 547-574, 1999.  
 
 \bibitem{MM}  M.  Mukherjee, Extremal values of the (fractional) Weinstein functional on the hyperbolic space,  {\it Forum Mathematicum}, {\bf 4}, pages 959-970, 2016.   
 
 \bibitem{MullerBR}  D. M\"uller,     
{On {R}iesz means of eigenfunction expansions for the
              {K}ohn-{L}aplacian},
 {\it Journal f\"ur die reine und angewandte Mathematik},
 {\bf 401},
  pages {113-121},    {1989}.
  
      
 \bibitem{Muller}  D. M\"uller,  A restriction theorem for the Heisenberg group, {\it Annals of Mathematics},  {\bf  131}, pages 567-587,  1990.
 




\bibitem{OS}   D. Oliveira e Silva, Extremizers for Fourier restriction inequalities: convex arcs, {\it Journal d'Analyse Math\'ematique}, {\bf 124}, pages 337-385, 2014.    
 
 
 \bibitem{PeterWeyl}
 {F. Peter and H. Weyl},
  {Die {V}ollst\"andigkeit der primitiven {D}arstellungen einer
              geschlossenen kontinuierlichen {G}ruppe},
  {\it Mathematische Annalen},
 {\bf 97},
    pages {737--755},   {1927}.

\bibitem{Puk} 
 {L. Puk\'anszky},
 {\it Le\c cons sur les repr\'esentations des groupes},
 {Monographies de la Soci\'et\'e{} Math\'ematique de France},
   \textbf{2},
 {Dunod, Paris},
  {1967}.
  


 \bibitem{RobLN}  L. Robbiano,  Effets r\'egularisants pour l'\'equation de Schr\"odinger. (French) [Smoothing effects for the Schr\"odinger equation], {\it PDE's, dispersion, scattering theory and control theory, Soci\'et\'e Math\'ematique de  France, Paris}, {\bf 30}, pages 1-44,  2017.  
 

  \bibitem{RLS} L.-P. Rothschild and  E.-M. Stein, 
Hypoelliptic differential operators and nilpotent groups, 
{\it Acta Mathematica}, {\bf 137},   pages  247-320, 1976. 
 
 \bibitem{RudinFA}
 {W. Rudin},
 {\it Functional analysis},
  {International Series in Pure and Applied Mathematics},
   {Second edition},
 {McGraw-Hill, Inc., New York},
 {1991}.

 \bibitem{shaoC}
C. Shao, 
{Toolbox of Para-differential Calculus on Compact Lie Groups}, 
{2023},
ArXiv:2310.06806,
\url{https://arxiv.org/abs/2310.06806}.
 
 \bibitem{Shao}  S. Shao, On existence of extremizers for the Tomas-Stein inequality for $S^1$,   {\it Journal of Functional Analysis}, {\bf 270}, pages 3996-4038, 2016.  
 
 \bibitem{Sogge1988T}  C.-D. Sogge, Oscillatory integrals and spherical harmonics,   {\it Thesis (Ph.D.)}, Princeton University,  1985. 

 \bibitem{Sogge1988}
C.-D. Sogge, Concerning the $\ell^p$ norm of spectral clusters for second-order elliptic
  operators on compact manifolds, 
 {\it  Journal of Functional Analysis}, {\bf  77},  pages 123-138, 1988.

\bibitem{Sogge2017}
C.-D. Sogge.
 {\it  {F}ourier Integrals in Classical Analysis}, 
Cambridge University Press, second edition edition, 2017.

\bibitem{SZ} C.-D. Sogge and S. Zelditch,  Riemannian manifolds with maximal eigenfunction growth,   {\it  Duke Mathematical
Journal},  {\bf  114},  pages 387-432, 2002.


\bibitem{steinSIO}
E.-M. Stein,
{\it Singular integrals and differentiability properties of
              functions},
{Princeton Mathematical Series},
 {\bf 30},
{Princeton University Press},
 {1970}.
 
 \bibitem{steinTopics}
 E.-M. Stein,
{\it Topics in harmonic analysis related to the
              {L}ittlewood-{P}aley theory},
{Annals of Mathematics Studies},
 {\bf 63},
{Princeton University Press, Princeton, NJ; University of Tokyo Press},
    {1970}.
    
\bibitem{stein} E.-M. Stein,  {\it Oscillatory Integrals in Fourier Analysis. Beijing Lectures in Harmonic Analysis}, 
Princeton University,  1986.

\bibitem{steinknapp} E.-M. Stein,  {\it Harmonic Analysis}, Princeton University Press, 1993.

\bibitem{steinweiss} 
E.-M. Stein and G. Weiss,  {\it Introduction to Fourier analysis on Euclidean spaces},  Princeton Mathematical Series, {\bf 32},  Princeton University,   1971.


\bibitem{street}  B. Street, What else about ... hypoellipticity? {\it  Notices of the American Mathematical Society}, {\bf 65}, pages 424-425, 2018. 


\bibitem{strichartz}
R. Strichartz, Restriction Fourier transform of quadratic surfaces
and decay of solutions of the wave equations, {\it Duke
Mathematical Journal}, {\bf 44}, pages 705-714,   1977.

 
 \bibitem{Tao} T. Tao,  Some recent progress  on the restriction conjecture,  {\it Fourier Analysis and Convexity},
Applied and Numerical Harmonic Analysis,  Birkh\"auser Boston, pages 217-243, 2004.

 \bibitem{Tao2} T. Tao,   A sharp bilinear restrictions estimate for paraboloids,  {\it Geometric and Functional Analysis},
{\bf 13}, pages 1359-1384, 2003.  

\bibitem{Tataru}  D. Tataru,   Strichartz estimates for operators with nonsmooth coefficients and the nonlinear wave equation,  {\it American Journal of
Mathematics}, {\bf 122}, pages 349-376,   2000. 

% \bibitem{taylor1}
%M.-E. Taylor,  {\it  Noncommutative Harmonic Analysis, Mathematical
%survey and monographs},  American Mathematical Society Providence
%RI, {\bf 22}, 1986.

 
 \bibitem{Tomas} P.- A. Tomas, A restriction theorem for the Fourier transform,  {\it Bulletin of the American Mathematical Society},  {\bf  81}, pages 477-478,  1975. 
 
 \bibitem{varad}
 {V. S. Varadarajan},
 {\it Lie groups, {L}ie algebras, and their representations},
 {Graduate Texts in Mathematics},
 \textbf{102},
 {Reprint of the 1974 edition},
  {Springer-Verlag, New York},
  {1984}.
 
 \bibitem{WZ} H. Wang and J. Zahl, Volume estimates for unions of convex sets, and the Kakeya set conjecture in three dimensions, \url{arXiv:2502.1765}. 
 
 \bibitem{Wolff} T. Wolff,
{\it Lectures on harmonic analysis}, 
University Lecture Series, 
{\bf 29}, 
edited by  {\L aba, Izabella and Shubin, Carol},
  {With a foreword by Charles Fefferman and a preface by Izabella
              \L aba}, American Mathematical Society, Providence, RI,
 {2003}.
 
 \bibitem{Woo1}  T. Wooley, The cubic case of the main conjecture in Vinogradov's mean value theorem,  {\it  Advances in Mathematics},  {\bf  294}, pages 
532-561, 2016.
 
 
 \bibitem{Woo} T. Wooley, Nested efficient congruencing and relatives of Vinogradov's mean value theorem,  {\it  Proceedings of the London Mathematical Society},  {\bf  118}, pages 942-1016, 
2019.
 

\bibitem{zygmund} A. Zygmund,  On Fourier coefficients and transforms of functions of two variables. {\it Studia Mathematica}, {\bf 50}, 
189-201, 1974.
 \end{thebibliography}
\end{document}